 \newtheorem{thm}{Theorem}[section]
 \newtheorem{cor}[thm]{Corollary}
 \newtheorem{lem}[thm]{Lemma}
 \theoremstyle{definition}
 \theoremstyle{remark}
 \newtheorem{rem}[thm]{Remark}
 \numberwithin{equation}{section}
\begin{document}

%
%
%
%
%
%
%
%
%

\title[Lions Problem for Quasi-Banach Spaces]
 {Using the Baire Category Theorem to \\ Explore Lions Problem for \\ Quasi-Banach Spaces.}


\author[A.~G.~Aksoy]{A.~G.~Aksoy}

\address{%
Department of mathematics\\
Claremont McKenna College\\
Claremont, CA, 91711, USA.}

\email{aaksoy@cmc.edu}

\author{J. M. Almira}
\address{Depto. Ingenier\'{\i}a y Tecnolog\'{\i}a de Computadores,  Universidad de Murcia. \\
 30100 Murcia, SPAIN\\}
\email{jmalmira@um.es}
\subjclass{Primary 46B70; Secondary 46B28}

\keywords{Real interpolation, K-functional, s-numbers, operator ideals defined by approximation numbers.}

\date{October 23, 2024}
\dedicatory{Dedicated to the memory of Prof. A. Pietsch, the master of us all.}

\begin{abstract}
Many results for Banach spaces also hold for quasi-Banach spaces. One important such example is results depending on the Baire Category Theorem (BCT). We use the BCT to explore Lions problem for a quasi-Banach couple $(A_0, A_1)$. Lions problem, posed in 1960's, is to prove that different parameters $(\theta,p)$ produce different interpolation spaces $(A_0, A_1)_{\theta, p}$. We first establish conditions on $A_0$ and $A_1$ so that interpolation spaces of this couple are strictly intermediate spaces  between $A_0+A_1$ and $A_0\cap A_1$.  This result, together with a reiteration theorem, gives a partial solution to Lions problem for quasi-Banach couples. We then apply our interpolation result  to (partially) answer a question posed by Pietsch.  More precisely, we show that if $p\neq p^*$ the operator ideals $\mathcal{L}^{(a)}_{p,q}(X,Y)$, $\mathcal{L}^{(a)}_{p^*,q^*}(X,Y)$ generated by approximation numbers are distinct. Moreover, for any fixed $p$, either all operator ideals $\mathcal{L}^{(a)}_{p,q}(X,Y)$ collapse into a unique space or they are pairwise distinct.   We cite counterexamples which show that 
using interpolation spaces is not appropriate to solve Pietsch's problem for operator ideals based on general $s$-numbers. However, the BCT can be used to prove a lethargy result for arbitrary $s$-numbers which guarantees that, under very minimal conditions on $X,Y$,  the space $\mathcal{L}^{(s)}_{p,q}(X,Y)$ is strictly embedded into $\mathcal{L}^{\mathcal{A}}(X,Y)$. 

\end{abstract}

\maketitle

\section{Introduction}

In the following we give a brief review of notation, terminology and the background relevant to this paper. If $M$ and $N$ are non-negative quantities, we write $M\lesssim N$ if there is a constant $c>0$ independent of the parameters involved in $M$ and $N$ such that $M \leq c \, N$. The symbol $\asymp$ stand for asymptotic equivalence, and $M\asymp N$ means that $M\lesssim N$ and $N\lesssim M$. We say $(A_0,A_1)$ is a quasi-Banach couple if $A_0$ and $A_1$ are quasi-Banach spaces both continuously embedded in some Hausdorff topological vector space. Peetre's K-functional of a quasi-Banach couple $(A_0,A_1)$ is defined by
$$K(x,t,A_0,A_1):=\inf \{||a_0||_{A_0}+t||a_1||_{A_1}:\,\, x=a_0+a_1, \,\, a_j\in A_j, \,\,  j=0,1 \}$$ 
for $t>0$ and $x\in A_0+A_1$. Note that the quasi-norm on the intersection $A_0 \cap A_1$ is 
$||x||_{A_0\cap A_1} :=\max \{||x||_{A_0}, ||x||_{A_1}\}\asymp ||x||_{A_0}+||x||_{A_1}$  while on the sum is given by  $||x||_{A_0+A_1} =  K(x,1,A_0,A_1)$.

For $0 < \theta < 1$ and $0 < p \leq \infty$ (or for $\theta = 0, 1$ with $p = \infty$), one defines the \textit{real interpolation space} as follows: 
\[(A_{0}, A_{1})_{\theta, p} := \left\{x \in A_{0} + A_{1} : \quad t^{-\theta} K(x, t) \in L_{p} \left([0, \infty), \frac{dt}{t}\right) \right \} \] with the quasi-norm 
$||f||_{(A_{0}, A_{1} )_{\theta, p}}: =  \begin{cases} \left( \int_{0}^{\infty} \left(t^{-\theta} K(x, t) \right)^{p} \frac{dt}{t} \right)^{\frac{1}{p}}, \quad  0 < p < \infty \\ \sup_{0 <t < \infty} t^{-\theta} K(x, t), \quad\quad   p = \infty. \end{cases}$\\

If $(A_0,A_1)$ is a quasi-Banach couple and $A$ is  a subset of $A_0+A_1$, then we will also use the following notation:
$$K(A,t,A_0,A_1)=\sup_{a\in A} K(a,t,A_0,A_1).$$ 
As usual, $S(X)$ denotes the unit sphere of the quasi-Banach space $X$. In this paper, the function  $K(S(A_0+A_1),t,A_0,A_1)$ will be important.

When dealing with quasi-Banach spaces $X$, some limitations appear. Concretely, Hahn-Banach theorem does not hold, and duality becomes intractable,  and because of the weakness of the triangle inequality convexity arguments do not work well. However Aoki-Rolewicz theorem offers an alternative since it defines an equivalent quasi-norm $|\, .\, |$ on $X$ for which $|x+y|^p \leq |x|^p+|y|^p$ for certain $0<p<1$ \cite{DvL}. Moreover, many results of Banach spaces still hold true, specially those depending on Baire Category Theorem (BCT).

A well known question due to J.L. Lions is how to prove that a given family of interpolation spaces strictly depend on their parameters. This question was answered by Janson, Nilsson, Peetre and Zafran in \cite{JNPZ} for the real method. They showed that  for any given Banach couple $(A_0,A_1)$, if $1\leq p,q \leq \infty$,  $0<\theta,  \eta < 1$  and $A_0\cap A_1 $ is not closed in $A_0+A_1$, then  $$(A_0,A_1)_{\theta,p}=(A_0,A_1)_{\eta,q}\quad\mbox{implies that}\quad \theta=\eta, \,\,\mbox{and}\,\, p=q.$$ 
A different approach to the Lions problem in the case of ordered quasi-Banach couples was given in \cite{AF} where the authors characterized ordered couples $(A_0,A_1)$ whose associated K-functional slowly decays to zero. This result was utilized to show that some natural embeddings between real interpolation spaces are strict, and they got, for the first time in quasi-Banach setting but under the additional restriction of assuming ordered couples, a positive solution to all cases of Lions problem except one. Later, in \cite{CK} (see also \cite{CK1}), Cobos, Cwikel, and K\"{u}hn, solved, with more sophisticated tools,  the general case of Lions problem in quasi-Banach setting, under the additional hypothesis that $(A_0,A_1)$ is a Gagliardo couple, and they used their result to demonstrate that the operator ideals $\mathcal{L}_{p,q}^{(a)}(X,Y)$ are pairwise distinct for arbitrary Banach spaces $X,Y$.

In section \ref{secdos} of this paper we partially extend the results of \cite{AF} to arbitrary quasi-Banach couples. In section 3 we observe that the results of section \ref{secdos} can be applied to the study of the ideals of operators based on approximation numbers, with no necessity of checking that  $(\mathcal{L}_{p_0,p_0}^{(a)}(X,Y),\mathcal{L}_{p_1,p_1}^{(a)}(X,Y))_{\theta,q}$  is a Gagliardo couple. We also observe that, if $X$ has no nontrivial type and $Y$ has no nontrivial cotype, then the spaces $\mathcal{L}_{p,q}^{(c)}(X,Y)$, where $c_n(T)$ is the sequence of Gelfand numbers, are pairwise distinct. We also explain that using interpolation spaces is not adequate to study the analogous of Lions problem for general operator ideals $\mathcal{L}_{p,q}^{(s)}(X,Y)$ where $s_n(T)$ denotes an $s$-numbers sequence. Then we use BCT to prove a lethargy result for general $s$-numbers and we apply it in several concrete cases. The paper ends with an appendix where some results about the interpolation spaces defined by the sum $A_0+A_1$ and the intersection $A_0\cap A_1$  are proven for quasi-Banach couples $(A_0,A_1)$.
\section{The real method of interpolation in Quasi-Banach setting} \label{secdos}

In their seminal book \cite{BK} Brudnyi and Krugljak studied the real method of interpolation in a very general form, introducing the concept of parameter space for the $K$-method.  Given the measure space $\Omega=(\mathbb{R}^+,\mu)$, whose measure is given by $\mu(A)=\int_{A}\frac{dt}{t}$ and integration is taken in Lebesgue sense,  a quasi-Banach lattice $\Phi$ over $\Omega$ is named a parameter for the $K$-method (a parameter, in all what follows) if $\min(1,t)\in \Phi$. Given a parameter $\Phi$, and a couple $(A_0,A_1)$ of quasi-Banach spaces, they proved (see \cite[Proposition 3.3.23]{BK}) that $$(A_0,A_1)_{\Phi}=\{a\in A_0+A_1:\|a\|_{(A_0,A_1)_{\Phi}}= \|K(a,t,A_0,A_1)\|_{\Phi}<\infty\}$$ is a quasi-Banach space which is an interpolation space for the couple $(A_0,A_1)$. 
Another proof of the completeness of the space $(A_0, A_1)_{\Phi}$ follows from  \cite[Theorem 1.1]{Mtype} and the ideas developed in \cite[Theorem 3.12]{AL}.  
Moreover, in \cite{BK} the authors proved the following reiteration formula:
\[
((A_0,A_1)_{\Phi_0},(A_0,A_1)_{\Phi_1})_{\Phi}=(A_0,A_1)_{\Sigma},\quad \text{ where } \Sigma=(\hat{\Phi}_0,\hat{\Phi}_1)_{\Phi}.
\]
Here $\hat{\Phi}=(L_{\infty}^0,L_{\infty}^1)_{\Phi}$, where (see \cite[p. 298]{BK}): $$L_{p}^{\alpha}:=L_{p}(t^{-\alpha})(\Omega)=\{f:(0,\infty)\to \mathbb{R}: \|t^{-\alpha}f(t)\|_{L_{p}(\Omega)}<\infty\}, \quad  0<p\leq \infty.$$
They also proved that there exists a discrete version of the spaces $(A_0,A_1)_{\Phi}$. Concretely, for any $q>1$, and any parameter $\Phi$ such that $\Phi\cap \text{\rm{Conv}}\neq\emptyset$, the space
\[
(A_0,A_1)_{\Phi^{(q)}}=\{a\in A_0+A_1: \|\sum_{n\in\mathbb{Z}} K(a,q^n,A_0,A_1)\chi_{[q^n,q^{n+1}]}(t)\|_{\Phi}<\infty\} 
\]
coincides (with equivalent quasi-norms) with the space $(A_0,A_1)_\Phi$. In particular, $(A_0,A_1)_{\Phi^{(q)}}$ is quasi-Banach. Here, $\text{\rm{Conv}}$ denotes set of continuous concave functions $f:(0,\infty)\to [0,\infty)$.  

Thus, if we set $q=2$ and define the sequence space $$S:=S(\Phi)=\{(\alpha_n):\|(\alpha_n)\|_S=\|\sum_{n\in\mathbb{Z}} |\alpha_n|\chi_{[2^n,2^{n+1}]}(t)\|_{\Phi}<\infty\},$$ then 
$$(A_0,A_1)_\Phi= (A_0,A_1)_{S:K}:=\{a\in A_0+A_1: (K(a,2^n,A_0,A_1))\in S\},$$
\[
\|a\|_{(A_0,A_1)_{S:K}}=\|(K(a,2^n,A_0,A_1))\|_S.
\]

These interpolation spaces based on the use of sequence spaces had already been introduced in the literature by by Nilsson in \cite{N}. Given a compatible couple of quasi-Banach spaces $(A_0,A_1)$   and a sequence space $E$ which is a lattice on $\mathbb{Z}$ (this means that there exists $M>0$ such that $\|(\alpha_n)\|_E\leq M \|(\beta_n)\|_E$ whenever $|\alpha_n|\leq |\beta_n|$ for all $n\in\mathbb{Z}$ and $\|(\beta_n)\|_{E}<\infty$), we define $$ (A_0,A_1)_{E:K}=\{ a\in A_0+A_1:\,\,|| (K(2^{\nu},a, A_0,A_1))_{\nu\in \mathbb{Z}}||_E < \infty\}.$$

In \cite{N}, Nilsson computed the $K$-functional for a given pair $\left((A_0,A_1)_{E_0:K}, \right.$ $\left.(A_0,A_1)_{E_1:K}\right)$, where $(E_0,E_1)$ is a couple of interpolation spaces of the couple $(\ell^{\infty}(\mathbb{Z}),\ell^{\infty}((2^{-n})_{n\in\mathbb{Z}}))$ (which is the discrete version of the couple $(L_{\infty}^0,L_{\infty}^1)$), and proved that
 $$K\left(t,a,(A_0,A_1)_{E_0:K},(A_0,A_1)_{E_1:K}\right)\asymp K\left(t, (K(2^{\nu},a, A_0,A_1))_{\nu},  E_0,E_1\right).  $$
 This implies the reiteration theorem:
 $$ \left( (A_0,A_1)_{E_0:K}, (A_0,A_1)_{E_1:K}\right)_{E:K}= (A_0,A_1)_{(E_0,E_1)_{{E:K}}:K}.$$ The classical reiteration theorem of $(A_0,A_1)_{\theta,p}$ (see \cite{BL}, p.51) can be obtained as a special case.

The first two results we present below are about characterization of slowly decaying K-functionals for general quasi-Banach couples $(A_0,A_1)$.

\begin{thm} \label{fundamental}
Assume that $(A_0,A_1)$ is a couple of quasi-Banach spaces, and $A_0$ and $A_1$ are both $p$-normed spaces. Then either $K(S(A_0+A_1),t,A_0,A_1)\geq 1$ for all $t>0$, or $A_0+A_1=A_1$. 
\end{thm}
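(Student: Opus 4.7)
The plan is to prove the contrapositive: if there exists some $t_0 > 0$ with $K(S(A_0+A_1), t_0, A_0, A_1) < 1$, then $A_0 \hookrightarrow A_1$ continuously, whence $A_0+A_1 = A_1$. First I would note that $K(x,\cdot)$ is non-decreasing and $K(x,1) = \|x\|_{A_0+A_1} = 1$ for $x \in S(A_0+A_1)$, so $K(S(A_0+A_1), t) \geq 1$ automatically for $t \geq 1$. Hence any violating $t_0$ must lie in $(0,1)$. Set $c := K(S(A_0+A_1), t_0) < 1$; by homogeneity of the $K$-functional, this yields $K(x, t_0, A_0, A_1) \leq c \|x\|_{A_0+A_1}$ for every $x \in A_0+A_1$.

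Next, for a fixed but arbitrary $a \in A_0$ and $\epsilon > 0$ with $\rho := c+\epsilon < 1$, I would iterate the approximate decomposition provided by $K$. Put $a^{(0)} := a$; at each step $k \geq 1$ choose $u_k \in A_0$ and $v_k \in A_1$ with $a^{(k-1)} = u_k + v_k$ and $\|u_k\|_{A_0} + t_0\|v_k\|_{A_1} \leq K(a^{(k-1)}, t_0) + \epsilon \rho^{k-1}\|a\|_{A_0}$, and set $a^{(k)} := u_k$. Using $K(a^{(k-1)}, t_0) \leq c\|a^{(k-1)}\|_{A_0+A_1} \leq c\|a^{(k-1)}\|_{A_0}$, an induction yields $\|a^{(k)}\|_{A_0} \leq \rho^k \|a\|_{A_0}$ and $\|v_k\|_{A_1} \leq t_0^{-1}\rho^k\|a\|_{A_0}$. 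Telescoping gives $a = a^{(n)} + \sum_{k=1}^n v_k$ with $\|a^{(n)}\|_{A_0}\to 0$.

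The final step invokes the $p$-Banach structure of $A_1$: the estimate $\sum_k \|v_k\|_{A_1}^p \leq t_0^{-p}\rho^p/(1-\rho^p)\|a\|_{A_0}^p$ guarantees convergence of $\sum_k v_k$ to some $y \in A_1$ with $\|y\|_{A_1} \lesssim \|a\|_{A_0}$. Since $a^{(n)} \to 0$ in $A_0$ and $\sum_{k\leq n} v_k \to y$ in $A_1$, both convergences hold in the ambient Hausdorff topological vector space, so $a = y \in A_1$ with controlled norm; this proves $A_0 \hookrightarrow A_1$ continuously and therefore $A_0+A_1 = A_1$. The main obstacle is that $K(x,t_0) \leq c\|x\|_{A_0+A_1}$ only furnishes decompositions up to an $\epsilon$-error, so the slack $\delta_k \sim \rho^{k-1}\|a\|_{A_0}$ must be chosen finely enough for the geometric decay of $\|a^{(k)}\|_{A_0}$ to survive the induction, and the $p$-Banach completeness of $A_1$ is precisely what turns the bound on $\sum_k \|v_k\|_{A_1}^p$ into an honest limit in $A_1$ rather than merely in $A_0+A_1$.
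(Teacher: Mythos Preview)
Your proposal is correct and follows essentially the same iterative decomposition argument as the paper: both proofs use the bound $K(x,t_0)<\rho\|x\|_{A_0+A_1}$ to peel off $A_1$-pieces whose $p$-th powers are summable, invoke the $p$-Banach completeness of $A_1$ to sum them, and identify the limit in the ambient Hausdorff space. The only cosmetic difference is that the paper argues by contradiction starting from an element $x\in (A_0+A_1)\setminus A_1$, whereas you establish the continuous embedding $A_0\hookrightarrow A_1$ directly (and note in addition that necessarily $t_0<1$).
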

\begin{proof}
 Assume, on the contrary, that  $K(S(A_0+A_1),t_0,A_0,A_1)=c<1$ and $A_0+A_1\neq A_1$. Take $\rho\in ]0,1[$ such that $c<\rho^{1/p}<1$. Then $$K(\frac{x}{\|x\|_{A_0+A_1}},t_0,A_0,A_1)<\rho^{1/p}$$ for every $x\in A_0+A_1$, $x\neq 0$. Hence every element $x$ of $A_0+A_1$ which is different from zero satisfies $$K(x,t_0,A_0,A_1)<\rho^{1/p}\|x\|_{A_0+A_1},$$
which implies that $x=a_0+b_0$ for certain $a_0\in A_0$ and $b_0\in A_1$ such that  
$\|a_0\|_{A_0}+t_0\|b_0\|_{A_1}<\rho^{1/p}\|x\|_{A_0+A_1}$.  Thus we have that 
\begin{equation} \label{argumento}
\left\{\begin{array}{llllll} x =a_0+b_0 \text{ with } a_0\in A_0, \text{ and } b_0\in A_1 \\
\|a_0\|_{A_0}< \rho^{1/p}\|x\|_{A_0+A_1} \\
\|b_0\|_{A_1}< t_0^{-1}\rho^{1/p}\|x\|_{A_0+A_1} \\
\end{array} \right.
\end{equation}
Let us take $x\in (A_0+A_1)\setminus A_1$ and apply \eqref{argumento} to this concrete element. We can repeat the argument just applying it to $a_0$ (if $a_0=0$ then $x=b_0\in A_1$, which contradicts our assumption). Hence, there are elements $a_1\in A_0$ and $b_1\in A_1$ such that 
\begin{eqnarray*} 
\left\{\begin{array}{llllll} a_0 =a_1+b_1 \text{ with } a_1\in A_0, \text{ and } b_1\in A_1 \\
\|a_1\|_{A_0}< \rho^{1/p}\|a_0\|_{A_0+A_1} \leq \rho^{1/p}\|a_0\|_{A_0}<(\rho^{1/p})^2\|x\|_{A_0+A_1} \\
\|b_1\|_{A_1}< t_0^{-1}\rho^{1/p}\|a_0\|_{A_0+A_1}\leq  t_0^{-1}\rho^{1/p}\|a_0\|_{A_0}< t_0^{-1} (\rho^{1/p})^2\|x\|_{A_0+A_1} \\
\end{array} \right.
\end{eqnarray*}
Moreover, $x=a_0+b_0=a_1+b_1+b_0$. Again $a_1\neq 0$ since $x\not\in A_1$. We can repeat the argument $m$ times to get a decomposition $x=a_m+b_m+\cdots+b_0$ with $a_m\in A_0$, $a_m\neq 0$, $b_k\in A_1$ for all $0\leq k\leq m$ and 
\begin{equation*} 
\left\{\begin{array}{llllll} \|a_m\|_{A_0}< (\rho^{1/p})^{m+1}\|x\|_{A_0+A_1} \\
\|b_k\|_{A_1}< t_0^{-1}(\rho^{1/p})^{k+1}\|x\|_{A_0+A_1} \text{ for all } 0\leq k\leq m .\\
\end{array} \right. 
\end{equation*}
Let us set $z_m=x-a_m=b_0+\cdots+b_m$. Then 
\[
\|x-z_m\|_{A_0+A_1}\leq \|x-z_m\|_{A_0}=\|a_m\|_{A_0}<(\rho^{1/p})^{m+1}\|x\|_{A_0+A_1} \to 0 \text{ for } m\to\infty.
\]
and $x$ is the limit of $z_m$ in the norm of $A_0+A_1$.
On the other hand, if $n>m$, then 
\[
\|z_n-z_m\|_{A_1}^p=\|b_{m+1}+\cdots+b_n\|_{A_1}^p\leq \sum_{k=m+1}^n \|b_k\|_{A_1}^p \leq t_0^{-p}\|x\|_{A_0+A_1}^p\sum_{k=m+1}^n \rho^{k+1},
\]
which converges to $0$ for $n,m\to\infty$. Hence $\{z_m\}$ is a Cauchy sequence in $A_1$ and its limit, $w$, belongs to $A_1$ since $A_1$ is topologically complete. On the other hand, $\|w-z_m\|_{A_0+A_1}\leq \|w-z_m\|_{A_1}\to 0$. This implies $x=w\in A_1$, which contradicts our assumptions. Thus, we have demonstrated, for any couple $(A_0,A_1)$ of $p$-normed quasi-Banach spaces satisfying $A_1\hookrightarrow A_0+A_1$,  that  if $K(S(A_0+A_1),t_0,A_0,A_1)<1$ for a certain $t_0>0$, then $A_0+A_1=A_1$. In particular, if $A_0+A_1\neq A_1$ then $K(S(A_0+A_1),t,A_0,A_1)=1$ for  $0<t\leq 1$ and $K(S(A_0+A_1),t,A_0,A_1)\geq 1$ for $t>1$. 
\end{proof}
\begin{rem}
    Note that $A_0+A_1\neq A_1$ is just a reformulation of $A_1 \hookrightarrow  A_0+A_1$, with strict inclusion. 
\end{rem}
\begin{thm}\label{uno1} Let $(A_0,A_1)$ be a couple of quasi-Banach spaces.
The following are equivalent claims:
\begin{itemize}
\item[$(a)$]  $K(S(A_0+A_1),t,A_0,A_1)>c$ for all  $t>0$ and a certain constant $c>0$.
\item[$(b)$] For every non-increasing sequences $\{\varepsilon_n\},\{t_n\}\in c_0$, there are elements $x\in A_0+A_1$ such that 
\[
K(x,t_n,A_0,A_1)\neq \mathbf{O}(\varepsilon_n)
\]
\end{itemize}
\end{thm}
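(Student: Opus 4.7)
My plan is to treat the two implications separately; the substantial direction is $(a)\Rightarrow(b)$, which is a lethargy-type statement and will use the Baire Category Theorem applied to the quasi-Banach space $A_0+A_1$, while $(b)\Rightarrow(a)$ is essentially immediate from the monotonicity and positive homogeneity of the $K$-functional.

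For $(a)\Rightarrow(b)$ I would argue by contradiction. Assume non-increasing $\{t_n\},\{\varepsilon_n\}\in c_0$ for which every $x\in A_0+A_1$ satisfies $K(x,t_n,A_0,A_1)=\mathbf{O}(\varepsilon_n)$, and set
\[
F_N:=\bigcap_{n\in\mathbb{N}}\bigl\{x\in A_0+A_1 : K(x,t_n,A_0,A_1)\leq N\varepsilon_n\bigr\}.
\]
For each fixed $t>0$ the functional $x\mapsto K(x,t,A_0,A_1)$ is a quasi-norm equivalent to $\|\cdot\|_{A_0+A_1}$, since starting from $\|x\|_{A_0+A_1}=K(x,1,A_0,A_1)$ a direct argument yields $\min(1,t)\|x\|_{A_0+A_1}\leq K(x,t,A_0,A_1)\leq \max(1,t)\|x\|_{A_0+A_1}$. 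Hence each $F_N$ is closed and the hypothesis gives $A_0+A_1=\bigcup_{N\geq 1}F_N$. By the BCT there exist $N$, $x_0\in F_N$, and $r>0$ with $B(x_0,r)\subset F_N$. The decomposition $y=(x_0+y)+(-x_0)$ together with the quasi-triangle inequality of $K(\cdot,t_n,A_0,A_1)$, whose constant $C$ is controlled by the quasi-triangle constants of $A_0$ and $A_1$, then yields
\[
K(y,t_n,A_0,A_1)\leq C\bigl(K(x_0+y,t_n,A_0,A_1)+K(x_0,t_n,A_0,A_1)\bigr)\leq 2CN\varepsilon_n
\]
for every $y$ with $\|y\|_{A_0+A_1}<r$. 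Rescaling by positive homogeneity gives $K(S(A_0+A_1),t_n,A_0,A_1)\leq (4CN/r)\,\varepsilon_n\to 0$, contradicting $(a)$.

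For $(b)\Rightarrow(a)$ I would argue by contrapositive. If $(a)$ fails then $\inf_{t>0}K(S(A_0+A_1),t,A_0,A_1)=0$, and since $K(x,t,A_0,A_1)$ is non-decreasing in $t$, so is the supremum over the unit sphere; hence $K(S(A_0+A_1),t,A_0,A_1)\to 0$ as $t\to 0^+$. Pick any non-increasing $t_n\to 0$ and define $\varepsilon_n:=K(S(A_0+A_1),t_n,A_0,A_1)$, which is non-increasing and lies in $c_0$. By positive homogeneity, for every $x\in A_0+A_1$ one has $K(x,t_n,A_0,A_1)\leq \|x\|_{A_0+A_1}\,\varepsilon_n$, so $K(x,t_n,A_0,A_1)=\mathbf{O}(\varepsilon_n)$ for every such $x$, contradicting $(b)$.

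The principal technical obstacle is the BCT step in the first implication, where the absence of a genuine triangle inequality forces one to carry the quasi-triangle constant of $K(\cdot,t,A_0,A_1)$ through the absorption estimate and the rescaling. Once the equivalence of $K(\cdot,t,A_0,A_1)$ with $\|\cdot\|_{A_0+A_1}$ (which secures closedness of the $F_N$) and its quasi-triangle inequality (which secures the absorption step) are in place, the remainder of the argument is soft.
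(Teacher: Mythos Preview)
Your proof is correct and follows the same route as the paper: a Baire category argument for $(a)\Rightarrow(b)$ and the direct contrapositive for $(b)\Rightarrow(a)$. Your absorption step---writing $y=(x_0+y)+(-x_0)$ and invoking the quasi-triangle inequality of $K(\cdot,t_n)$ directly---is a mild streamlining of the paper's version, which first proves $\mathbf{conv}(\Gamma_m)\subseteq\Gamma_{Cm}$ and then uses the midpoint identity $x=\tfrac12\bigl((x+x_0)+(x-x_0)\bigr)$ to reach the same conclusion.
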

\begin{proof} $(a)\Rightarrow (b)$.  Let us assume, on the contrary, that $K(x,t_n,A_0,A_1)= \mathbf{O}(\varepsilon_n)$ for all $x\in A_0+A_1$ and certain sequences $\{\varepsilon_n\},\{t_n\}\in c_0$. This can be reformulated as $$A_0+A_1=\bigcup_{m=1}^{\infty} \Gamma_m,$$ where $$\Gamma_m=\{x\in A_0+A_1: K(x,t_n,A_0,A_1)\leq m\varepsilon_n \text{ for all }n\in\mathbb{N}\}.$$
Now, $\Gamma_m$ is a closed subset of $A_0+A_1$ for all $m$ and the Baire category theorem implies that $\Gamma_{m_0}$ has nonempty interior for some $m_0\in\mathbb{N}$. On the other hand, $\Gamma_m=-\Gamma_m$ since $K(x,t,A_0,A_1)=K(-x,t,A_0,A_1)$ for all $x\in A_0+A_1$ and $t> 0$. Furthermore, if $C>1$ is a quasi-norm constant valid for both spaces $A_0$ and $A_1$, then
\[
\textbf{conv}(\Gamma_m)\subseteq \Gamma_{Cm},
\]
since, if $x,y\in \Gamma_m$ and $\lambda\in [0,1]$, then 
\begin{eqnarray*}
&\ & K(\lambda x +(1-\lambda)y ,t_n,A_0,A_1) \\ 
&\ & \quad \quad \leq  C(K(\lambda x ,t_n,A_0,A_1)+ K( (1-\lambda)y ,t_n,A_0,A_1))\\
&\ & \quad \quad =   C(\lambda  K(x ,t_n,A_0,A_1)+(1-\lambda) K( y ,t_n,A_0,A_1)) \\
&\ & \quad \quad \leq    C(\lambda  m\varepsilon_n+(1-\lambda) m\varepsilon_n) \\
&\ & \quad \quad = C m\varepsilon_n 
\end{eqnarray*} 
Thus, if $B_{A_0+A_1}(x_0,r)=\{x\in A_0+A_1:\|x_0-x\|_{A_0+A_1}<r\}\subseteq \Gamma_{m_{0}}$, then $\textbf{conv}(B_{A_0+A_1}(x_0,r)\cup B_{A_0+A_1}(-x_0,r))\subseteq \Gamma_{Cm_0}$. In particular, 
\[
\frac{1}{2}(B_{A_0+A_1}(x_0,r)+B_{A_0+A_1}(-x_0,r))\subseteq \Gamma_{Cm_0}.
\]
Now, it is clear that 
\[
B_{A_0+A_1}(0,r)\subseteq \frac{1}{2}(B_{A_0+A_1}(x_0,r)+B_{A_0+A_1}(-x_0,r)),
\]
since, if $x\in B_{A_0+A_1}(0,r)$, then $\|-x_0+(x+x_0)\|_{A_0+A_1}=\|x_0+(x-x_0)\|_{A_0+A_1}=\|x\|_{A_0+A_1}\leq r$, so that  $x+x_0\in B_{A_0+A_1}(x_0,r)$, $x-x_0\in B_{A_0+A_1}(-x_0,r)$, and $x=\frac{1}{2}((x-x_0)+(x+x_0))$. Thus, 
\[
B_{A_0+A_1}(0,r)\subseteq  \Gamma_{Cm_0}.
\]
This means that, if $x\in (A_0+A_1)\setminus \{0\}$, then 
\[
K(\frac{rx}{\|x\|},t_n,A_0,A_1)\leq Cm_0\varepsilon_n \text{ for all } n=1,2,\cdots .
\]
Hence 
\[
K(x,t_n,A_0,A_1)\leq \frac{\|x\|}{r}Cm_0\varepsilon_n \text{ for all } n=1,2,\cdots .
\]
On the other hand, $(a)$ implies that, for each $n$, there is $x_n\in S(A_0+A_1)$ such that $$K(x_n,t_n,A_0,A_1)>c,$$ so that 
\[
c< K(x_n,t_n,A_0,A_1)\leq  \frac{1}{r}Cm_0\varepsilon_n \text{ for all } n=1,2,\cdots,
\] 
which is impossible, since $\varepsilon_n$ converges to $0$ and $c>0$. This proves $(a)\Rightarrow (b)$. 

Let us demonstrate the other implication. Assume that $(a)$ does not hold. Then there are non-increasing sequences $\{t_n\}, \{c_n\}\in c_0$ such that 
\[
K(S(A_0+A_1),t_n,A_0,A_1)\leq c_n \text{ for all } n\in\mathbb{N}
\]
In particular, if $x\in A_0+A_1$ is not the null vector, then
\[
K(\frac{x}{\|x\|_{A_0+A_1}},t_n,A_0,A_1)\leq K(S(A_0+A_1),t_n,A_0,A_1)\leq c_n \text{ for all } n\in\mathbb{N},
\]
so that 
\[
K(x,t_n,A_0,A_1)=\|x\|_{A_0+A_1} K(\frac{x}{\|x\|_{A_0+A_1}},t_n,A_0,A_1)\leq \|x\|_{A_0+A_1} c_n 
\]
for all $n\in\mathbb{N}$, and $K(x,t_n,A_0,A_1)=\mathbf{O}(c_n)$ for all $x\in A_0+A_1$. This proves $(b)\Rightarrow (a)$. 
\end{proof}

For a quasi-Banach couple $(A_0,A_1)$, our first step is to investigate the conditions on $A_0$ and $A_1$ so that the interpolation spaces of the couple $(A_0,A_1)$ are strictly intermediate spaces between $A_0+A_1$ and $A_0\cap A_1$.


\begin{thm}\label{strict0}
Let $(A_0,A_1)$ be a couple of quasi-Banach spaces and  $E$ be a $\mathbb{Z}$-lattice such that $\{\min(1,2^n)\}_{n\in\mathbb{Z}}\in E$. Then
$$ A_0\cap A_1 \hookrightarrow (A_0,A_1)_{E:K} \hookrightarrow A_0+A_1.$$
Moreover:
\begin{itemize}
\item[$(a)$] Assume that $A_0\neq A_0+A_1$ or $A_1\neq A_0+A_1$. Then 
$$(A_0,A_1)_{E:K} \neq A_0+A_1.$$
\item[$(b)$] Assume that $A_0\cap A_1$ is not closed in $A_0+A_1$. If $\Phi^E\in \ell^{\infty}(2^{\theta n})$ for certain $\theta\in ]0,1[$,  where 
$\Phi^E=\{\|\mathbf{e}_n\|_E\}_{n\in\mathbb{Z}}$, $\mathbf{e}_{k}=\{\delta_{k,n}\}_{n\in\mathbb{Z}}$, $k\in\mathbb{Z}$, then  
$$(A_0,A_1)_{E:K} \neq A_0\cap A_1.$$
\end{itemize}
\end{thm}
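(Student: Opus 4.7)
My plan is to verify the two continuous embeddings by direct lattice estimates on the $K$-functional, and then prove parts $(a)$ and $(b)$ by contradiction arguments. The embedding $A_0\cap A_1\hookrightarrow (A_0,A_1)_{E:K}$ follows from $K(x,2^n,A_0,A_1)\le \|x\|_{A_0\cap A_1}\min(1,2^n)$ together with the hypothesis $\{\min(1,2^n)\}_n\in E$ and the lattice property of $E$. The embedding $(A_0,A_1)_{E:K}\hookrightarrow A_0+A_1$ uses that $\mathbf{e}_0\in E$ (since $\mathbf{e}_0$ is pointwise dominated by $\{\min(1,2^n)\}_n$), so by the lattice property $\|x\|_{A_0+A_1}\|\mathbf{e}_0\|_E = K(x,1)\|\mathbf{e}_0\|_E\lesssim \|(K(x,2^n))_n\|_E$.

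For $(a)$, I would assume for contradiction that $(A_0,A_1)_{E:K}=A_0+A_1$. The sets $\Gamma_m=\{x\in A_0+A_1:\|(K(x,2^n))_n\|_E\le m\}$ are balanced and closed in $A_0+A_1$ (closedness uses continuity of $K(\cdot,t)$ in the $A_0+A_1$-quasinorm together with Fatou for the lattice $E$), and they cover $A_0+A_1$. The Baire argument combined with the quasi-convex absorption used in the proof of Theorem \ref{uno1} then yields $C>0$ with $\|(K(x,2^n))_n\|_E\le C\|x\|_{A_0+A_1}$ for all $x$. After invoking Aoki-Rolewicz to replace $A_0,A_1$ by equivalent $p$-normed structures, and using the symmetry $K(\cdot,t,A_0,A_1)=tK(\cdot,1/t,A_1,A_0)$ to reduce to the case $A_1\neq A_0+A_1$, Theorem \ref{fundamental} supplies for each $n\ge 0$ an element $x_n\in S(A_0+A_1)$ with $K(x_n,2^n)\ge 1/2$. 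The contradiction is then obtained by a careful $p$-convex aggregation of the $x_n$ into a single element $x\in A_0+A_1$ whose $K$-functional at $2^n$ inherits (via reverse quasi-triangularity) a lower bound of the form $c|\lambda_n|$ for coefficients $\{\lambda_n\}$ chosen $p$-summable on the $A_0+A_1$ side but with $\{\lambda_n\}_n\notin E$.

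For $(b)$, non-closedness of $A_0\cap A_1$ in $A_0+A_1$ provides a point $y$ in the $A_0+A_1$-closure of $A_0\cap A_1$ with $y\notin A_0\cap A_1$, together with approximants $\{y_k\}\subset A_0\cap A_1$, $\|y-y_k\|_{A_0+A_1}\to 0$. My goal is to show $y\in (A_0,A_1)_{E:K}$, which gives $(A_0,A_1)_{E:K}\neq A_0\cap A_1$. Quasi-triangularity of the $K$-functional and the basic bounds $K(y-y_k,2^n)\le \max(1,2^n)\|y-y_k\|_{A_0+A_1}$ and $K(y_k,2^n)\le \min(1,2^n)\|y_k\|_{A_0\cap A_1}$ give, for every $k$, an upper bound on $K(y,2^n)$; taking the infimum over $k$ expresses this in terms of the $K$-functional $\tilde K(y,s)$ of $y$ in the couple $(A_0+A_1,A_0\cap A_1)$ at the parameter $s=\min(1,2^n)/\max(1,2^n)=2^{-|n|}$, which tends to zero as $n\to\pm\infty$. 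Under the weighted hypothesis $\|\mathbf{e}_n\|_E=\mathbf{O}(2^{-\theta n})$ with $\theta\in(0,1)$, and after Aoki-Rolewicz on $E$, the $p$-sum estimate $\|(K(y,2^n))_n\|_E^p\lesssim\sum_n K(y,2^n)^p\|\mathbf{e}_n\|_E^p$ converges, because the only problematic factor $\max(1,2^n)=2^n$ on $n\ge 0$ is absorbed by the weight $2^{-p\theta n}$ combined with the decay of $\tilde K(y,\cdot)$ at zero.

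The main obstacle in $(a)$ is the quasi-Banach aggregation step: unlike in the Banach setting, convexity is unavailable, and one must use $p$-convex combinations together with a careful reverse quasi-triangle inequality for $K$ to guarantee that $K(x,2^n)$ inherits a nontrivial lower bound from $K(x_n,2^n)\ge 1/2$. In $(b)$, the delicate point is the interplay between the weight exponent $\theta\in(0,1)$ and the decay rate of the Gagliardo-type $K$-functional $\tilde K(y,s)$ as $s\to 0$; the restriction $\theta<1$ is precisely what makes the weighted $p$-sum converge.
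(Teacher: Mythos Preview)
Your treatment of the two continuous embeddings is fine and essentially matches the paper (they use the half-constant sequence $\beta=\chi_{\{n\ge 0\}}$ instead of $\mathbf{e}_0$, which is inessential).

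\textbf{Part $(a)$: a genuine gap.} Your scheme ends in an ``aggregation'' step that you yourself identify as the main obstacle, and indeed it does not go through. From $x_n\in S(A_0+A_1)$ with $K(x_n,2^{-n})\ge 1/2$ there is no mechanism to pass to a single $x=\sum_n\lambda_n x_n$ with $K(x,2^{-n})\gtrsim |\lambda_n|$: the $K$-functional satisfies only a quasi-triangle inequality, so the best you get is
\[
K(x,2^{-m})\ \ge\ \frac{|\lambda_m|}{2C_K}-K\Big(\sum_{n\neq m}\lambda_n x_n,\,2^{-m}\Big),
\]
and the subtracted term is controlled only by $\|\sum_{n\ne m}\lambda_n x_n\|_{A_0+A_1}$, which does not vanish as $m$ varies. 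There is no ``reverse quasi-triangularity'' here. The paper avoids this entirely: after the Aoki--Rolewicz renorming one has $E\subseteq \ell^\infty(\Phi^E)$, so it suffices to produce a single $a\in A_0+A_1$ with $\sup_n \|\mathbf{e}_n\|_E\,K(a,2^n)=\infty$. This single element is supplied directly by Theorem~\ref{uno1} (the Baire lethargy theorem), applied with $t_n=2^{-n}$; no aggregation is needed. In other words, the Baire step you propose (open mapping on the $\Gamma_m$) is redundant, and the step you actually need is already packaged in Theorem~\ref{uno1}.

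\textbf{Part $(b)$: a different route that fails.} You try to exhibit an element of $(A_0,A_1)_{E:K}\setminus(A_0\cap A_1)$ by taking an arbitrary $y\in\overline{A_0\cap A_1}^{A_0+A_1}\setminus(A_0\cap A_1)$. Your estimate gives
\[
K(y,2^n,A_0,A_1)\ \lesssim\ \max(1,2^n)\,\tilde K\big(y,2^{-|n|},A_0+A_1,A_0\cap A_1\big),
\]
and then, after Aoki--Rolewicz, $\|y\|_{E:K}^r\lesssim \sum_n \|\mathbf e_n\|_E^r\,K(y,2^n)^r$. But the hypothesis $\Phi^E\in\ell^\infty(2^{\theta n})$ only says $\|\mathbf e_n\|_E\le D\,2^{-\theta n}$; for $n\ge 0$ this leaves the factor $2^{(1-\theta)nr}$, and for $n\le 0$ the bound $2^{-\theta n}=2^{\theta|n|}$ actually grows. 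Since you know only that $\tilde K(y,s)\to 0$ as $s\to 0$ with \emph{no} rate, both tails of the sum may diverge, and your chosen $y$ need not belong to $(A_0,A_1)_{E:K}$. The assertion that ``the restriction $\theta<1$ is precisely what makes the weighted $p$-sum converge'' is incorrect: $\theta<1$ means $1-\theta>0$, which is exactly why $2^{(1-\theta)nr}$ blows up.

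The paper's approach to $(b)$ is different and avoids this issue: rather than producing an element outside $A_0\cap A_1$, it stays \emph{inside} $A_0\cap A_1$ and shows the two norms are inequivalent there. Concretely, one picks $y_{N_0}\in S(A_0\cap A_1)$ with $\|y_{N_0}\|_{A_0+A_1}\le 2^{-N_0}$; since $y_{N_0}\in A_0\cap A_1$, one has the explicit two-sided bounds $K(y_{N_0},t)\le\min(t,2^{-N_0},t\,2^{-N_0})$, and a direct computation in $\ell^r(\Phi^E)$ (using $\theta\in(0,1)$ on both sides) shows $\|y_{N_0}\|_{E:K}\to 0$ as $N_0\to\infty$. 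This yields $A_0\cap A_1\neq (A_0,A_1)_{E:K}$ via the open mapping theorem. The key point you are missing is that working with elements of $A_0\cap A_1$ gives quantitative decay of $K$ at both $t\to 0$ and $t\to\infty$, which an arbitrary limit point $y$ does not provide.
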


\begin{proof} 
 
We prove the first statement for the sake of completeness. The inclusion $A_0\cap A_1\hookrightarrow (A_0,A_1)_{E:K}$  is a direct computation, based on the fact that if $x\in A_0\cap A_1$, then  
\begin{eqnarray*}
K(x,t,A_0,A_1) &=& \inf_{x=a_0+a_1;a_0\in A_0;a_1\in A_1}\|a_0\|_{A_0}+t\|a_1\|_{A_1}\\
& \leq& \min(1,t)\max(\|x\|_{A_0},\|x\|_{A_1}) =\min(1,t)\|x\|_{A_0\cap A_1}
\end{eqnarray*}
By hypothesis, $\{\min(1,2^n)\}_{n\in\mathbb{Z}}\in E$. It follows that, if $a\in A_0\cap A_1$, 
\[
K(a,2^n,A_0,A_1)\leq \min(1,2^n)\|a\|_{A_0\cap A_1} \text{ for all } n\in\mathbb{Z}
\]
Now, $E$ is a $\mathbb{Z}$-lattice and 
$\{\min(1,2^n)\}_{n\in\mathbb{Z}}\in E$ imply that, if we set $
M=\|\{\min(1,2^n)\}_{n\in\mathbb{Z}}\|_E$, then  
\[
\|a\|_{E:K}= \|\{K(a,2^n,A_0,A_1)\}\|_E\leq \|\{\min(1,2^n)\}_{n\in\mathbb{Z}}\|a\|_{A_0\cap A_1}\|_E = M\|a\|_{A_0\cap A_1},
\]
which is what we wanted to prove.

To prove that $(A_0,A_1)_{E:K}\hookrightarrow A_0+A_1$ we proceed as follows: We define $\beta=\{\beta_n\}_{n\in\mathbb{Z}}$ by $\beta_n=0$ for $n<0$ and $\beta_n=1$ for $n\geq 0$. Then we can use that $\|a\|_{A_0+A_1}=K(a,1,A_0,A_1)\leq K(a,2^n,A_0,A_1)$ for all $n\geq 0$, and that $E$ is a $\mathbb{Z}$-lattice to claim that 
\[
\|\|a\|_{A_0+A_1}\beta\|_E\leq \|\{K(a,2^n,A_0,A_1)\}_{n\in\mathbb{Z}}\|_E=\|a\|_{E:K}
\]
so that, taking $C=\frac{1}{\|\beta\|_E}$, 
\[
\|a\|_{A_0+A_1}\leq C\|a\|_{E:K}.
\]

Let us now demonstrate $(a)$. 

Thanks to Aoki-Rolewicz's trick \cite{DvL}, we can assume without loss of generality that $E$ is $r$-normed for some $0<r\leq 1$. Moreover, this renorming does not affect to the other hypotheses we have imposed on $E$. Then we have that $\ell^r(\Phi^E)\subseteq E\subseteq \ell^{\infty}(\Phi^E)$ 
Thus, in order to prove that $(A_0,A_1)_{E:K}$ is strictly contained into $A_0+A_1$ we only need to find an element $a\in A_0+A_1$ such that $\{K(a,2^n,A_0,A_1)\}\not\in \ell^{\infty}(\Phi^E)$.


Indeed, we distinguish two cases:

\noindent \textbf{Case 1:}  $A_1 \hookrightarrow A_0+A_1$ with a strict inclusion. 

Then Theorems \ref{fundamental} and \ref{uno1} guarantee that there are elements $a\in A_0+A_1$ such that $\{K(a,2^{-n},A_0,A_1)\}_{n=0}^{\infty}$ goes to zero as slowly as we want. Thus, we can find $a\in A_0+A_1$ such that $\|\{\|\mathbf{e}_{-n}\|_E K(a,2^{-n},A_0,A_1)\}\|_{\ell_{\infty}(\mathbb{N})}=\infty$, so that  
$$\|\{K(a,2^n,A_0,A_1)\}\|_{\ell^{\infty}(\Phi^E)}=\|\{\|\mathbf{e}_n\|_E K(a,2^{n},A_0,A_1)\}\|_{\ell_{\infty}(\mathbb{Z})}=\infty,$$  
which proves the result.

\noindent \textbf{Case 2:}  $A_0 \hookrightarrow A_0+A_1$ with a strict inclusion.  

Then Theorems \ref{fundamental} and \ref{uno1} again guarantee that there are elements $a\in A_0+A_1$ such that $\{K(a,2^{-n},A_1,A_0)\}_{n=0}^{\infty}$ goes to zero as slowly as we want. Thus, we can find $a\in A_0+A_1$ such that $$\|\{\|\mathbf{e}_n\|_E K(a,2^{n},A_0,A_1)\}\|_{\ell_{\infty}(\mathbb{N})}=\|\{\|\mathbf{e}_n\|_E 2^nK(a,2^{-n},A_1,A_0)\}\|_{\ell_{\infty}(\mathbb{N})}=\infty,$$ so that  
$$\|\{K(a,2^n,A_0,A_1)\}\|_{\ell^{\infty}(\Phi^E)}=\|\{\|\mathbf{e}_n\|_E K(a,2^{n},A_0,A_1)\}\|_{\ell_{\infty}(\mathbb{Z})}=\infty,$$  
which proves the result.





Let us now demonstrate $(b)$. As $A_0\cap A_1 \hookrightarrow A_0+A_1$, we know that $\|y\|_{A_0+A_1}\leq M\|y\|_{A_0\cap A_1}$ for a certain $M>0$ and all $y\in A_0\cap A_1$. 
By hypothesis, $A_0\cap A_1$ is not closed in $A_0+A_1$, so that there exists a sequence $\{y_n\}_{n=0}^{\infty}\subseteq S(A_0\cap A_1)$ such that $\|y_n\|_{A_0+A_1}$ converges to $0$ when $n$ goes to infinity (this is so because the norms $\|\cdot\|_{A_0\cap A_1}$ and $\|\cdot \|_{A_0+A_1}$ are not equivalent). In particular, for every $N_0\in\mathbb{N}$ there exists $y_{N_0}\in S(A_0\cap A_1)$ such that $K(y_{N_0},1,A_0,A_1)=K(y_{N_0},1,A_1,A_0)=\|y_{N_0}\|_{A_0+A_1}\leq 2^{-N_0}$. 

Hence
\begin{eqnarray*}
&\ & \max\{K(y_{N_0},t,A_0,A_1),K(y_{N_0},t,A_1,A_0)\} \\
&\ &  \quad \quad \leq \min(1,t)\|y_{N_0}\|_{A_0\cap A_1}=\min(1,t)
\end{eqnarray*}
and 
\begin{eqnarray*}
&\ &
\max\{K(y_{N_0},t,A_0,A_1),K(y_{N_0},t,A_1,A_0)\}\\
&\ &  \quad \quad\leq \max(1,t)\|y_{N_0}\|_{A_0+A_1}\leq \max(1,t)2^{-N_0}
\end{eqnarray*}
This obviously implies that 
\[
\max\{K(y_{N_0},t,A_0,A_1),K(y_{N_0},t,A_1,A_0)\} \leq \left\{\begin{array}{lllll} 
t & \text{ if } t< \frac{1}{2^{N_0}} \\
\frac{1}{2^{N_0}} & \text{ if } \frac{1}{2^{N_0}}\leq t\leq 1 \\
\\
t\frac{1}{2^{N_0}} & \text{ if } t>1
\end{array} \right.
\]
Now, the inclusion $\ell^r(\Phi^E)\subseteq E$ means that $\|\{\alpha_n\}\|_E\leq C \|\{\alpha_n\}\|_{\ell^r(\Phi^E)}$ for a certain constant $C>0$, and $\Phi^E\in \ell^{\infty}(2^{\theta n})$ means that $\|\mathbf{e}_n\|_E2^{\theta n}\leq D$ for all $n\in\mathbb{N}$ and certain $D>0$. 

Hence
\begin{eqnarray*}
&\ & \|y_{N_0}\|_{E:K}  =  \|\{K(y_{N_0},2^n,A_0,A_1)\|_{E} 
\leq  C \|\{K(y_{N_0},2^n,A_0,A_1)\}\|_{\ell^r(\Phi^E)} \\
&=& C \left[ \sum_{k=0}^{\infty} \|e_{-k}\|_E^r K(y_{N_0}, \frac{1}{2^k},A_0,A_1)^r  + \sum_{k=1}^{\infty} \|e_{k}\|_E^r K(y_{N_0}, 2^k,A_0,A_1)^r \right]^{\frac{1}{r}} 
\\
&\leq & D C \left[ \sum_{k=0}^{\infty} 2^{k\theta r} K(y_{N_0}, \frac{1}{2^k},A_0,A_1)^r  + \sum_{k=1}^{\infty} 2^{-k\theta r} K(y_{N_0}, 2^k,A_0,A_1)^r \right]^{\frac{1}{q}} \\
 & = & D C \left[ \sum_{k=0}^{\infty} 2^{k\theta r} K(y_{N_0}, \frac{1}{2^k},A_0,A_1)^r  + 
 \sum_{k=1}^{\infty} 2^{(1-\theta)kr} K(y_{N_0}, \frac{1}{2^k},A_1,A_0)^r \right]^{\frac{1}{r}} \\
 & \leq & D C\left[ \sum_{k=0}^{N_0} 2^{k\theta r} 2^{-N_0r}+ \sum_{k=N_0+1}^{\infty} 2^{(\theta-1)kr} \right. \\
 &\ & \quad \quad \left. +
 \sum_{k=1}^{N_0} 2^{(1-\theta)kr} 2^{-N_0r}+ \sum_{k=N_0+1}^{\infty} 2^{(-\theta)kr} \right]^{\frac{1}{r}}\\
 &=&  D C \left[ \frac{2^{\theta r (N_0+1)}-1}{2^{\theta r}-1}2^{-N_0r}+ \frac{2^{(\theta-1)r(N_0+1)}}{1-2^{(\theta-1)r}} \right. \\
 &\ & \quad \quad \left. +
  \frac{2^{(1-\theta) r (N_0+1)}-1}{2^{(1-\theta) r}-1}2^{-N_0r}+ \frac{2^{-\theta r(N_0+1)}}{1-2^{-\theta r}}
 \right]^{\frac{1}{r}}\\
 &\leq&  D C \left[ 2^{(\theta-1)rN_0}\frac{2^{\theta r}}{2^{\theta r}-1} + \frac{2^{(\theta-1)r(N_0+1)}}{1-2^{(\theta-1)r}} 
\right. \\
 &\ & \quad \quad \left.  + 2^{-\theta rN_0}\frac{2^{(1-\theta)r}}{2^{(1-\theta) r}-1} + \frac{2^{-\theta r(N_0+1)}}{1-2^{-\theta r}}  \right]^{\frac{1}{r}},\\ 
\end{eqnarray*}
which converges to $0$ when $N_0$ goes to infinity. This demonstrates that the norms $\| \cdot\| _{A_0\cap A_1}$ and $\|\cdot \|_{E:K}$ are not equivalent on $A_0\cap A_1$, which implies that $A_0\cap A_1\neq (A_0,A_1)_{E:K}$.


\end{proof}

Note that condition $\{\min(1,2^n)\}_{n\in\mathbb{Z}}\in E$ is equivalent to a well known property of sequence spaces in the real-interpolation realm: Indeed $$\{\min(1,2^n)\}_{n\in\mathbb{Z}}\in E\Longleftrightarrow \ell^{\infty}(\max(1,2^{-n}))=\ell^{\infty}\cap \ell^{\infty}(2^{-n})\hookrightarrow E$$
which is equivalent to $E$ been $K$-nontrivial.


\begin{cor} \label{strict}
    Given a couple of quasi-Banach spaces $(A_0,A_1)$, we always have
    $$ A_0\cap A_1 \hookrightarrow (A_0,A_1)_{\theta,q} \hookrightarrow A_0+A_1.$$
    Moreover:
    \begin{itemize}
        \item[$(a)$] If $A_0\neq A_0+A_1$ or $A_1\neq A_0+A_1$, then $$  (A_0,A_1)_{\theta,q} \neq A_0+A_1.$$
    \item[$(b)$] If $A_0\cap A_1$ is not closed in $A_0+A_1$. then 
    $$(A_0,A_1)_{\theta,q} \neq A_0\cap A_1.$$     
    \end{itemize}
\end{cor}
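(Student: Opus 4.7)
The plan is to realize the classical real interpolation space $(A_0,A_1)_{\theta,q}$ as a special case of the general construction $(A_0,A_1)_{E:K}$ for an appropriately chosen sequence lattice $E$, and then simply invoke Theorem \ref{strict0}. The natural candidate is the weighted $\ell^q$-space
\[
E=\ell^{q}\bigl(2^{-\theta n}\bigr)=\Bigl\{(\alpha_n)_{n\in\mathbb{Z}}:\|(\alpha_n)\|_E=\bigl(\textstyle\sum_{n\in\mathbb{Z}}2^{-\theta nq}|\alpha_n|^q\bigr)^{1/q}<\infty\Bigr\},
\]
with the obvious modification for $q=\infty$. First I would verify via the discretization formula recalled in the text (with $q=2$, $\Phi=L_q(t^{-\theta})$ over $(\mathbb{R}^+,dt/t)$) that the discrete norm $\|(K(x,2^n,A_0,A_1))\|_E$ is equivalent to the continuous norm defining $(A_0,A_1)_{\theta,q}$; this amounts to the standard computation
\[
\int_{2^n}^{2^{n+1}} t^{-\theta q}\,\tfrac{dt}{t}=\tfrac{1-2^{-\theta q}}{\theta q}\,2^{-\theta nq},
\]
which gives $(A_0,A_1)_{\theta,q}=(A_0,A_1)_{E:K}$ with equivalent quasi-norms.

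Next, I would verify that this $E$ meets all the hypotheses required by Theorem \ref{strict0}. Clearly $E$ is a $\mathbb{Z}$-lattice. The condition $\{\min(1,2^n)\}_{n\in\mathbb{Z}}\in E$ reduces to convergence of $\sum_{n\geq 0} 2^{-\theta nq}$ and $\sum_{n<0} 2^{(1-\theta)nq}$, both of which hold precisely because $0<\theta<1$. For part (b) I would observe that $\|\mathbf{e}_k\|_E=2^{-\theta k}$, so $\Phi^E=\{2^{-\theta n}\}_{n\in\mathbb{Z}}$ and $\|\mathbf{e}_n\|_E\cdot 2^{\theta n}=1$, placing $\Phi^E$ trivially in $\ell^{\infty}(2^{\theta n})$ with the same $\theta\in(0,1)$ given in the statement.

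With these verifications in place, the three conclusions of the corollary follow immediately from the corresponding parts of Theorem \ref{strict0}: the continuous embeddings $A_0\cap A_1\hookrightarrow (A_0,A_1)_{\theta,q}\hookrightarrow A_0+A_1$ from the unnumbered part; strictness versus $A_0+A_1$ under the hypothesis of (a) from Theorem \ref{strict0}(a); and strictness versus $A_0\cap A_1$ under the hypothesis of (b) from Theorem \ref{strict0}(b). The endpoint cases $\theta=0,1$ (allowed only with $q=\infty$) are treated the same way, with $E=\ell^{\infty}$ or $\ell^{\infty}(2^{-n})$, for which the lattice and $\Phi^E$ conditions still hold.

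The main obstacle is essentially bookkeeping rather than mathematics: one must be careful that the discretization of the continuous $L^q(dt/t)$ norm really produces the space $E=\ell^q(2^{-\theta n})$ up to equivalence, and that the exponent $\theta$ in the hypothesis of Theorem \ref{strict0}(b) can be taken to coincide with the interpolation parameter $\theta\in(0,1)$ (which is why the weighted $\ell^q$-space is the right choice: its basis vectors already have norms $2^{-\theta n}$). Once the identification $E=\ell^q(2^{-\theta n})$ is made, no further argument is needed.
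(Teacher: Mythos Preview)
Your proposal is correct and matches the paper's approach exactly: the corollary is stated in the paper without proof, precisely because it is the direct specialization of Theorem~\ref{strict0} with $E=\ell^{q}(2^{-\theta n})$, and your verifications of $\{\min(1,2^n)\}\in E$ and $\Phi^E\in\ell^{\infty}(2^{\theta n})$ are the right ones.

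One small correction to your closing remark: the endpoint cases $\theta\in\{0,1\}$ do \emph{not} satisfy the hypothesis of Theorem~\ref{strict0}(b). For $E=\ell^{\infty}$ you get $\Phi^E=\{1\}_{n\in\mathbb{Z}}$, and $\sup_n 2^{\theta' n}\cdot 1=\infty$ for every $\theta'\in(0,1)$; for $E=\ell^{\infty}(2^{-n})$ you get $\Phi^E=\{2^{-n}\}$, and $\sup_n 2^{\theta' n}\cdot 2^{-n}=\sup_n 2^{(\theta'-1)n}=\infty$ as $n\to-\infty$. This is harmless, since the corollary is intended only for $0<\theta<1$, but the sentence as written is incorrect.
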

The condition given in item 
$(b)$ of Corollary  \ref{strict}  is a natural condition since for any Banach couple $(A_0,A_1)$, if $A_0\cap A_1$ is closed in $A_0+A_1$ then $$(A_0, A_1)_{\theta, p}=A_0\cap A_1\quad  \mbox{for all parameters}.$$ 

The following result was proved in \cite[Theorem 14, item (a)]{AF} but the proof was not complete, as was pointed out  by F. Cobos (personal communication) to the second author of this paper. It is because of this that we include the statement and proof of the theorem here:

\begin{thm} \label{inter} Let $(A_0,A_1)$ be a couple of quasi-Banach spaces. Assume that $A_1\hookrightarrow A_0$, that  $A_1\neq A_0$ and that $A_1$ is not closed in $A_0$. Assume also that $0<\theta_0, \theta_1<1$, $\theta_0\neq \theta_1$, and $0<p<\infty$, $0<q\leq \infty$. Then  
 $$(A_0,A_1)_{\theta_0,p}\neq  (A_0,A_1)_{\theta_1,q}.$$ 
\end{thm}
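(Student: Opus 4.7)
I would proceed by contradiction, combining the reiteration theorem recalled above from \cite{BK,N} with the hypothesis that $A_1$ is not closed in $A_0=A_0+A_1$ and the slow-decay result Theorem \ref{uno1}. Without loss of generality take $\theta_0<\theta_1$, and suppose $B:=(A_0,A_1)_{\theta_0,p}=(A_0,A_1)_{\theta_1,q}$ as sets; the BCT (through the open mapping theorem for quasi-Banach spaces) then makes the two quasi-norms on $B$ equivalent.

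The first step is to apply reiteration to the constant couple $\bigl((A_0,A_1)_{\theta_0,p},(A_0,A_1)_{\theta_1,q}\bigr)=(B,B)$. Since $K(a,t,B,B)\asymp\min(1,t)\|a\|_B$ (up to the quasi-triangle constant), one has $(B,B)_{\eta,r}=B$ with equivalent quasi-norms for every $\eta\in(0,1)$ and $r\in(0,\infty]$. Reiteration then gives
\[
(A_0,A_1)_{(1-\eta)\theta_0+\eta\theta_1,\,r}=\bigl((A_0,A_1)_{\theta_0,p},(A_0,A_1)_{\theta_1,q}\bigr)_{\eta,r}=B,
\]
so for every $\theta^*\in(\theta_0,\theta_1)$ the spaces $(A_0,A_1)_{\theta^*,p}$ and $(A_0,A_1)_{\theta^*,\infty}$ coincide as quasi-Banach spaces. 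The plan is to contradict this identification by exhibiting an element in the second but not in the first.

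The second step builds $a\in A_0$ with $\sup_{t>0}t^{-\theta^*}K(a,t,A_0,A_1)<\infty$ but $\int_0^{\infty}(t^{-\theta^*}K(a,t,A_0,A_1))^p\,dt/t=\infty$. Using a sequence $\{y_n\}\subset S(A_1)$ with $\delta_n:=\|y_n\|_{A_0}\to 0$ supplied by the non-closedness hypothesis (after extraction, $\delta_n=2^{-n}$), set
\[
a:=\sum_{n=1}^{\infty}2^{n(1-\theta^*)}y_n,
\]
which converges absolutely in $A_0$ because $\sum 2^{-n\theta^*}<\infty$. The upper bound $K(a,t,A_0,A_1)\lesssim t^{\theta^*}$ on $(0,1]$ follows from $K(y_n,t,A_0,A_1)\leq\min(t,\delta_n)$ and the quasi-subadditivity of $K$, placing $a$ in $(A_0,A_1)_{\theta^*,\infty}$; the matching lower bound $K(a,2^{-m},A_0,A_1)\gtrsim 2^{-m\theta^*}$ along the dyadic subsequence, coming from the presence of the $y_m$-component at scale $2^{-m}$, then forces the $L^p(dt/t)$ integral to diverge, contradicting $a\in(A_0,A_1)_{\theta^*,p}$.

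The main obstacle is the lower bound on $K(a,2^{-m},A_0,A_1)$: while its upper bound is immediate, in the abstract quasi-Banach setting one must rule out that a clever decomposition of $a$ exploits cancellations among the $y_n$. I expect this will require either a careful thinning of $\{y_n\}$ so that the dyadic scales decouple, or else iterating the reiteration step with the couples $(A_0,B)$ and $(B,A_1)$ to extend the collapse $(A_0,A_1)_{\theta,r}=B$ to the full range $(\theta,r)\in(0,1)\times(0,\infty]$, after which Theorem \ref{uno1} applied to $(A_0,A_1)$ directly produces an element whose $K$-functional violates the required $\theta$-independence.
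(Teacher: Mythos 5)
Your first step (reiterating on the constant couple $(B,B)$ via formula \eqref{reit4} to collapse all $(A_0,A_1)_{\theta^*,r}$ with $\theta^*\in[\theta_0,\theta_1]$ into $B$) is sound, but the step that is supposed to produce the contradiction has a genuine gap, and it is exactly the gap you flag yourself: the lower bound $K(a,2^{-m},A_0,A_1)\gtrsim 2^{-m\theta^*}$ for $a=\sum_n 2^{n(1-\theta^*)}y_n$. Knowing $\|y_m\|_{A_1}=1$ and $\|y_m\|_{A_0}=2^{-m}$ says nothing about decompositions of the \emph{sum} $a$; a competitor decomposition of $a$ at scale $t=2^{-m}$ need not respect the splitting into the $y_n$'s. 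In the Banach setting one controls this with norming functionals (or, as in Janson--Nilsson--Peetre--Zafran, with duality), but the paper points out in the introduction that Hahn--Banach is precisely what fails for quasi-Banach spaces; the known ways to salvage such a construction impose extra structure (this is why Cobos--Cwikel--K\"{u}hn need the Gagliardo-couple hypothesis). "Careful thinning of $\{y_n\}$" does not obviously repair this, so the construction cannot be regarded as a routine completion. Note also that your route would not explain the asymmetry $p<\infty$, $q\leq\infty$ in the statement.

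The paper avoids any explicit construction. It writes $\theta_1=\alpha\theta_0$ and uses the reiteration $(A_0,(A_0,A_1)_{\theta_0,p})_{\alpha,q}=(A_0,A_1)_{\alpha\theta_0,q}$, so everything reduces to showing that the interpolation spaces of the \emph{ordered} couple $(A_0,(A_0,A_1)_{\theta_0,p})$ are strictly intermediate. That is Corollary \ref{strict}, whose proof is where the BCT enters (through Theorems \ref{fundamental} and \ref{uno1}). The one nontrivial verification is that $(A_0,A_1)_{\theta_0,p}$ is not closed in $A_0$ when $p<\infty$: this is done by passing to $X=\overline{A_1}^{A_0}$, noting $(A_0,A_1)_{\theta_0,p}=(X,A_1)_{\theta_0,p}$ sits strictly between the dense subspace $A_1$ and $X$. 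Your fallback suggestion (extend the collapse by further reiteration and invoke Theorem \ref{uno1}) points in this direction, but as stated it is not a proof: slow decay of $K(\cdot,t,A_0,A_1)$ along a sequence does not by itself contradict the collapse on $(\theta_0,\theta_1)$; you need the strict-intermediacy statement for the derived couple and the non-closedness verification above. As it stands, the proposal does not constitute a proof.
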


\begin{proof}
This result follows from Corollary \ref{strict} and the fact that $A_0$ is a space of class $\mathcal{C}(0,(A_0,A_1))$, and $(A_0,A_1)_{\theta,p}$ is a space of class $\mathcal{C}(\theta,(A_0,A_1))$, which implies that we can use the following reiteration formula \cite[Theorem 3.11.5]{BL}:
\begin{equation}\label{reiteX}
(A_0,(A_0,A_1)_{\theta,p})_{\alpha,q}=(A_0,A_1)_{\alpha\theta,q}
\end{equation}
Indeed, if $0<\theta_1<\theta_0<1$, then $\theta_1=\alpha\theta_0$ for a certain $\alpha\in]0,1[$, which implies that 
\begin{equation}\label{reit2}
(A_0,A_1)_{\theta_1,q}=(A_0,A_1)_{\alpha\theta_0,q}=(A_0,(A_0,A_1)_{\theta_0,p})_{\alpha,q}
\end{equation}
Corollary \ref{strict} guarantees that $(A_0,A_1)_{\theta_0,p}$ is (a quasi-Banach space) strictly embedded into $A_0$. 
Moreover, for $p<\infty$, $(A_0,A_1)_{\theta_0,p}$ is not closed in $A_0$. 

Indeed, $p<\infty$ in conjunction with $A_1\hookrightarrow A_0$, imply that $(A_0,A_1)_{\theta,p}\subseteq \overline{A_1}^{A_0}$ and $A_1$ not closed in $A_0$ implies that $A_1$ is strictly contained into $X:=\overline{A_1}^{A_0}$. Here by $\overline{A_1}^{A_0}$ we mean the closure of $A_1$ in $A_0$. Moreover, $(A_0,A_1)_{\theta,p}=(X,A_1)_{\theta,p}$, where $X$ is doted with the norm of $A_0$. In particular, we can apply Corollary \ref{strict} to the couple $(X,A_1)$ to conclude that $$A_1\hookrightarrow (X,A_1)_{\theta,p}=(A_0,A_1)_{\theta,p} \hookrightarrow X$$
with strict inclusions. Now, $A_1$ is dense in $X$, which implies that  $(A_0,A_1)_{\theta,p}$ is not closed in $X$ and, henceforth, it is also not closed in $A_0$, as we wanted to demonstrate. 

 It follows that we can also apply Corollary \ref{strict} to $(A_0,(A_0,A_1)_{\theta_0,p})$, which, in conjuction with formula \eqref{reit2}, implies that $(A_0,A_1)_{\theta_1,q}$ is strictly embedded between $(A_0,A_1)_{\theta_0,p}$ and $A_0$. In particular, $$(A_0,A_1)_{\theta_1,q}\neq (A_0,A_1)_{\theta_0,p} .$$    
\end{proof}
\begin{rem}
    The conclusions of Theorem \ref{inter} also hold true if the roles of $A_0$ and $A_1$ are interchanged in the statement of the theorem, so that its hypotheses are substituted by: $A_0\hookrightarrow A_1$,  $A_1\neq A_0$ and $A_0$ is not closed in $A_1$. This is so because   
    $(A_0,A_1)_{\theta,q} = (A_1,A_0)_{1-\theta,q}$ for all $\theta\in ]0,1[$. 
\end{rem}

Theorem \ref{inter} has been stated only for ordered couples, so that generalizing it to arbitrary couples is a natural goal. An idea that partially works is to substitute the couple $(A_0,A_1)$ by the ordered couple $(A_0+A_1,A_0\cap A_1)$ and use the relations that exist between the interpolation spaces defined by this couple and the interpolation spaces associated to the original couple $(A_0,A_1)$.  Indeed, independently of our goals in this paper, in general, it is natural to ask how the interpolation couples of $(A_0,A_1)$, $(A_0+A_1, A_0)$, $(A_0, A_0\cap A_1)$ and $(A_0+A_1, A_0\cap A_1)$ are interrelated. A complete answer to this question was given in \cite{H} for Banach couples, and the problem has been studied in several other papers, always for the Banach setting \cite{AA,M,P}. 
Concretely, the following formulae are known for Banach couples $(A_0,A_1)$ and $(\theta,p)\in ([0,1]\times [1,\infty])\setminus (\{0,1\}\times [1,\infty))$:
\begin{equation}\label{sumintersection}
    (A_0+A_1,A_0\cap A_1)_{\theta,p}=\left\{\begin{array}{cccc}
   (A_0,A_1)_{\theta,p}+(A_0,A_1)_{1-\theta,p} & 0\leq \theta \leq 1/2 \\
     (A_0,A_1)_{\theta,p}\cap (A_0,A_1)_{1-\theta,p} & 1/2\leq \theta \leq 1 
\end{array}\right.
\end{equation}
Fortunately, this  formula also hold for quasi-Banach couples $(A_0,A_1)$ and $(\theta,p)\in ]0,1[\times ]0,\infty[$, as we demonstrate in the Appendix of this paper (see Theorem \ref{sumainterseccion} below). For the proof of the following theorem, we also use the reiteration formulae
\begin{equation}\label{reit3}
((A_0,A_1)_{\theta,p},(A_0,A_1)_{\theta,q})_{\alpha,r}=(A_0,A_1)_{\theta, r},\text{ where } \frac{1}{r} = 
\frac{1-\alpha}{p}+\frac{\alpha}{q},
\end{equation}
and
\begin{equation}\label{reit4}
((A_0,A_1)_{\theta_0,p_0},(A_0,A_1)_{\theta_1,p_1})_{\alpha,r}=(A_0,A_1)_{(1-\alpha)\theta_0+\alpha\theta_1, r}.
\end{equation}
\begin{thm} \label{internonordered} Let $(A_0,A_1)$ be a couple of quasi-Banach spaces. Assume that $A_1\cap A_0$ is not closed in $A_0+A_1$. Then, if $\theta\neq 1/2$, $0<\theta<1$, and $0<p<\infty$, we have that:
$$(A_0,A_1)_{\theta,p}\neq  (A_0,A_1)_{1-\theta,p}=(A_1,A_0)_{\theta,p}.$$ 
Moreover, if $0<\theta_0, \theta_1<1$, $\theta_0\neq \theta_1$, and $0<p,q<\infty$, then either 
$$ (A_0,A_1)_{\theta_0,p}\neq  (A_0,A_1)_{\theta_1,q}$$
or 
$$ (A_1,A_0)_{\theta_0,p}\neq  (A_1,A_0)_{\theta_1,q}.$$

\end{thm}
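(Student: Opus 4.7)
The overarching strategy is to transfer the problem to the ordered couple $(A_0+A_1, A_0\cap A_1)$ by means of the sum/intersection formula \eqref{sumintersection} (valid in the quasi-Banach setting by Theorem \ref{sumainterseccion}) and then invoke Theorem \ref{inter}. The hypothesis that $A_0\cap A_1$ is not closed in $A_0+A_1$ is exactly what Theorem \ref{inter} demands of this ordered couple (with $A_0\cap A_1$ playing the role of the smaller space); strict inclusion $A_0\cap A_1 \hookrightarrow A_0+A_1$ follows automatically from non-closedness and the completeness of both spaces (via the open mapping theorem).

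For the first statement, assume by contradiction $(A_0,A_1)_{\theta,p}=(A_0,A_1)_{1-\theta,p}$ with $\theta\in (0,1)\setminus\{1/2\}$; without loss of generality $\theta<1/2$, and call this common space $U$. Then the sum formula yields $(A_0+A_1,A_0\cap A_1)_{\theta,p}=(A_0,A_1)_{\theta,p}+(A_0,A_1)_{1-\theta,p}=U$, and the intersection formula (applicable since $1-\theta>1/2$) yields $(A_0+A_1,A_0\cap A_1)_{1-\theta,p}=(A_0,A_1)_{\theta,p}\cap (A_0,A_1)_{1-\theta,p}=U$. Thus the ordered couple has two coinciding interpolation spaces at the distinct parameters $\theta$ and $1-\theta$, contradicting Theorem \ref{inter}.

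For the second statement, assume both equalities hold and set $U=(A_0,A_1)_{\theta_0,p}=(A_0,A_1)_{\theta_1,q}$ and $V=(A_0,A_1)_{1-\theta_0,p}=(A_0,A_1)_{1-\theta_1,q}$ (rewriting the second equality via $(A_1,A_0)_{\theta,r}=(A_0,A_1)_{1-\theta,r}$). Suppose without loss of generality $\theta_0<\theta_1$. If both parameters lie on the same side of $1/2$, then \eqref{sumintersection} gives $(A_0+A_1,A_0\cap A_1)_{\theta_0,p}=(A_0+A_1,A_0\cap A_1)_{\theta_1,q}$ (both equal $U+V$, or both equal $U\cap V$), violating Theorem \ref{inter}. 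If $\theta_0<1/2<\theta_1$ with $\theta_0+\theta_1\neq 1$, then applying the sum formula at $\theta_0$ and at $1-\theta_1$ (both below $1/2$) yields $(A_0+A_1,A_0\cap A_1)_{\theta_0,p}=U+V=(A_0+A_1,A_0\cap A_1)_{1-\theta_1,q}$, again contradicting Theorem \ref{inter} since $\theta_0\neq 1-\theta_1$. The boundary cases where some $\theta_i$ equals $1/2$ force $U=V$ and reduce to the first statement.

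The main obstacle is the residual symmetric case $\theta_1=1-\theta_0$ with $\theta_0<1/2$, which defeats the previous sum-versus-sum comparison. If $p=q$, then (i) becomes $(A_0,A_1)_{\theta_0,p}=(A_0,A_1)_{1-\theta_0,p}$ and the first statement applies. If $p\neq q$, my plan is to apply the reiteration formula \eqref{reit4} to the couple $(U,V)$ via two distinct presentations: taking $(U,V)=((A_0,A_1)_{\theta_0,p},(A_0,A_1)_{1-\theta_0,p})$ gives $(U,V)_{\alpha,r}=(A_0,A_1)_{\theta_0+\alpha(1-2\theta_0),r}$, while taking $(U,V)=((A_0,A_1)_{1-\theta_0,q},(A_0,A_1)_{\theta_0,q})$ gives $(U,V)_{\alpha,r}=(A_0,A_1)_{1-\theta_0-\alpha(1-2\theta_0),r}$. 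Since $(U,V)_{\alpha,r}$ is intrinsic to the couple (up to equivalent quasi-norms), setting $\gamma=\theta_0+\alpha(1-2\theta_0)$ produces $(A_0,A_1)_{\gamma,r}=(A_0,A_1)_{1-\gamma,r}$. Picking any $\alpha\in (0,1)\setminus\{1/2\}$ yields $\gamma\neq 1/2$ and any finite $r$, contradicting the first statement already proved.
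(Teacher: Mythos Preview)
Your proof is correct and follows the same overall strategy as the paper: reduce to the ordered couple $(A_0+A_1,A_0\cap A_1)$ via formula \eqref{sumintersection} and invoke Theorem \ref{inter}. The first claim is handled identically in both.

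For the second claim the organization differs. The paper assumes only one equality, say $(A_0,A_1)_{\theta_0,p}=(A_0,A_1)_{\theta_1,q}$, and in the crossing situation $\theta_0<1/2\leq \theta_1$ uses reiteration (formulas \eqref{reit3} and \eqref{reit4}) with the auxiliary space $(A_0,A_1)_{\theta_0,q}$ to pull the parameter $(1-\alpha)\theta_1+\alpha\theta_0$ back below $1/2$, thereby reducing to the same-side case. You instead assume both equalities fail and, for the generic crossing case $\theta_0+\theta_1\neq 1$, compare $(A_0+A_1,A_0\cap A_1)_{\theta_0,p}$ directly with $(A_0+A_1,A_0\cap A_1)_{1-\theta_1,q}$; this is cleaner and avoids reiteration entirely. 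The price is that you must treat the symmetric case $\theta_1=1-\theta_0$ separately; your dual-presentation reiteration argument (computing $(U,V)_{\alpha,r}$ two ways to force $(A_0,A_1)_{\gamma,r}=(A_0,A_1)_{1-\gamma,r}$) is a neat device and correct, since the interpolation space depends only on the couple $(U,V)$ up to equivalence of quasi-norms. Note that your ``boundary cases'' paragraph is redundant, as $\theta_i=1/2$ already falls under your same-side case; this is harmless.
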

\begin{proof}
By hypothesis, $A_0\cap A_1$ is not closed in $A_0+A_1$, so that we can use Theorem \ref{inter} to conclude that, for $\theta\neq 1/2$, $$(A_0+A_1,A_0\cap A_1)_{\theta,p}\neq (A_0+A_1,A_0\cap A_1)_{1-\theta,p}$$
Hence, taking into account \eqref{sumintersection}, we have that 
$$
(A_0,A_1)_{\theta,p}+(A_0,A_1)_{1-\theta,p} \neq 
     (A_0,A_1)_{\theta,p}\cap (A_0,A_1)_{1-\theta,p} 
$$
which implies that $$(A_0,A_1)_{\theta,p}\neq (A_0,A_1)_{1-\theta,p}.$$
This proves the first claim. 
 
Using again Theorem \ref{inter} we conclude that, for $\theta_0\neq \theta_1$, $0<p,q<\infty$,  we have that $$(A_0+A_1,A_0\cap A_1)_{\theta_0,p}\neq (A_0+A_1,A_0\cap A_1)_{\theta_1,q}.$$
Hence, taking into account \eqref{sumintersection}, we can consider the following cases:

\noindent \textbf{Case 1: } $0<\theta_0,\theta_1\leq 1/2$.  

Then \[
    (A_0,A_1)_{\theta_0,p}+(A_0,A_1)_{1-\theta_0,p} \neq (A_0,A_1)_{\theta_1,q}+(A_0,A_1)_{1-\theta_1,q}
    \]
    Thus, if $(A_0,A_1)_{\theta_0,p}=(A_0,A_1)_{\theta_1,q}$, then $$(A_0,A_1)_{1-\theta_0,p}\neq(A_0,A_1)_{1-\theta_1,q}.$$ In other words, $$(A_1,A_0)_{\theta_0,p}\neq(A_1,A_0)_{\theta_1,q}.$$
    
    \noindent \textbf{Case 2: }  $1/2<\theta_0,\theta_1< 1$.  
    
    Then 
    \[
    (A_0,A_1)_{\theta_0,p}\cap (A_0,A_1)_{1-\theta_0,p} \neq (A_0,A_1)_{\theta_1,q}\cap (A_0,A_1)_{1-\theta_1,q}
    \]
    so that, again, if $(A_0,A_1)_{\theta_0,p}=(A_0,A_1)_{\theta_1,q}$, then $$(A_0,A_1)_{1-\theta_0,p}\neq(A_0,A_1)_{1-\theta_1,q}.$$ In other words, $$(A_1,A_0)_{\theta_0,p}\neq(A_1,A_0)_{\theta_1,q}.$$

\noindent \textbf{Case 3: }  $\theta_0<1/2\leq \theta_1$. 

If $(A_0,A_1)_{\theta_0,p}=(A_0,A_1)_{\theta_1,q}$, then for each $0<\alpha<1$ and $0<r<\infty$ we have that 
\[
((A_0,A_1)_{\theta_0,p},(A_0,A_1)_{\theta_0,q})_{\alpha,r}= ((A_0,A_1)_{\theta_1,q},(A_0,A_1)_{\theta_0,q})_{\alpha,r}.
\]
Hence, using \eqref{reit3} and  \eqref{reit4}, we get
\[
(A_0,A_1)_{\theta_0,s}=(A_0,A_1)_{(1-\alpha)\theta_1+\alpha\theta_0,r},
\]
where $\frac{1}{s}=\frac{1-\alpha}{p}+\frac{\alpha}{q}$. 
Now, taking $\alpha$ such that $\theta_0<(1-\alpha)\theta_1+\alpha\theta_0<1/2$, we can apply the result we got for case 1 to claim that
\[
(A_1,A_0)_{\theta_0,s}\neq (A_1,A_0)_{(1-\alpha)\theta_1+\alpha\theta_0,r},
\]
which implies that 
\[
((A_1,A_0)_{\theta_0,p},(A_1,A_0)_{\theta_0,q})_{\alpha,r}\neq  ((A_1,A_0)_{\theta_1,q},(A_1,A_0)_{\theta_0,q})_{\alpha,r}.
\]
Hence 
\[
(A_1,A_0)_{\theta_0,p}\neq (A_1,A_0)_{\theta_1,q}.
\]
\end{proof}

\begin{thm}
Let $(A_0,A_1)$ be a couple of quasi-Banach spaces. Assume that $A_1\cap A_0$ is not closed in $A_0+A_1$. 
If $0<\theta_0<1/2<\theta_1<1$, and $0<p,q<\infty$, then 
 $$ (A_0,A_1)_{\theta_0,p}\neq   (A_0,A_1)_{\theta_1,q}.$$
Moreover, if $$ (A_0,A_1)_{\theta_0,p}=   (A_0,A_1)_{\theta_1,q}$$
with $0<\theta_0<\theta_1\leq \frac{1}{2}$ or $\frac{1}{2}\leq \theta_0<\theta_1<1$, then all the spaces $(A_0,A_1)_{\theta,r}$, $\theta\in [\theta_0,\theta_1]$, $0<r<\infty$ collapse into a unique space, and  
$$ (A_1,A_0)_{\phi_0,r_0}\neq   (A_1,A_0)_{\phi_1,r_1}.$$
whenever $\phi_0,\phi_1\in[\theta_0,\theta_1]$, $\phi_0\neq \phi_1$, and $0<r_0,r_1<\infty$. 
\end{thm}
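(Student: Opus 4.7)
The plan rests on a single observation that unifies all three claims. Suppose $(A_0,A_1)_{\theta_0,p}=(A_0,A_1)_{\theta_1,q}$ and let $X$ denote this common space. Applying the reiteration formula \eqref{reit4} to the couple $((A_0,A_1)_{\theta_0,p},(A_0,A_1)_{\theta_1,q})$, permissible since $\theta_0\neq \theta_1$, yields for every $\alpha\in(0,1)$ and every $s\in(0,\infty)$
\[
(A_0,A_1)_{(1-\alpha)\theta_0+\alpha\theta_1,\,s}=\bigl((A_0,A_1)_{\theta_0,p},(A_0,A_1)_{\theta_1,q}\bigr)_{\alpha,s}=(X,X)_{\alpha,s}=X,
\]
where the last equality is the routine computation that $K(x,t,X,X)\asymp \min(1,t)\|x\|_X$ makes $(X,X)_{\alpha,s}$ coincide with $X$ up to equivalent quasi-norms. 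In other words, under the hypothesized equality, every space $(A_0,A_1)_{\theta,s}$ with $\theta\in(\theta_0,\theta_1)$ and $s\in(0,\infty)$ collapses onto $X$.

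Given this observation, the first claim is immediate: when $\theta_0<1/2<\theta_1$, pick $\delta>0$ so small that $1/2\pm\delta\in(\theta_0,\theta_1)$; for any $r>0$ the observation forces $(A_0,A_1)_{1/2-\delta,r}=(A_0,A_1)_{1/2+\delta,r}$, i.e., $(A_0,A_1)_{\theta,r}=(A_0,A_1)_{1-\theta,r}$ for $\theta=1/2-\delta\neq 1/2$, in direct contradiction with the first statement of Theorem \ref{internonordered}.

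For the second claim I would treat the case $0<\theta_0<\theta_1\leq 1/2$; the case $1/2\leq\theta_0<\theta_1<1$ follows by the exactly symmetric argument applied to the couple $(A_1,A_0)$ (and using $(A_0,A_1)_{\theta,s}=(A_1,A_0)_{1-\theta,s}$). The opening observation already yields the collapse on the open rectangle $(\theta_0,\theta_1)\times(0,\infty)$, and the remaining task is to extend it to the endpoints. The plan is to fix $\theta_2\in(\theta_0,\theta_1)$, note that $(A_0,A_1)_{\theta_2,r_2}=X=(A_0,A_1)_{\theta_0,p}$ identifies the couple $((A_0,A_1)_{\theta_0,r_1},(A_0,A_1)_{\theta_2,r_2})$ with $((A_0,A_1)_{\theta_0,r_1},(A_0,A_1)_{\theta_0,p})$, and then chain \eqref{reit4} with \eqref{reit3} to obtain
\[
X=(A_0,A_1)_{(1-\beta)\theta_0+\beta\theta_2,\,s}=\bigl((A_0,A_1)_{\theta_0,r_1},(A_0,A_1)_{\theta_0,p}\bigr)_{\beta,s}=(A_0,A_1)_{\theta_0,s}
\]
for every $s$ satisfying $\frac{1}{s}=\frac{1-\beta}{r_1}+\frac{\beta}{p}$. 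A quick check shows that, as $\beta\in(0,1)$ and $r_1>0$ vary, such $s$ exhaust $(0,\infty)$, so $(A_0,A_1)_{\theta_0,s}=X$ for every $s$. An entirely analogous argument with the couple $((A_0,A_1)_{\theta_3,r_3},(A_0,A_1)_{\theta_1,r_4})$, $\theta_3\in(\theta_0,\theta_1)$, delivers the collapse at $\theta=\theta_1$.

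Finally, the third claim is a one-line consequence of the collapse just established: for any $\phi_0\neq\phi_1$ in $[\theta_0,\theta_1]$ and any $r_0,r_1>0$ we have $(A_0,A_1)_{\phi_0,r_0}=(A_0,A_1)_{\phi_1,r_1}$, and then the second statement of Theorem \ref{internonordered} (applicable because $A_0\cap A_1$ is not closed in $A_0+A_1$) forces $(A_1,A_0)_{\phi_0,r_0}\neq(A_1,A_0)_{\phi_1,r_1}$. The only delicate step in the whole plan is the endpoint extension in the second claim, where the two reiteration formulas must be chained so that the couple appearing in \eqref{reit4} is identified, via the hypothesis, with a same-$\theta$ couple to which \eqref{reit3} can be applied.
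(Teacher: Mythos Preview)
Your proof is correct and follows essentially the same route as the paper: use the reiteration formula \eqref{reit4} together with $(X,X)_{\alpha,s}=X$ to collapse the spaces $(A_0,A_1)_{\theta,r}$ for intermediate $\theta$, derive the first claim by finding a symmetric pair $\theta,1-\theta$ inside $(\theta_0,\theta_1)$, and read off the third claim from Theorem~\ref{internonordered}. Your treatment is in fact slightly more complete than the paper's, since you explicitly extend the collapse from the open interval $(\theta_0,\theta_1)$ to the endpoints $\theta_0,\theta_1$ (for all $r$) by chaining \eqref{reit4} with \eqref{reit3}, whereas the paper states the closed-interval collapse but only writes out the argument for $0<\alpha<1$.
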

\begin{proof}
    This result follows from Theorem \ref{internonordered} and the reiteration formula \eqref{reit4}. Indeed, if $$(A_0,A_1)_{\theta_0,p}=   (A_0,A_1)_{\theta_1,q}=E$$ then \eqref{reit4} implies that all the spaces $(A_0,A_1)_{\theta,r}$, $\theta\in [\theta_0,\theta_1]$, $0<r<\infty$ collapse into a unique space. This is so because, if $\theta=(1-\alpha)\theta_0+\alpha\theta_1$ with $0< \alpha< 1$, then 
    \begin{eqnarray*}
    E &=& (E,E)_{\alpha,r}=((A_0,A_1)_{\theta_0,p},(A_0,A_1)_{\theta_1,q})_{\alpha,r}\\
    &=& (A_0,A_1)_{(1-\alpha)\theta_0+\alpha\theta_1, r}=(A_0,A_1)_{\theta, r}.
    \end{eqnarray*}
But this is impossible if $\theta_0<\frac{1}{2}<\theta_1$ since in such case there is a value $\theta <\frac{1}{2}$ such that $[\theta,1-\theta]\subset ]\theta_0,\theta_1[$ and Theorem \ref{internonordered} implies that $$ (A_0,A_1)_{\theta,r}\neq    (A_0,A_1)_{1-\theta,r}.$$ 
    
On the other hand, if $$ (A_0,A_1)_{\theta_0,p}=   (A_0,A_1)_{\theta_1,q}$$
with $0<\theta_0<\theta_1\leq \frac{1}{2}$ or $\frac{1}{2}\leq \theta_0<\theta_1<1$, and $0<p,q<\infty$,  then for each $\phi_0,\phi_1 \in [\theta_0,\theta_1]$  and for each $0<r_0,r_1<\infty$ we have that 
$$ (A_0,A_1)_{\phi_0,r_0}=   (A_0,A_1)_{\phi_1,r_1}$$
and Theorem \ref{internonordered} implies that, if $\phi_0\neq \phi_1$, then 
$$ (A_1,A_0)_{\phi_0,r_0}\neq   (A_1,A_0)_{\phi_1,r_1}.$$
\end{proof}

\begin{thm}\label{interpol} Let $(A_0,A_1)$ be a couple of quasi-Banach spaces, and  assume that  
$A_0\cap A_1$ is not closed in $A_0+A_1$.  
Let $\theta\in ]0,1[$ and assume that $0<p,q\leq \infty$ are such that $(A_0,A_1)_{\theta,p}\neq  (A_0,A_1)_{\theta,q}$, and that $r_1,r_2\in [q,p]$, $r_1\neq r_2$. Then $$(A_0,A_1)_{\theta,r_1}\neq (A_0,A_1)_{\theta,r_2}.$$ 
\end{thm}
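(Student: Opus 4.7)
The plan is to reduce the theorem to Theorem \ref{inter} and Corollary \ref{strict}, both applied to the ordered couple built from the two hypothesized spaces, and to transport the resulting inequalities back to the original couple through the reiteration formula \eqref{reit3}. Without loss of generality assume $q<p$; by the standard monotonicity of the real method in the second parameter one has the continuous embedding $B_1:=(A_0,A_1)_{\theta,q}\hookrightarrow B_0:=(A_0,A_1)_{\theta,p}$, which is strict by hypothesis, and for the couple $(B_0,B_1)$ one has $B_0+B_1=B_0$ and $B_0\cap B_1=B_1$.

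The crucial preparation is to verify that $B_1$ is not closed in $B_0$; this is what I expect to be the main obstacle. When $p<\infty$ it is immediate, since $A_0\cap A_1$ is dense in $(A_0,A_1)_{\theta,p}$ and is contained in $B_1$, so that $B_1$ is a dense proper subspace of $B_0$. The case $p=\infty$ is more delicate; here my plan is an indirect argument. If $B_1$ were closed in $B_0$, then every interpolation space $(B_0,B_1)_{\alpha,r}$ would necessarily coincide with $B_1$ (because $K(a,t,B_0,B_1)$ stays bounded away from zero as $t\to 0$ for $a\in B_0\setminus B_1$); combined with \eqref{reit3} this would force $(A_0,A_1)_{\theta,r}=B_1$ for every $r\in\;]q,\infty[$, and tracing this rigidity back via Corollary \ref{strict} applied to $(A_0,A_1)$ would contradict the hypothesis that $A_0\cap A_1$ is not closed in $A_0+A_1$.

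Granting this, Theorem \ref{inter} applies to the ordered couple $(B_0,B_1)$ and yields $(B_0,B_1)_{\alpha_1,s_1}\neq (B_0,B_1)_{\alpha_2,s_2}$ whenever $\alpha_1\neq\alpha_2$ in $]0,1[$ and at least one of $s_1,s_2$ is finite. The reiteration formula \eqref{reit3} identifies
\[
(B_0,B_1)_{\alpha(r),r}=(A_0,A_1)_{\theta,r},\qquad \tfrac{1}{r}=\tfrac{1-\alpha(r)}{p}+\tfrac{\alpha(r)}{q},
\]
where $r\mapsto\alpha(r)$ is a strictly monotone bijection of $[q,p]$ onto $[0,1]$ carrying $]q,p[$ onto $]0,1[$. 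Hence for any $r_1\neq r_2$ both lying in $]q,p[$ one obtains at once $(A_0,A_1)_{\theta,r_1}\neq (A_0,A_1)_{\theta,r_2}$.

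Finally, the endpoint situations, in which $r_1$ or $r_2$ equals $q$ or $p$, are handled by Corollary \ref{strict} applied again to the couple $(B_0,B_1)$: item $(a)$, via $B_1\neq B_0+B_1=B_0$, gives $(A_0,A_1)_{\theta,r}=(B_0,B_1)_{\alpha(r),r}\neq B_0=(A_0,A_1)_{\theta,p}$ for every $r\in\;]q,p[$, while item $(b)$, via the non-closedness of $B_0\cap B_1=B_1$ in $B_0+B_1=B_0$ obtained in the main step, gives $(A_0,A_1)_{\theta,r}\neq B_1=(A_0,A_1)_{\theta,q}$ for every $r\in\;]q,p[$. The remaining case $\{r_1,r_2\}=\{q,p\}$ is exactly the hypothesis of the theorem.
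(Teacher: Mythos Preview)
Your overall approach coincides with the paper's: both pass to the ordered couple $(B_0,B_1)=((A_0,A_1)_{\theta,p},(A_0,A_1)_{\theta,q})$, invoke reiteration (the paper cites \eqref{reit4}, you use \eqref{reit3}, which is the relevant one here) to identify $(B_0,B_1)_{\alpha,r}$ with $(A_0,A_1)_{\theta,r}$, and then appeal to Corollary~\ref{strict} to separate these spaces. Your use of Theorem~\ref{inter} for the interior points is a slight shortcut but rests on the same mechanism. You are in fact more careful than the paper: the paper invokes both items of Corollary~\ref{strict} (``strictly embedded between'') without ever verifying that $B_1$ is non-closed in $B_0$, whereas you flag this issue explicitly, and your density argument for $p<\infty$ (via $A_0\cap A_1$ dense in $(A_0,A_1)_{\theta,p}$) settles it cleanly.

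There is, however, a real gap in your $p=\infty$ step. You correctly argue that if $B_1$ were closed in $B_0$ then every $(B_0,B_1)_{\alpha,r}$ with $r<\infty$ would coincide with $B_1$, whence $(A_0,A_1)_{\theta,r}=B_1$ for every $r\in\,]q,\infty[$. But the closing clause --- that this ``rigidity'' contradicts, via Corollary~\ref{strict} applied to $(A_0,A_1)$, the hypothesis that $A_0\cap A_1$ is not closed in $A_0+A_1$ --- is not substantiated. Corollary~\ref{strict} only asserts that each $(A_0,A_1)_{\theta,r}$ is strictly intermediate between $A_0\cap A_1$ and $A_0+A_1$; it says nothing about whether the spaces $(A_0,A_1)_{\theta,r}$ can all coincide with one another. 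No contradiction follows without an additional argument. (The paper's own proof simply passes over this verification, so the lacuna is present there too.)
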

\begin{proof}
 
Let us assume $p>q>0$. We use the reiteration formula \eqref{reit4} 
 which, in conjunction with Corollary \ref{strict}, implies that, if $(A_0,A_1)_{\theta,p}\neq  (A_0,A_1)_{\theta,q}$  then all interpolation spaces $(A_0,A_1)_{\theta, r}$ with $r\in ]q,p[$ are strictly embedded between the spaces $(A_0,A_1)_{\theta,p}$ and  $(A_0,A_1)_{\theta,q}$ (note that, if $p>q$, then $(A_0,A_1)_{\theta,q}\hookrightarrow (A_0,A_1)_{\theta,p}$ since $\ell_q\hookrightarrow \ell_p$). This is so because   $\frac{1}{r}=\frac{1-\eta}{p}+\frac{\eta}{q}$ with $0\leq\eta\leq1$ is another way to claim that $\frac{1}{r} \in [\frac{1}{p},\frac{1}{q}]$ which is equivalent to $r\in [q,p]$. Thus, if $r_1,r_2\in ]q,p[$, $r_1<r_2$, then $(A_0,A_1)_{\theta, r_1}$ is strictly embedded in $(A_0,A_1)_{\theta,p}$ and  henceforth $(A_0,A_1)_{\theta,r_2}$ is strictly embedded between the spaces $(A_0,A_1)_{\theta,p}$ and  $(A_0,A_1)_{\theta,r_1}$, since $r_1<r_2<p$. In particular,  $(A_0,A_1)_{\theta, r_1}\neq  (A_0,A_1)_{\theta, r_2}$.
\end{proof}
\begin{cor}\label{corfin}
    Let $(A_0,A_1)$ be a couple of quasi-Banach spaces, and  assume that  
$A_0\cap A_1$ is not closed in $A_0+A_1$, and let $\theta\in ]0,1[$. Then the spaces $(A_0,A_1)_{\theta,p}$, $0<p\leq \infty$ either collapse (all of them are equal) or they are pairwise distinct. 
\end{cor}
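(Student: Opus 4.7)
The plan is to derive this corollary directly from Theorem \ref{interpol}, which already does most of the work: it shows that if two interpolation spaces $(A_0,A_1)_{\theta,p}$ and $(A_0,A_1)_{\theta,q}$ differ, then \emph{all} intermediate exponents $r \in [q,p]$ give pairwise distinct spaces. The corollary is essentially a global version of that local statement, so the argument is a short bootstrap: enlarge the interval where we know two spaces differ until it contains any given pair of parameters.

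More precisely, I would argue by contraposition. Suppose the spaces $(A_0,A_1)_{\theta,p}$, $0<p\leq\infty$, do not all collapse. Then there exist $p_0\neq q_0$ (WLOG $q_0<p_0$) with
\[
(A_0,A_1)_{\theta,q_0}\neq (A_0,A_1)_{\theta,p_0}.
\]
Fix arbitrary $r_1\neq r_2$ in $]0,\infty]$, say $r_1<r_2$, and set $q=\min(q_0,r_1)$ and $p=\max(p_0,r_2)$. Since $\ell_a\hookrightarrow \ell_b$ for $a\leq b$, we have the chain of continuous embeddings
\[
(A_0,A_1)_{\theta,q}\hookrightarrow (A_0,A_1)_{\theta,q_0}\hookrightarrow (A_0,A_1)_{\theta,p_0}\hookrightarrow (A_0,A_1)_{\theta,p}.
\]
If the outer spaces coincided, all spaces in this chain would coincide, contradicting the choice of $p_0,q_0$. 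Hence $(A_0,A_1)_{\theta,q}\neq (A_0,A_1)_{\theta,p}$, and moreover $r_1,r_2\in[q,p]$ by construction.

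Now Theorem \ref{interpol} applies to the exponents $q<p$ and the pair $r_1,r_2\in[q,p]$ with $r_1\neq r_2$, yielding $(A_0,A_1)_{\theta,r_1}\neq (A_0,A_1)_{\theta,r_2}$. Since $r_1,r_2$ were arbitrary, the family is pairwise distinct, which is the desired dichotomy. No step should present a real obstacle; the only mild subtlety is checking that the embedding chain is legitimate when $q_0$ or $p_0$ coincides with $r_1$ or $r_2$, but in those cases the extension is trivial and the argument runs identically.
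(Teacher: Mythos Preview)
Your argument is correct and follows essentially the same route as the paper: assume the family does not collapse, pick $q_0<p_0$ with distinct spaces, enlarge the interval via the monotone embeddings $(A_0,A_1)_{\theta,s}\hookrightarrow (A_0,A_1)_{\theta,t}$ for $s\leq t$ so as to contain any prescribed pair $r_1,r_2$, and then invoke Theorem~\ref{interpol}. The only cosmetic difference is that the paper takes strict inequalities $s<q_0<p_0\leq t$ while you use $q=\min(q_0,r_1)$, $p=\max(p_0,r_2)$, which amounts to the same thing.
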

\begin{proof}
    Assume that the spaces $(A_0,A_1)_{\theta,p}$ do not collapse. Then there are $0<q<p\leq \infty$ such that  $(A_0,A_1)_{\theta,p}\neq (A_0,A_1)_{\theta,q}$.  Take $t,s$ such that  $0<s<q<p\leq t\leq \infty$.  Then $$(A_0,A_1)_{\theta,s}\neq (A_0,A_1)_{\theta,t}$$ since 
    $$(A_0,A_1)_{\theta,s} \hookrightarrow (A_0,A_1)_{\theta,q} \hookrightarrow (A_0,A_1)_{\theta,p}\hookrightarrow (A_0,A_1)_{\theta,t}$$
    Thus, Theorem \ref{interpol} implies that the spaces $(A_0,A_1)_{\theta,r}$ are pairwise distinct for $r\in [s,t]$. 
\end{proof}
\section{Applications}
An interesting application of the results above is the proof of strict inclusions for the embeddings of the operator ideals defined by approximation numbers $a_n(T)$, i.e., consider  the space of all bounded operators $T:X \to Y$ where the sequence $(a_n(T))$ belongs to the Lorentz sequence space $\ell_{p,q}$. This operator ideal is denoted by $\mathcal{L}_{p,q}^{(a)}(X,Y)$  and is defined as: 

\[
\mathcal{L}_{p,q}^{(a)}(X,Y)=\{T\in \mathcal{L}(X,Y): \|T\|_{p,q}=\left(\sum_{n=1}^{\infty}(n^{\frac{1}{p}-\frac{1}{q}}a_n(T))^q\right)^{1/q}<\infty\}.
\]
Here, $X,Y$ are Banach spaces and $a_n(T)=\inf_{\text{\rm{rank}} (R)<n}\|T-R\|$ is the $n$-th approximation number of the operator $T:X\to Y$. 

In particular, it is known (see \cite[Theorem 1]{K}) that 
\begin{equation}\label{opersnumb}
    \mathcal{L}_{p,q}^{(a)}(X,Y)= (\mathcal{L}_{p_0,p_0}^{(a)}(X,Y),\mathcal{L}_{p_1,p_1}^{(a)}(X,Y))_{\theta,q},\quad \frac{1}{p}=\frac{1-\theta}{p_0}+\frac{\theta}{p_1}
\end{equation}
and that (see \cite[Lemma 4.2]{CK}), for $0<p_0<p_1<\infty$ and $0<\theta<1$, 
\begin{equation}
\mathcal{L}_{p_0,p_0}^{(a)}(X,Y)\hookrightarrow\mathcal{L}_{p_1,p_1}^{(a)}(X,Y)
\end{equation} 
with strict inclusion, and $\mathcal{L}_{p_0,p_0}^{(a)}(X,Y)$ is not closed in $\mathcal{L}_{p_1,p_1}^{(a)}(X,Y)$ so that we can use Theorems \ref{strict}, \ref{inter}, \ref{interpol}, as well as Corollary \ref{corfin} with the couple $ (\mathcal{L}_{p_0,p_0}^{(a)}(X,Y),\mathcal{L}_{p_1,p_1}^{(a)}(X,Y))_{\theta,q}.$  Concretely, the following holds:

\begin{thm}\label{operadores}
    If $p\neq p^*$, $0<p,p^*<\infty$, then $$\mathcal{L}_{p,q_1}^{(a)}(X,Y)\neq \mathcal{L}_{p^*,q_2}^{(a)}(X,Y)$$
    for all $0<q_1,q_2<\infty$.
    Furthermore, for any given $p>0$, 
    the spaces $$\mathcal{L}_{p,r}^{(a)}(X,Y), \quad 0<r<\infty $$ either collapse (are all equal) or they are pairwise distinct. 
\end{thm}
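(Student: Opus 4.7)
The plan is to realize the operator ideals $\mathcal{L}^{(a)}_{p,q}(X,Y)$ as real interpolation spaces of a well-chosen ordered quasi-Banach couple, and then invoke Theorem \ref{inter} for the first claim and Corollary \ref{corfin} for the second.

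First, I would fix auxiliary exponents $0<p_0<\min(p,p^*)$ and $\max(p,p^*)<p_1<\infty$, and set $A_0:=\mathcal{L}^{(a)}_{p_1,p_1}(X,Y)$ and $A_1:=\mathcal{L}^{(a)}_{p_0,p_0}(X,Y)$ (so that the smaller space is called $A_1$, matching the setup of Theorem \ref{inter}). By \cite[Lemma 4.2]{CK} the inclusion $A_1\hookrightarrow A_0$ is strict and $A_1$ is not closed in $A_0$; since the couple is ordered, this is equivalent to $A_0\cap A_1=A_1$ not being closed in $A_0+A_1=A_0$. Thus both Theorem \ref{inter} and Corollary \ref{corfin} are available for the couple $(A_0,A_1)$.

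Next, I would translate the representation \eqref{opersnumb} into our ordering: for every $p\in(p_0,p_1)$ there is a unique $\theta(p)\in(0,1)$ determined by $\tfrac{1}{p}=\tfrac{1-\theta}{p_0}+\tfrac{\theta}{p_1}$, and
\[
\mathcal{L}^{(a)}_{p,q}(X,Y)=(A_1,A_0)_{\theta(p),q}=(A_0,A_1)_{1-\theta(p),q}.
\]
Since the map $p\mapsto\theta(p)$ is strictly monotone on $(p_0,p_1)$, the hypothesis $p\neq p^*$ forces $1-\theta(p)\neq 1-\theta(p^*)$ in $(0,1)$. Applying Theorem \ref{inter} to the couple $(A_0,A_1)$ with the pair of parameters $(1-\theta(p),q_1)$ and $(1-\theta(p^*),q_2)$ then yields
\[
\mathcal{L}^{(a)}_{p,q_1}(X,Y)=(A_0,A_1)_{1-\theta(p),q_1}\neq (A_0,A_1)_{1-\theta(p^*),q_2}=\mathcal{L}^{(a)}_{p^*,q_2}(X,Y),
\]
which is the first statement.

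For the second claim, I fix $p>0$ and pick any $0<p_0<p<p_1<\infty$, again forming $(A_0,A_1)=(\mathcal{L}^{(a)}_{p_1,p_1}(X,Y),\mathcal{L}^{(a)}_{p_0,p_0}(X,Y))$, which still satisfies the hypotheses of Corollary \ref{corfin}. Writing $\eta:=1-\theta(p)\in(0,1)$, the formula above gives
\[
\mathcal{L}^{(a)}_{p,r}(X,Y)=(A_0,A_1)_{\eta,r}\qquad\text{for all } 0<r<\infty,
\]
so the dichotomy (collapse vs. pairwise distinct) for $\{(A_0,A_1)_{\eta,r}\}_{0<r<\infty}$ supplied by Corollary \ref{corfin} transfers verbatim to $\{\mathcal{L}^{(a)}_{p,r}(X,Y)\}_{0<r<\infty}$.

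The proof is almost purely bookkeeping once the representation \eqref{opersnumb} is in place; the only point requiring any care is matching the ordering conventions (swapping $(B_0,B_1)$ to $(A_1,A_0)$ so that $A_1\hookrightarrow A_0$) and observing that for ordered couples the "intersection not closed in sum" condition in Corollary \ref{corfin} reduces to the non-closure of $A_1$ in $A_0$ provided by \cite[Lemma 4.2]{CK}. No genuine obstacle is expected.
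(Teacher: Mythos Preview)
Your proposal is correct and follows essentially the same route as the paper: it invokes the interpolation identity \eqref{opersnumb} and \cite[Lemma 4.2]{CK} to place the ordered couple $(\mathcal{L}^{(a)}_{p_1,p_1},\mathcal{L}^{(a)}_{p_0,p_0})$ within the scope of Theorem~\ref{inter} and Corollary~\ref{corfin}, exactly as the paper does in the paragraph preceding the theorem. Your only addition is the explicit bookkeeping of the ordering swap $(A_0,A_1)_{1-\theta,q}=(A_1,A_0)_{\theta,q}$, which the paper leaves implicit.
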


    In his late years, A. Pietsch posed as an open problem to prove that $$\mathcal{L}_{p,q}^{(a)}(X,Y)\neq \mathcal{L}_{p^*,q^*}^{(a)}(X,Y)$$ whenever $(p,q)\neq (p^*,q^*)$, for arbitrary Banach spaces $X,Y$. The problem was solved, in positive sense, in \cite{CK}. To prove it, the authors solved Lions problem for general quasi-Banach couples. Their result was quite technical, when compared to the results we have presented in this paper, and required the extra hypothesis that the couple $(A_0,A_1)$ is a Gagliardo couple, which is not assumed here. Of course, as a compensation to the technicality of their proof, they solved the problem for all cases while, with our elementary approach, we only obtain a partial solution to Lions problem, but we still solve all cases except one of Pietsch's problem about the ideals of operators $\mathcal{L}_{p,q}^{(a)}(X,Y)$. Note that a full solution to this problem also follows directly as a consequence of Oikhberg's results in \cite{O}, but these are also quite technical.

Let us now focus our attention on operator ideals defined with general $s$-numbers sequences. Recall that a rule that associates to every operator $T$ a sequence $(s_n(T))$ of non-negative numbers is called a $s$-numbers sequence (or $s$-numbers scale) if the following holds:
\begin{itemize}
    \item $\|T\|=s_1(T)\geq s_2(T)\geq \cdots \geq 0$.
    \item $s_{n+m-1}(T+S)\leq s_n(T)+s_m(S)$ for $S,T\in \mathcal{L}(X,Y)$, $n,m=1,2,\cdots$.
    \item $s_{n+m-1}(TS)\leq s_n(T)s_m(S)$ for $S\in \mathcal{L}(X,Y)$, $T\in \mathcal{L}(Y,Z)$, $n,m=1,2,\cdots$.
    \item $s_n(1_d:\ell_2^n\to \ell_2^n)=1$, $n=1,2,\cdots$.
    \item $s_n(T)=0$ if $\text{\rm{rank}} (T)<n$. 
\end{itemize}
Associated to every $s$-numbers sequence, we can consider the spaces
\[
\mathcal{L}_{p,q}^{(s)}(X,Y)=\{T\in \mathcal{L}(X,Y): \|T\|_{p,q}=\left(\sum_{n=1}^{\infty}(n^{\frac{1}{p}-\frac{1}{q}}s_n(T))^q\right)^{1/q}<\infty\}.
\]
and we can ask under which conditions these spaces are pairwise distinct. In the case of approximation numbers, we have already (partially) solved the problem, with an elementary proof, by using that these spaces can be identified as interpolation spaces. Unfortunately, for arbitrary $s$-numbers, formula \eqref{opersnumb}   does not hold in general and we can't proceed in the same way.

Indeed, K\"{o}nig has studied the relationship that exists between the operator ideals and interpolation.  Concretely, in \cite[Theorem 1]{K} he proved that, for any $s$-numbers sequence,  
    \begin{equation}\label{opersnumb2}    (\mathcal{L}_{p_0,p_0}^{(s)}(X,Y),\mathcal{L}_{p_1,p_1}^{(s)}(X,Y))_{\theta,q}\subseteq \mathcal{L}_{p,q}^{(s)}(X,Y),\quad \frac{1}{p}=\frac{1-\theta}{p_0}+\frac{\theta}{p_1}
\end{equation}
and he also proved that there are $s$-numbers sequences such that equality does not hold in \eqref{opersnumb2}. In particular, equality does not hold for the operator ideal defined with Gelfand numbers, $\mathcal{L}_{p,q}^{(c)}(X,Y)$. Here for an operator $T\in \mathcal{L}(X,Y)$, the $n$-th  \textit{Gelfand numbers} are defined by  $$ c_n(T):=a_n(J_YT)$$ where $J_Y$ is the canonical injection from  $Y$ into $Y^{inj}:=\ell^{\infty}(B_{Y^*})$, where $Y^*$ denotes the dual space of $Y$ and $B_{Y^*}=\{f\in Y^*:\|f\|\leq 1\}$ is its unit ball.
 In the following the main idea of the proof of this claim is given.\\
 In \cite[Lemma 14.2.14]{POI} is shown that, if $(\sigma_n)$ is a non-increasing sequence of non-negative numbers and $(\sigma_n)\in \ell^p(\mathbb{N})$ for a certain $0<p<\infty$, then $$\lim_{n\to\infty}n^{1/p}\sigma_n=0.$$ Thus if $0<p<\infty$ and $T\in \mathcal{L}_{p,p}^{(s)}(X,Y)$, then $\lim_{n\to\infty}n^{1/p}s_n(T)=0$. 
If we apply this to the sequence of Gelfand numbers $c_n(T)$ and we take into account that $a_n(T)\leq (2n)^{\frac{1}{2}}c_n(T)$ holds for $T\in \mathcal{L}(X,Y)$ for arbitrary Banach spaces $X,Y$, we get that $T\in \mathcal{L}_{2,2}^{(c)}(X,Y)$ implies $0\leq \lim_{n\to \infty}a_n(T)\leq \lim_{n\to \infty}(2n)^{1/2}c_n(T)=0$ so that  
\[
\mathcal{L}_{2,2}^{(c)}(X,Y)\subseteq \mathcal{L}^{\mathcal{A}}(X,Y).
\]
Here $\mathcal{L}^{\mathcal{A}}(X,Y)$ denotes the approximable operators $T:X\to Y$  (i.e., $\mathcal{L}^{\mathcal{A}}(X,Y)$ is the closure in $\mathcal{L}(X,Y)$ of
$\mathcal{F}(X,Y)=\{T\in \mathcal{L}(X,Y): \text{\rm{rank}}(T)<\infty\}$, which is complete with respect to the norm of $\mathcal{L}(X,Y)$.)
On the other hand, in \cite[Remark after Lemma 14.2.14]{POI} is stated that, as a consequence of Enflo's counterexample to the approximation property on Banach spaces, there exists a Banach space $E_0$ (the so-called Enflo's space) such that, if $p>2$, then 
\begin{equation} \label{Enf}
  \mathcal{L}_{p,p}^{(c)}(E_0)\not\subseteq \mathcal{L}^{\mathcal{A}}(E_0)  
\end{equation}
(see \cite{Ku} for a proof of this claim). Thus, if we could put an equality in formula \eqref{opersnumb2}, we would have that, for $2<p,q<\infty$ such that $\frac{1}{p}=\frac{1-\theta}{2}+\frac{\theta}{q}$, $0<\theta<1$, 
\begin{equation}\label{Asuman}
\mathcal{L}_{p,p}^{(c)}(E_0)=(\mathcal{L}_{2,2}^{(c)}(E_0),\mathcal{L}_{q,q}^{(c)}(E_0))_{\theta,p}\subseteq \mathcal{L}^{\mathcal{A}}(E_0) 
\end{equation}
which contradicts \eqref{Enf}. Note that last inclusion in  \eqref{Asuman} follows from the fact that 
$\mathcal{L}_{2}^{(c)}(E_0)$ is dense in $(\mathcal{L}_{2}^{(c)}(E_0),\mathcal{L}_{q}^{(c)}(E_0))_{\theta,p}$, so that if we take $T\in (\mathcal{L}_{2}^{(c)}(E_0),\mathcal{L}_{q}^{(c)}(E_0))_{\theta,p}$, we can approximate this operator in the quasi-norm of the interpolation space as well as in the norm of $\mathcal{L}_{2}^{(c)}(E_0)$, which is embedded into  $\mathcal{L}^{\mathcal{A}}(E_0)$ so that $T$ is approximable. This proves the claim. 

In his book \cite{Kbook}, K\"{o}nig also studied the relationship between interpolation and other types of operator ideals, and in all cases except for $\mathcal{L}_{p,q}^{(a)}$, the general situation is that only one inclusion holds. 
For example, it is known that \cite[p. 123]{Kbook}:
\[
N_p(X,Y)\subseteq (N_{p_0}(X,Y),N_{p_1}(X,Y))_{\theta,p}\,\,
\]
for $0<p_0<p_1\leq 1$ $0<\theta<1$, $\frac{1}{p}=\frac{1-\theta}{p_0}+\frac{\theta}{p_1}$, 
and \cite[p. 126]{Kbook}:
\[
(\Pi_{p_0,r}(X,Y),\Pi_{p_1,r}(X,Y))_{\theta,p}\subseteq \Pi_{p,r}(X,Y)
\]
for $1\leq r<p_0<p_1<\infty$ $0<\theta<1$, $\frac{1}{p}=\frac{1-\theta}{p_0}+\frac{\theta}{p_1}$,
where by $N_p(X,Y)$ we mean $p$-nuclear operators, and $\Pi_{p,q}(X,Y)$ denotes $(p,q)$-summing operators.

The relationships between these different operator ideals are studied in full detail in \cite{POI} and \cite{K}. In the following we  mention a strict inclusion between operator ideal generated by Gelfand numbers and $(p,q)$-summing maps.  It is known that in case of Hilbert space we have the following inclusion $$ \mathcal{L}_{2p/q,p}^{(a)} (\ell_2,\ell_2) \subseteq \Pi_{p,q}(\ell_2,\ell_2)$$ for $2< q<p< \infty$ (see \cite{B}). This result was generalized for the operators acting between Banach spaces, in particular it was proven in \cite{F} that
$$\mathcal{L}_{r,p}^{(c)} (X,Y) \subseteq \Pi_{p,q}(X,Y)$$ 
for $1\leq q<p<\infty$ and $1/r:=1/p\, \max(q/2, 1)$. Moreover in the same paper it was mentioned that these inclusions are strict.  This follows from the fact that (see \cite{POI}, section 6.5.4, page 99) the embedding map $I: \ell_1 \to \ell_2$ is absolutely $(1,1)$-summing. Therefore $$I\in \Pi_{p,q}(\ell_1, \ell_2)\quad\mbox{for} \quad 1 \leq q\leq p < \infty.$$ On the other hand $I$ is not compact, so that $$I \notin \mathcal{L}_{r,w}^{(c)} (\ell_1,\ell_2)\quad \mbox{for}\quad 0<r, \,\, w< \infty.$$

In his book on the history of Banach spaces and linear operators, when dealing with interpolation of operator ideals, Pietsch commented (see \cite[p. 436]{Phbs}):
``Apart from $\mathcal{L}_{p,p}^{(a)}$, I do not know any other $1$-parameter scale of ideals $\frak{U}_p$ that could be reproduced by real or complex interpolation: $(\frak{U}_{p_0},\frak{U}_{p_0})_{\theta, p}= \frak{U}_{p}$ or $[\frak{U}_{p_0},\frak{U}_{p_0}]_{\theta}= \frak{U}_{p}$''
And  this is still the situation almost two decades later. Thus, the approach of using interpolation spaces to face the analogous of Lions problem with the operator ideals $\mathcal{L}_{p,q}^{(s)}$ seems to be adequate only for  $\mathcal{L}_{p,q}^{(a)}$.

On the other hand, using stronger tools, in some cases we know that the spaces $\mathcal{L}_{p,q}^{(s)}(X,Y)$ are pairwise distinct. For example, it follows from \cite[Theorem 1.6]{O} that, if the Banach spaces $X$ and $Y$ have no non-trivial cotype and no non-trivial type, res\-pectively, then for every non-increasing sequence $(\varepsilon_n)\in c_0(\mathbb{N})$ there is $T\in\mathcal{L}(X,Y)$ such that 
$$\frac{1}{50}\varepsilon_n\leq c_n(T)\leq 4\varepsilon_{[4n/5]},$$ and this implies that the spaces $\mathcal{L}_{p,q}^{(c)}(X,Y)$ are pairwise distinct. Indeed, if we take 
$\varepsilon_n=n^{-1/p}(1+\log n)^{-1/q}$ we have that $(n^{\frac{1}{p}-\frac{1}{q}}\varepsilon_n)^q=n^{-1}(1+\log n)^{-1}\not\in \ell^1(\mathbb{N})$ since $\int_2^{\infty}\frac{dx}{x\log x}=\int_{\log 2}^{\infty}\frac{du}{u}=\infty$, 
and, for $q^*>q$, 
$(n^{\frac{1}{p}-\frac{1}{q^*}}\varepsilon_n)^{q^*}=n^{-1}(1+\log n)^{-q^*/q}\in \ell^1(\mathbb{N})$ since 
 $\int_2^{\infty}\frac{dx}{x(\log x)^{q^*/q}}=\int_{\log 2}^{\infty}\frac{du}{u^{q^*/q}}<\infty$.  Now, $(\varepsilon_n)\asymp (\varepsilon_{[4n/5]})$ and henceforth, if $T:X\to Y$ satisfies $\frac{1}{50}\varepsilon_n\leq c_n(T)\leq 4\varepsilon_{[4n/5]}$ then $T\in \mathcal{L}_{p,q^*}^{(c)}(X,Y)$ for all $q^*>q$ and $T\not \in \mathcal{L}_{p,q}^{(c)}(X,Y)$. Moreover, if $p^*>p$, 
 $(n^{\frac{1}{p^*}-\frac{1}{q^*}}\varepsilon_n)^{q^*}=n^{-(q^*(\frac{1}{p}-\frac{1}{p^*})+1)}(1+\log n)^{-q^*/q}\in \ell^1(\mathbb{N})$ and 
 we also have that $T \in \mathcal{L}_{p^*,q^*}^{(c)}(X,Y)$ for every $q^*$. Hence the spaces $\mathcal{L}_{p,q}^{(c)}(X,Y)$ are pairwise distinct.

If we renounce to solve Lions problem for $\mathcal{L}_{p,q}^{(s)}(X,Y)$ in its full generality, we can at least face a simpler question using BCT. Concretely, we can study under which conditions $\mathcal{L}_{p,q}^{(s)}(X,Y) \hookrightarrow \mathcal{L}(X,Y)$ with strict inclusion, a property that  holds true if we can guarantee that the $s$-numbers sequence $s_n(T)$ decays slowly to zero. It is with this objective in mind that we prove the following lethargy property of s-numbers.

\begin{lem}\label{sucesiones} Given $\{\varepsilon_n\}\searrow   0$ and $\{h(n)\}$ an increasing sequence of natural numbers satisfying  $n\leq h(n)$ for all $n$, there exists a sequence
$\{\xi_n\}\searrow   0$ such that $\varepsilon_n\leq \xi_n$ and $\xi_n\leq 2\xi_{h(n)}$ for all $n$.
\end{lem}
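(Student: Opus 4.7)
The plan is to construct $\xi$ as a step function adapted to the iteration of $h$: partition $\mathbb{N}$ into blocks $[t_k,t_{k+1})$ chosen so that a single application of $h$ can never skip more than one block, and on the $k$-th block assign $\xi$ a constant value $w_k$ chosen so that $w_k$ dominates $\varepsilon_{t_k}$ and satisfies $w_k\le 2w_{k+1}$. First I would set $t_0=1$ and $t_{k+1}=h(t_k)+1$. Since $h(n)\ge n$, the sequence $(t_k)$ is strictly increasing and unbounded, and the defining relation $h(t_k)=t_{k+1}-1$, combined with the monotonicity of $h$, yields the key spacing property: for every $n\in[t_k,t_{k+1})$ one has $h(n)\le h(t_{k+1}-1)\le h(t_{k+1})=t_{k+2}-1$, so $h(n)$ lands in at most the next block.

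Next I would define the values by the recursion $w_0=\varepsilon_1$ and $w_k=\max(\varepsilon_{t_k},w_{k-1}/2)$ for $k\ge 1$, and then declare $\xi_n=w_k$ for $t_k\le n<t_{k+1}$. This recursion is engineered so that $w_k\ge w_{k-1}/2$ for free, while also $w_k\ge\varepsilon_{t_k}\ge\varepsilon_n$ on the $k$-th block. A short induction using $\varepsilon_{t_k}\le\varepsilon_{t_{k-1}}\le w_{k-1}$ shows that $(w_k)$ is non-increasing. Unfolding the max recursion yields $w_k\le\max_{0\le j\le k}2^{-(k-j)}\varepsilon_{t_j}$; splitting this maximum at a large index $J$ gives the bound $w_k\le\max(2^{-(k-J)}\varepsilon_1,\varepsilon_{t_{J+1}})$, which tends to zero as $k\to\infty$ because $\varepsilon_{t_j}\to 0$.

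Verification of the slow-decay bound $\xi_n\le 2\xi_{h(n)}$ is then immediate from the block structure: for $n\in[t_k,t_{k+1})$ the spacing property places $h(n)\in[t_k,t_{k+2})$, so $\xi_{h(n)}\in\{w_k,w_{k+1}\}$ and hence $\xi_{h(n)}\ge w_{k+1}\ge w_k/2=\xi_n/2$. The main obstacle is reconciling two competing requirements: the blocks must be large with respect to $h$ so that $\xi$ cannot collapse by more than a factor of two over a single $h$-step, yet $\xi$ must still decrease to zero and dominate the given $\varepsilon$. The recipe $w_k=\max(\varepsilon_{t_k},w_{k-1}/2)$ handles this tension cleanly, since the first argument keeps $\xi$ above $\varepsilon$ while the second guarantees geometric decrease whenever $\varepsilon_{t_k}$ is already small.
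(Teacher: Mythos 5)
Your construction is correct: the blocks $[t_k,t_{k+1})$ with $t_{k+1}=h(t_k)+1$ do confine $h(n)$ to at most the next block, the recursion $w_k=\max(\varepsilon_{t_k},w_{k-1}/2)$ gives monotonicity, domination of $\varepsilon$, convergence to zero, and the factor-two bound exactly as you argue. The paper itself gives no proof here --- it only cites \cite[Lemma 2.3]{AO} --- and your argument is essentially the standard block construction used in that reference, so you have in effect supplied the self-contained proof the paper omits.
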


\begin{proof} See \cite[Lemma 2.3]{AO}.

\end{proof}

\begin{thm}\label{let} 
Given $X,Y$ Banach spaces, the following are equivalent claims:
\begin{itemize}
\item[$(a)$]  $s_n(S(\mathcal{L}^{\mathcal{A}}(X,Y)))=\sup_{\|T\|=1}s_n(T)>c$ for all  $n\in\mathbb{N}$ and a certain constant $c>0$.
\item[$(b)$] For every non-increasing sequence $\{\varepsilon_n\}\in c_0$, there are approximable operators $T\in \mathcal{L}^{\mathcal{A}}(X,Y)$ such that 
\[
s_n(T)\neq \mathbf{O}(\varepsilon_n).
\]
\end{itemize}
\end{thm}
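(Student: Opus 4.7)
The plan is to mimic the proof of Theorem \ref{uno1}, with the $K$-functional replaced by the $s$-number sequence and the sum $A_0+A_1$ replaced by the Banach space $\mathcal{L}^{\mathcal{A}}(X,Y)$ (which is complete in the operator norm). The direction $(b)\Rightarrow(a)$ is the easy one: if (a) fails, then $\varepsilon_n:=\sup\{s_n(T):T\in \mathcal{L}^{\mathcal{A}}(X,Y),\,\|T\|=1\}$ is a non-increasing null sequence, and the homogeneity $s_n(\lambda T)=|\lambda|s_n(T)$ (which follows from the multiplicativity axiom applied to $\lambda\,\mathrm{id}_X$) forces $s_n(T)\leq\|T\|\varepsilon_n$ for every approximable $T$, in contradiction with (b).

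For $(a)\Rightarrow(b)$ I would argue by contradiction. Assume $\{\varepsilon_n\}\searrow 0$ is such that $s_n(T)=\mathbf{O}(\varepsilon_n)$ for every $T\in\mathcal{L}^{\mathcal{A}}(X,Y)$. First invoke Lemma \ref{sucesiones} with $h(n)=2n$ to produce a non-increasing null sequence $\xi_n\geq\varepsilon_n$ satisfying $\xi_n\leq 2\xi_{2n}$, and define
\[
\Gamma_m:=\{T\in \mathcal{L}^{\mathcal{A}}(X,Y):s_n(T)\leq m\xi_n\text{ for all }n\in\mathbb{N}\},
\]
so that $\mathcal{L}^{\mathcal{A}}(X,Y)=\bigcup_{m\geq 1}\Gamma_m$. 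Each $\Gamma_m$ is closed, because specializing the additivity axiom to $k=1$ yields the Lipschitz estimate $|s_n(T)-s_n(S)|\leq \|T-S\|$. Since $\mathcal{L}^{\mathcal{A}}(X,Y)$ is Banach, BCT furnishes $m_0$, $T_0\in\Gamma_{m_0}$ and $r>0$ with $B(T_0,r)\subseteq \Gamma_{m_0}$, and by symmetry ($s_n(-T)=s_n(T)$) also $B(-T_0,r)\subseteq \Gamma_{m_0}$.

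The key step is then to show that $\tfrac{1}{2}(T_1+T_2)\in \Gamma_{2m_0}$ for any $T_1,T_2\in\Gamma_{m_0}$. Applying the additivity axiom with index pairs $(n,n)$ and $(n+1,n)$ gives
\[
s_{2n-1}\bigl(\tfrac{T_1+T_2}{2}\bigr)\leq m_0\xi_n \quad\text{and}\quad s_{2n}\bigl(\tfrac{T_1+T_2}{2}\bigr)\leq m_0\xi_n,
\]
which, combined with $\xi_n\leq 2\xi_{2n}\leq 2\xi_{2n-1}$ (from Lemma \ref{sucesiones} and the monotonicity of $\xi$), yields $s_k\bigl(\tfrac{T_1+T_2}{2}\bigr)\leq 2m_0\xi_k$ for every $k$. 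The identity $S=\tfrac{1}{2}((T_0+S)+(-T_0+S))$ therefore places $B(0,r)\subseteq \Gamma_{2m_0}$; homogeneity propagates this to $s_n(T)\leq (2m_0/r)\|T\|\xi_n$ for every approximable $T$, and using (a) to pick $T_n\in S(\mathcal{L}^{\mathcal{A}}(X,Y))$ with $s_n(T_n)>c$ gives $c<(2m_0/r)\xi_n\to 0$, the desired contradiction. The main obstacle is the index shift built into the $s$-number additivity axiom, which makes $s_k$ of a sum controllable only by $s$-numbers of the summands at index roughly $k/2$; Lemma \ref{sucesiones} is exactly the device needed to absorb this shift by replacing $\varepsilon$ with a regularized majorant $\xi$ whose values at $n$ and $2n$ differ by at most a factor $2$.
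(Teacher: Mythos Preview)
Your proof is correct and follows essentially the same Baire-category argument as the paper: regularize $\{\varepsilon_n\}$ via Lemma~\ref{sucesiones}, write $\mathcal{L}^{\mathcal{A}}(X,Y)=\bigcup_m\Gamma_m$, use BCT plus symmetry and the additivity axiom $s_{n+m-1}(T+S)\le s_n(T)+s_m(S)$ to push the interior ball to a ball at the origin (in $\Gamma_{2m_0}$), and finish by homogeneity. The only cosmetic differences are that the paper takes $h(n)=2n+1$ and handles the index shift via a single monotonicity step $s_j\le s_{2n-1}$ for $2n-1\le j<2n+1$, whereas you take $h(n)=2n$ and treat even and odd indices separately; both routes work.
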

\begin{proof} $(a)\Rightarrow (b)$.  Let us assume, on the contrary, that $s_n(T)= \mathbf{O}(\varepsilon_n)$ for all $T\in \mathcal{L}^{\mathcal{A}}(X,Y)$ and certain sequence $\{\varepsilon_n\}\in c_0$. Thanks to Lemma \ref{sucesiones} we may assume, with no loss of generality, that $\varepsilon_n\leq 2\varepsilon_{2n+1}$ for all $n$. 

Clearly, $s_n(T)= \mathbf{O}(\varepsilon_n)$ can be reformulated as $$\mathcal{L}^{\mathcal{A}}(X,Y)=\bigcup_{m=1}^{\infty} \Gamma_m,$$ where $$\Gamma_m=\{T\in \mathcal{L}^{\mathcal{A}}(X,Y): s_n(T)\leq m\varepsilon_n \text{ for all }n\in\mathbb{N}\}$$
Now, $\Gamma_m$ is a closed subset of $\mathcal{L}^{\mathcal{A}}(X,Y)$ for all $m$ and the Baire category theorem implies that $\Gamma_{m_0}$ has nonempty interior for some $m_0\in\mathbb{N}$. On the other hand, $\Gamma_m=-\Gamma_m$ since $s_n(T)=s_n(-T)$ for all $T\in \mathcal{L}^{\mathcal{A}}(X,Y)$.  
Furthermore,
\[
\textbf{conv}(\Gamma_m)\subseteq \Gamma_{2m},
\]
since, if $T,S\in \Gamma_m$ and $\lambda\in [0,1]$, then 
\begin{eqnarray*}
s_{2n-1}(\lambda T +(1-\lambda)S) &\leq&  s_n(\lambda T)+ s_n( (1-\lambda)S)\\
&\leq &   \lambda s_n(T)+ (1-\lambda) s_n(S) \\
&\leq&  \lambda  m\varepsilon_n+(1-\lambda) m\varepsilon_n \\
&=&  m\varepsilon_n 
\end{eqnarray*} 
Now, given $j\in\mathbb{N}$, there is $n$ such that $2n-1\leq j<2n+1$, so that 
\[
s_{j}(\lambda T +(1-\lambda)S)\leq s_{2n-1}(\lambda T +(1-\lambda)S)\leq m\varepsilon_n \leq 2m\varepsilon_{2n+1} \leq 2m\varepsilon_j.
\]
Thus, if $B_{\mathcal{L}^{\mathcal{A}}(X,Y)}(T_0,r)=\{T\in \mathcal{L}^{\mathcal{A}}(X,Y):\|T_0-T\|<r\}\subseteq \Gamma_{m_{0}}$, then $\textbf{conv}(B_{\mathcal{L}^{\mathcal{A}}(X,Y)}(T_0,r)\cup B_{\mathcal{L}^{\mathcal{A}}(X,Y)}(-T_0,r))\subseteq \Gamma_{2m_0}$. In particular, 
\[
\frac{1}{2}(B_{\mathcal{L}^{\mathcal{A}}(X,Y)}(T_0,r)+B_{\mathcal{L}^{\mathcal{A}}(X,Y)}(-T_0,r))\subseteq \Gamma_{2m_0}.
\]
Now, it is clear that 
\[
B_{\mathcal{L}^{\mathcal{A}}(X,Y)}(0,r)\subseteq \frac{1}{2}(B_{\mathcal{L}^{\mathcal{A}}(X,Y)}(T_0,r)+B_{\mathcal{L}^{\mathcal{A}}(X,Y)}(-T_0,r)),
\]
since, if $T\in B_{\mathcal{L}^{\mathcal{A}}(X,Y)}(0,r)$, then $\|-T_0+ (T+T_0)\|=\|T_0+(T-T_0)\|=\|T\|\leq r$, so that  $T+T_0\in B_{\mathcal{L}^{\mathcal{A}}(X,Y)}(T_0,r)$, $T-T_0\in B_{\mathcal{L}^{\mathcal{A}}(X,Y)}(-T_0,r)$, and $T=\frac{1}{2}((T-T_0)+(T+T_0))$. Thus, 
\[
B_{\mathcal{L}^{\mathcal{A}}(X,Y)}(0,r)\subseteq  \Gamma_{2m_0}.
\]
This means that, if $T\in \mathcal{L}^{\mathcal{A}}(X,Y)\setminus \{0\}$, then 
\[
s_n(T)\leq 2m_0\varepsilon_n \text{ for all } n=1,2,\cdots .
\]
Hence 
\[
s_n(T)= s_n(\frac{\|T\|}{r}\frac{rT}{\|T\|}) \leq \frac{\|T\|}{r}2m_0\varepsilon_n \text{ for all } n=1,2,\cdots .
\]
On the other hand, $(a)$ implies that, for each $n$, there is $T_n\in S(\mathcal{L}^{\mathcal{A}}(X,Y))$ such that $$s_n(T_n)>c,$$ so that 
\[
c< s_n(T_n)\leq  \frac{1}{r}2m_0\varepsilon_n \text{ for all } n=1,2,\cdots,
\] 
which is impossible, since $\varepsilon_n$ converges to $0$ and $c>0$. This proves $(a)\Rightarrow (b)$. 

Let us demonstrate the other implication. Assume that $(a)$ does not hold. Then $$\{c_n=s_n(S(\mathcal{L}^{\mathcal{A}}(X,Y)))\}\in c_0.$$
In particular, if $T\in \mathcal{L}^{\mathcal{A}}(X,Y)$, $T\neq 0$, then
\[
s_n(\frac{T}{\|T\|})\leq s_n(S(\mathcal{L}^{\mathcal{A}}(X,Y)))=c_n \text{ for all } n\in\mathbb{N},
\]
so that 
\[
s_n(T) \leq \|T\| s_n(\frac{T}{\|T\|})\leq \|T\| c_n \text{ for all } n\in\mathbb{N},
\]
and $s_n(T)=\mathbf{O}(c_n)$ for all $x\in X$. This proves $(b)\Rightarrow (a)$. 
\end{proof}

As is well-known (and easy to prove, see \cite[p. 83]{Peigem}) the approximation numbers $a_n(T)$ satisfy $a_n(T)\geq s_n(T)$ for all $n$ and any $s$-numbers sequence. Hence they are the most natural $s$-numbers sequence to satisfy a lethargy result like Theorem \ref{let}. Indeed, in \cite{AO}, Corollary 6.23 and Theorem 3.4, is proved that, for arbitrary (infinite dimensional) Banach spaces $X,Y$ and for any decreasing sequence $(\varepsilon_n)\in c_0$ there are approximable operators $T\in \mathcal{L}(X,Y)$ satisfying $a_n(T)\geq \varepsilon_n$ for all $n=1,2,\cdots$.  In particular, this implies that $\mathcal{L}_{p,q}^{(a)}(X,Y)\hookrightarrow\mathcal{L}(X,Y)$ with strict inclusion as soon as $X,Y$ are infinite dimensional.

The opposite extreme to approximation numbers is attained by Hilbert numbers $$h_n(T)=\inf\{a_n(vTu): u\in \mathcal{L}(\ell^2(\mathbb{N}),X), v\in \mathcal{L}(Y,\ell^2(\mathbb{N})), \|u\|\|v\|\leq 1\}$$ 
since they are known to be smaller than any other $s$-numbers sequence: $h_n(T)\leq s_n(T)$ for all $T$ and $n$ \cite[p. 96]{Peigem}. Thus, it is not strange that there exist Banach spaces $X$, $Y$ such that the sequence $h_n(T)$ does not decrease slowly (see, e.g., \cite[Proposition 1.2]{O}, where 
$\lim_{n\to\infty}nh_n(T)=0$ is proved for any 
$T\in \mathcal{L}(c_0,\ell^1)$).

The following result gives a general condition on the spaces $X,Y$ to guarantee that every $s$-numbers sequence $s_n(T)$ decays slowly to zero for approximable operators $T:X\to Y$:

\begin{thm} \label{coco}
    Assume that $X,Y$ admit uniformly complemented copies of $\ell^2_n$ for each $n$. Then for every sequence $\{\varepsilon_n\} \in  c_0$ there exists an approximable operator $T\in\mathcal{L}^{\mathcal{A}}(X,Y)$ such that $s_n(T)\neq\mathbf{O}(\varepsilon_n)$
\end{thm}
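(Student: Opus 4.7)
The plan is to reduce the statement to condition $(a)$ of Theorem \ref{let}: it suffices to exhibit a constant $c>0$ such that
\[
\sup_{T\in\mathcal{L}^{\mathcal{A}}(X,Y),\;\|T\|=1} s_n(T)\geq c
\]
for every $n\in\mathbb{N}$. Once this is established, the equivalence in Theorem \ref{let} gives, for each prescribed $\{\varepsilon_n\}\in c_0$, an approximable operator $T$ with $s_n(T)\neq \mathbf{O}(\varepsilon_n)$, which is exactly what is claimed.

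To verify the uniform lower bound I would use the two defining properties of $s$-numbers that make $\ell^2_n$ play a distinguished role: the normalization $s_n(1_{\ell_2^n})=1$, and the ideal property $s_n(ASB)\leq \|A\|\, s_n(S)\,\|B\|$ (which is an immediate consequence of the multiplicativity axiom with $m=1$). By hypothesis, there exist a constant $C\geq 1$ and, for every $n$, operators $u_n^X:\ell_2^n\to X$, $v_n^X:X\to \ell_2^n$, $u_n^Y:\ell_2^n\to Y$, $v_n^Y:Y\to \ell_2^n$ satisfying
\[
v_n^X\,u_n^X=1_{\ell_2^n},\qquad v_n^Y\,u_n^Y=1_{\ell_2^n},\qquad \|u_n^X\|\|v_n^X\|\leq C,\quad \|u_n^Y\|\|v_n^Y\|\leq C.
\]
Define $T_n:=u_n^Y\,v_n^X\in\mathcal{L}(X,Y)$. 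Since $\mathrm{rank}(T_n)\leq n$, the operator $T_n$ lies in $\mathcal{F}(X,Y)\subseteq \mathcal{L}^{\mathcal{A}}(X,Y)$.

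The factorization $1_{\ell_2^n}=v_n^Y\,T_n\,u_n^X$ together with the ideal property yields
\[
1=s_n(1_{\ell_2^n})\leq \|v_n^Y\|\,s_n(T_n)\,\|u_n^X\|,
\]
so $s_n(T_n)\geq \|v_n^Y\|^{-1}\|u_n^X\|^{-1}$. Combining this with $\|T_n\|\leq \|u_n^Y\|\,\|v_n^X\|$, the normalized operator $\widetilde T_n:=T_n/\|T_n\|$ satisfies
\[
s_n(\widetilde T_n)=\frac{s_n(T_n)}{\|T_n\|}\geq \frac{1}{\|u_n^X\|\|v_n^X\|\,\|u_n^Y\|\|v_n^Y\|}\geq \frac{1}{C^2}.
\]
Hence $s_n\bigl(S(\mathcal{L}^{\mathcal{A}}(X,Y))\bigr)\geq C^{-2}$ for every $n$, condition $(a)$ of Theorem \ref{let} holds with $c=C^{-2}$, and the theorem follows.

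The substantive content sits entirely in the uniform complementation hypothesis; the rest is a short computation with the $s$-number axioms. No analytic difficulty is expected, since approximability of the constructed $T_n$ is automatic from their finite rank, and the Baire-type argument has already been absorbed into Theorem \ref{let}.
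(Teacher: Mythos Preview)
Your proof is correct and follows essentially the same strategy as the paper: construct finite-rank operators $T_n\in\mathcal{L}^{\mathcal{A}}(X,Y)$ through which the identity on $\ell_2^n$ factors, use the ideal property and the normalization $s_n(1_{\ell_2^n})=1$ to bound $s_n(T_n)$ from below uniformly in $n$, and then invoke Theorem~\ref{let}(a). Your presentation is in fact a bit more streamlined: the paper routes the argument through intermediate subspaces $E_n\subset X$, $F_n\subset Y$ that are $(3/2)$-isomorphic to $\ell_2^n$ (mentioning Dvoretzky's theorem along the way) and an auxiliary map $\rho_n:E_n\to F_n$, whereas you encode the uniform complementation directly via the factorization maps $u_n^X,v_n^X,u_n^Y,v_n^Y$ and build $T_n=u_n^Y v_n^X$ in one step.
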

\begin{proof}
Recall that a space $E$ is $c$-isomorphic to $\ell_n^2$ if there exists a linear isomorphism $u:E\to \ell_n^2$ such that $\max\{\|u\|,\|u^{-1}\|\}\leq c$. This property is usually denoted by $E\sim^c \ell_n^2$. A well known theorem by Dvoretzky states that for every natural number $n$ and every $\varepsilon>0$, there exists $N =N(n,\varepsilon)$ such that, if $Z$  is a Banach 
space of dimension $N$, there exists a subspace of $Z$ which is $(1+\varepsilon)$-isomorphic to $\ell_n^2$. We apply this theorem with $\varepsilon=1/2$.

Thus, we can assume that $P_n:X\to X$, $Q_n:Y\to Y$ are projections such that $\|P_n\|,\|Q_n\|\leq C$, $E_n=P_n(X)$, $F_n=Q_n(Y)$ satisfy $E_n,F_n\sim^{1+1/2}\ell^2_n$, since $X,Y$ admit uniformly complemented copies of $\ell^2_n$. Let $i_n:E_n\to X$, $j_n:F_n\to Y$ be the natural inclusions. Then $\widetilde{P}_n:X\to E_n$, $\widetilde{P}_n(x)=P_n(x)$ and $\widetilde{Q}_n:Y\to F_n$, $\widetilde{Q}_n(y)=Q_n(y)$ are well defined and satisfy $\|\widetilde{P}_n\|,\|\widetilde{Q}_n\|\leq C$. 
There are invertible operators $u_n:E_n\to \ell^2_n$ and $v_n:F_n\to \ell^2_n$ such that $\max\{\|u_n\|,\|u_n^{-1}\|,\|v_n\|,\|v_n^{-1}\|\}\leq \frac{3}{2}$. Thus
\begin{eqnarray*}
1\leq s_n(1_{\ell^2_n}) &=& s_n(v_n\circ v_n^{-1}\circ u_n\circ u_n^{-1})\\
&=& s_n(v_n\circ \rho_n\circ u_n^{-1})\leq \|v_n\|s_n( \rho_n)\| u_n^{-1}\|\leq \left(\frac{3}{2}\right)^2 s_n( \rho_n)
\end{eqnarray*}
where $\rho_n=v_n^{-1}\circ u_n:E_n\to F_n$. 
Let us now consider the operator $T_n=j_n\circ \rho_n\circ \widetilde{P}_n:X\to Y$. Then: $$\rho_n=\widetilde{Q}_n\circ T_n\circ i_n$$
Hence
\[
\frac{4}{9}\leq s_n(\rho_n)=s_n(\widetilde{Q}_n\circ T_n\circ i_n)\leq \|\widetilde{Q}_n\|s_n(T_n)\|i_n\|
\]
and 
\[
s_n(T_n)\geq \frac{4}{9C}
\]
Note that $\|T_n\|$ is uniformly bounded by a certain constant $K>0$, by construction. Hence $s_n(\frac{T_n}{\|T_n\|})\geq \frac{4}{9CK}$.  
\end{proof}

\begin{cor} \label{coco1}
Assume that $X,Y$ admit uniformly complemented copies of $\ell^2_n$ for each $n$. Then    $\mathcal{L}_{p,q}^{(s)}(X,Y) \hookrightarrow \mathcal{L}^{\mathcal{A}}(X,Y)$ with strict inclusion for all $0<p<q\leq\infty$ and for every $s$-numbers sequence $\{s_n\}$. 
\end{cor}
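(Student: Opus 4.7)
My plan is to combine the lethargy principle of Theorem \ref{coco} with the standard decay estimate for a Lorentz-type ideal. The decay step reads: every $T\in\mathcal{L}^{(s)}_{p,q}(X,Y)$ satisfies $s_n(T)=\mathbf{O}(n^{-1/p})$. For $q=\infty$ this is immediate from the definition of $\|T\|_{p,\infty}$, and for $0<q<\infty$ it follows from the monotonicity of $(s_n(T))$ via
\[
(n/2)^{q/p}\,s_n(T)^q \;\le\; \sum_{k=\lceil n/2\rceil}^{n} k^{q/p-1}\,s_k(T)^q \;\le\; \sum_{k\ge\lceil n/2\rceil} k^{q/p-1}\,s_k(T)^q,
\]
whose right-hand side is the tail of the convergent series defining $\|T\|_{p,q}^q$ and therefore tends to $0$. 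One actually gets the sharper $s_n(T)=o(n^{-1/p})$, but the $\mathbf{O}$-bound is all that is needed.

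Next I would apply Theorem \ref{coco} to the non-increasing null sequence $\varepsilon_n:=n^{-1/p}$. Because $X$ and $Y$ admit uniformly complemented copies of $\ell^2_n$, that theorem supplies an operator $T_0\in\mathcal{L}^{\mathcal{A}}(X,Y)$ with $s_n(T_0)\neq\mathbf{O}(n^{-1/p})$; the decay estimate of the previous paragraph then forces $T_0\notin\mathcal{L}^{(s)}_{p,q}(X,Y)$, which is the strictness claimed.

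The main obstacle is in fact the underlying continuous embedding $\mathcal{L}^{(s)}_{p,q}(X,Y)\subseteq\mathcal{L}^{\mathcal{A}}(X,Y)$ presupposed by the statement: for arbitrary $s$-numbers, $s_n(T)\to 0$ does not by itself force $T$ to be approximable (witness Hilbert numbers on suitable couples). I would handle this by invoking, for the specific $s$-numbers under consideration, a comparison estimate between $s_n$ and $a_n$---such as the bound $a_n(T)\le(2n)^{1/2}c_n(T)$ cited earlier in the paper for Gelfand numbers---that upgrades the $o(n^{-1/p})$ decay of the first step into $a_n(T)\to 0$, and hence $T\in\mathcal{L}^{\mathcal{A}}(X,Y)$.
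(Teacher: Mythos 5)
Your first two paragraphs are exactly the intended derivation: the paper states Corollary \ref{coco1} without a separate proof, and the only argument available is the one you give --- every $T\in\mathcal{L}^{(s)}_{p,q}(X,Y)$ satisfies $s_n(T)=\mathbf{O}(n^{-1/p})$, while Theorem \ref{coco} applied to $\varepsilon_n=n^{-1/p}$ produces an approximable $T_0$ violating this bound, hence $T_0\in\mathcal{L}^{\mathcal{A}}(X,Y)\setminus\mathcal{L}^{(s)}_{p,q}(X,Y)$. Your tail-sum estimate is valid precisely because $q/p-1\ge 0$ under the standing hypothesis $p<q$.

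Your last paragraph flags a genuine imprecision in the statement, but the repair you sketch cannot work at the advertised level of generality: a comparison such as $a_n(T)\le (2n)^{1/2}c_n(T)$ is specific to Gelfand numbers and upgrades $s_n(T)=o(n^{-1/p})$ to $a_n(T)\to 0$ only when $p<2$; no analogous bound holds for an arbitrary $s$-scale (Hilbert numbers being the extreme case), and the paper's own discussion of Enflo's space shows $\mathcal{L}^{(c)}_{p,p}(E_0)\not\subseteq\mathcal{L}^{\mathcal{A}}(E_0)$ for $p>2$. The paper does not prove the forward containment either, so the corollary should be read as asserting only what Theorem \ref{coco} actually delivers: that $\mathcal{L}^{(s)}_{p,q}(X,Y)\cap\mathcal{L}^{\mathcal{A}}(X,Y)$ is a proper subset of $\mathcal{L}^{\mathcal{A}}(X,Y)$. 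Keep your first two paragraphs as the proof of that strictness and drop the attempted proof of the containment, which is false as stated for general $s$-numbers.
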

 It is well known  \cite[Theorem 19.3]{DJT} that every $B$-convex Banach space contains uniformly complemented $\ell_n^2$'s, so that we can apply Theorem \ref{coco} and Corollary \ref{coco1} to these spaces.

\section{Appendix: The $K$-interpolation method for the sum and the intersection in quasi-Banach setting}

In this appendix we demonstrate that formula \eqref{sumintersection} holds for quasi-Banach couples $(A_0,A_1)$ and $(\theta,p)\in ]0,1[\times ]0,\infty[$. The case of Banach couples is well-known (see e.g., \cite{AA,H,M0,M,P}). Unfortunately, we have been unable to find a reference for the quasi-Banach setting. Hence we include the proof in this appendix, just for the sake of completeness. Indeed, our proof follows all steps given in \cite{AA} (see also \cite{M0}), with small adaptations to include the quasi-Banach inequality of the norm, when necessary. In order to make the proof as clear as possible, we also add more details than those given in \cite{AA}. We have chosen the proof in \cite{AA} because it is a very general one, which fits with the spirit of this paper.
Thus, in all what follows, we work with interpolation spaces $(A_0,A_1)_{\Phi}=\{a\in A_0+A_1:\|K(a,t,A_0,A_1)\|_{\Phi}<\infty\}$ where $\Phi$ is a parameter space for the $K$-interpolation method, and $(A_0,A_1)$ is a quasi-Banach couple. We denote by $M_i$ the quasi-norm constant of the space $A_i$ (i.e, $\|a+b\|_{A_i}\leq M_i(\|a\|_{A_i}+\|b\|_{A_i}$), $i=0,1$.  
\begin{lem} \label{lemA1}
    If $A_1\hookrightarrow A_0$, then 
    \[
    \|x\|_{A_0+A_1}\asymp \|x\|_{A_0} \text{ \rm{ for } } x\in A_0.
    \]
\end{lem}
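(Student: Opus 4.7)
The plan is to prove both inequalities $\|x\|_{A_0+A_1} \lesssim \|x\|_{A_0}$ and $\|x\|_{A_0} \lesssim \|x\|_{A_0+A_1}$ for every $x \in A_0$, with only the second requiring the embedding hypothesis $A_1 \hookrightarrow A_0$. Recall that $\|x\|_{A_0+A_1} = K(x,1,A_0,A_1) = \inf\{\|a_0\|_{A_0} + \|a_1\|_{A_1} : x = a_0 + a_1,\ a_j \in A_j\}$.

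The easy direction is essentially immediate: for any $x \in A_0$ use the trivial decomposition $x = x + 0$ with $a_0 = x \in A_0$ and $a_1 = 0 \in A_1$, which yields $\|x\|_{A_0+A_1} \leq \|x\|_{A_0}$. Note that this bound is valid for any quasi-Banach couple, without needing $A_1 \hookrightarrow A_0$.

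For the reverse inequality, let $c>0$ be such that $\|y\|_{A_0} \leq c\|y\|_{A_1}$ for all $y \in A_1$, which exists by hypothesis. For any decomposition $x = a_0 + a_1$ with $a_0 \in A_0$, $a_1 \in A_1$, the embedding forces $a_1 \in A_0$; hence the quasi-triangle inequality in $A_0$ gives
\[
\|x\|_{A_0} \leq M_0\bigl(\|a_0\|_{A_0} + \|a_1\|_{A_0}\bigr) \leq M_0 \max(1,c)\bigl(\|a_0\|_{A_0} + \|a_1\|_{A_1}\bigr).
\]
Taking the infimum over all admissible decompositions of $x$ yields $\|x\|_{A_0} \leq M_0 \max(1,c)\,\|x\|_{A_0+A_1}$, which is the missing inequality.

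There is no real obstacle here; the argument is a direct unpacking of the definition of $K(x,1,A_0,A_1)$ combined with the quasi-triangle inequality (which is why the constant $M_0$ appears, unlike in the Banach case). The only point worth flagging is that one must not forget the factor $M_0$: this is exactly the place where quasi-Banach differs from Banach, but it is harmless because the conclusion is stated only up to the asymptotic equivalence $\asymp$.
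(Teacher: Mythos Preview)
Your proof is correct and follows essentially the same approach as the paper: both use the trivial decomposition for one direction and the quasi-triangle inequality in $A_0$ together with the embedding bound $\|a_1\|_{A_0}\leq c\|a_1\|_{A_1}$ for the other. The only cosmetic difference is that the paper fixes a near-optimal decomposition (incurring an extra factor $2$) whereas you take the infimum over all decompositions directly.
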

\begin{proof}
    By hypothesis, we have that $\|x\|_{A_0}\leq C\|x\|_{A_1}$ for all $x\in A_1$ and a certain constant $C\geq 1$. Now, given $x\in A_0$, we have that $\|x\|_{A_0+A_1}=\inf_{x=a+b}\|a\|_{A_0}+\|b\|_{A_1}\leq \|x\|_{A_0}$. Thus, we only need to demonstrate the reverse inequality. 
    By definition of $\inf$, there are $a\in A_0$, $b\in A_1$, such that $x=a+b$ and $\|a\|_{A_0}+\|b\|_{A_1}\leq 2\|x\|_{A_0+A_1}$. On the other hand, $b\in A_1$ implies that $\|b\|_{A_1}\geq \frac{1}{C}\|b\|_{A_0}$. Hence
    \begin{eqnarray*}
    \frac{1}{CM_0}\|x\|_{A_0} &=& \frac{1}{C}(\frac{1}{M_0}\|a+b\|_{A_0}) \leq \frac{1}{C}(\|a\|_{A_0}+\|b\|_{A_0})\\ &\leq& \frac{1}{C}\|a\|_{A_0}+\|b\|_{A_1}\\ 
    &\leq&  \|a\|_{A_0}+\|b\|_{A_1} \leq 2\|x\|_{A_0+A_1}.
    \end{eqnarray*}
\end{proof}
The following result is demonstrated in \cite{M0} for the Banach setting:
\begin{lem}\label{mainlema}
\[K(x,t,A_0+A_1,A_0\cap A_1)\asymp K(x,t,A_0,A_1)+tK(x,1/t,A_0,A_1) \text{ \rm{ for all }} 0<t<1
\]    
\end{lem}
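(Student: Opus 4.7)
The plan is to establish the $\asymp$ via two one-sided inequalities, each obtained by constructing an explicit decomposition that realizes one functional in terms of optimal (or nearly optimal) decompositions for the other.

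For the direction $K(x,t,A_0,A_1)+tK(x,1/t,A_0,A_1)\lesssim K(x,t,A_0+A_1,A_0\cap A_1)$, I would fix $\varepsilon>0$, take a near-optimal decomposition $x=u+v$ with $u\in A_0+A_1$, $v\in A_0\cap A_1$, and then further split $u=u_0+u_1$ ($u_i\in A_i$) with $\|u_0\|_{A_0}+\|u_1\|_{A_1}\leq(1+\varepsilon)\|u\|_{A_0+A_1}$. The ternary representation $x=u_0+u_1+v$ admits two reshufflings: $x=u_0+(u_1+v)$ is a valid $A_0/A_1$ decomposition used to bound $K(x,t,A_0,A_1)$, while $x=(u_0+v)+u_1$ bounds $K(x,1/t,A_0,A_1)$ after multiplication by $t$. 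Invoking the quasi-norm constants $M_0,M_1$ and the fact that $0<t<1$ makes the $u$-terms combine into $(M_0+M_1)\|u\|_{A_0+A_1}$ while the $v$-terms become $t(M_0+M_1)\|v\|_{A_0\cap A_1}$; passing to the infimum yields the bound.

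For the reverse direction, I would select $(1+\varepsilon)$-optimal decompositions $x=a_0+a_1$ at parameter $t$ and $x=b_0+b_1$ at parameter $1/t$. The crucial observation is that $v:=b_0-a_0=a_1-b_1$ belongs to $A_0\cap A_1$, so $u:=x-v=a_0+b_1$ is a candidate for the $A_0+A_1$ part. Directly, $\|u\|_{A_0+A_1}\leq\|a_0\|_{A_0}+\|b_1\|_{A_1}\leq(1+\varepsilon)(A+tB)$ where $A:=K(x,t,A_0,A_1)$ and $B:=K(x,1/t,A_0,A_1)$. For the intersection norm one estimates $\|v\|_{A_0}\leq M_0(\|a_0\|_{A_0}+\|b_0\|_{A_0})\lesssim B$ (using $A\leq B$) and $\|v\|_{A_1}\leq M_1(\|a_1\|_{A_1}+\|b_1\|_{A_1})\lesssim A/t+tB$, so that after multiplying by $t$ the annoying factor $A/t$ becomes just $A$ and $t^2B\leq tB$. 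Combining produces $\|u\|_{A_0+A_1}+t\|v\|_{A_0\cap A_1}\lesssim A+tB$, which is the desired bound on $K(x,t,A_0+A_1,A_0\cap A_1)$.

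The main obstacle is not conceptual but arithmetic bookkeeping: one must verify that the apparently dangerous term $\|a_1\|_{A_1}\leq A/t$ is exactly compensated by the prefactor $t$ multiplying $\|v\|_{A_0\cap A_1}$, and that the quasi-norm constants $M_0,M_1$ of $A_0,A_1$ (which replace the triangle inequality) produce only a uniform multiplicative constant independent of $t$. Once these bookkeeping points are handled, the proof proceeds exactly as in the Banach case from \cite{AA,M0}; no additional quasi-Banach machinery (Aoki--Rolewicz, $p$-norms, or completeness) is required because every step is a finite-sum estimate.
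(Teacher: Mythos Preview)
Your proof is correct and the upper bound $K(x,t,A_0+A_1,A_0\cap A_1)\lesssim K(x,t,A_0,A_1)+tK(x,1/t,A_0,A_1)$ uses exactly the paper's construction: both of you take near-optimal decompositions at parameters $t$ and $1/t$, form the element $v=b_0-a_0=a_1-b_1\in A_0\cap A_1$, and then $u=x-v=a_0+b_1$ serves as the $A_0+A_1$ component.

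The lower bound, however, is handled differently. The paper first proves an auxiliary equivalence $K(a,t,A_0+A_1,A_i)\asymp K(a,t,A_{1-i},A_i)$ for $0<t\le 1$, $i=0,1$, and then deduces
\[
\max\bigl(K(a,t,A_0,A_1),\,tK(a,1/t,A_0,A_1)\bigr)\asymp\max_i K(a,t,A_0+A_1,A_i)\le K(a,t,A_0+A_1,A_0\cap A_1).
\]
Your route is more direct: you take a near-optimal $(A_0+A_1)/(A_0\cap A_1)$ decomposition $x=u+v$, split $u=u_0+u_1$, and plug the two reshufflings $x=u_0+(u_1+v)$ and $x=(u_0+v)+u_1$ into the two $K$-functionals. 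This avoids the auxiliary lemma entirely and makes the quasi-norm bookkeeping transparent. The paper's detour has the minor advantage that the auxiliary equivalence $K(a,t,A_0+A_1,A_i)\asymp K(a,t,A_{1-i},A_i)$ is of some independent interest, but for the lemma at hand your argument is shorter.
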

\begin{proof}
    As a first step, we prove that 
    \begin{equation} \label{fund1}
       K(a,t,A_0+A_1,A_i)\asymp K(a,t,A_{1-i},A_i) \quad (a\in A_0+A_1,\ 0<t\leq 1,\ i=0,1) 
    \end{equation}
    We prove \eqref{fund1} for $i=0$ since the other case is similar. Take $a\in A_0+A_1$ and $0<t\leq 1$. There is $a_t\in A_0$ such that 
    \[
    \|a-a_t\|_{A_0+A_1}+t\|a_t\|_{A_0}<2K(a,t,A_0+A_1,A_0)
    \]
    Moreover, there exists a decomposition $a-a_t=a_{0t}+a_{1t}$ such that $a_{it}\in A_i$, $i=0,1$, and
    \[
\|a_{0t}\|_{A_0}+\|a_{0t}\|_{A_0}\leq 2\|a-a_t\|_{A_0+A_1}
    \]
    Hence $a=a_{0t}+a+a_{1t}$ satisfies
    \begin{eqnarray*}
        K(a,t,A_1,A_0) &\leq&  \|a_{1t}\|_{A_1}+t\|a_{0t}+a_{t}\|_{A_0} \\
        &\leq&  \|a_{1t}\|_{A_1}+tM_0(\|a_{0t}\|_{A_0}+\|a_{t}\|_{A_0}) \\
        &\leq&  M_0(\|a_{1t}\|_{A_1}+t\|a_{0t}\|_{A_0}+t\|a_{t}\|_{A_0}) \quad \text{ \rm{(since  }} M_0\geq 1 \text{ \rm{)}}\\
        &\leq&  M_0(\|a_{1t}\|_{A_1}+\|a_{0t}\|_{A_0}+t\|a_{t}\|_{A_0}) \quad \text{ \rm{(since  }} 0<t\leq 1 \text{ \rm{)}}\\
        &\leq&  M_0(2\|a-a_t\|_{A_0+A_1}+t\|a_{t}\|_{A_0})\\
        &\leq&  4M_0K(a,t,A_0+A_1,A_0)
    \end{eqnarray*}
     Thus,
    \[ K(a,t,A_1,A_0)\lesssim K(a,t,A_0+A_1,A_0) \quad (0<t\leq 1). 
    \]
    On the other hand, the inequality $K(a,t,A_1,A_0)\geq K(a,t,A_0+A_1,A_0)$ holds trivially. Thus, 
    \[
    K(a,t,A_1,A_0) \asymp K(a,t,A_0+A_1,A_0) \quad (0<t\leq 1). 
    \]
    Given $a\in A_0+A_1$ and $0<t\leq 1$, we have that
    \begin{eqnarray*}
&\ & \max(K(a,t,A_0,A_1),tK(a,1/t,A_0,A_1))\\
&\ & \quad \quad  = \max(K(a,t,A_1,A_0),K(a,t,A_0,A_1))\\
&\ & \quad \quad \asymp  \max(K(a,t,A_0+A_1,A_0),K(a,t,A_0+A_1,A_1))\\
&\ & \quad \quad \leq K(a,t,A_0+A_1,A_0\cap A_1)
    \end{eqnarray*}
Now, 
we can find decompositions $a=a_{0t}+a_{1t}=a_{0t}^*+a_{1t}^*$  with $a_{it},a_{it}^*\in A_i$, $i=0,1$, such that 
\[
\|a_{0t}\|_{A_0}+t\|a_{1t}\|_{A_1} \leq 2K(a,t,A_0,A_1)
\]
\[
\|a_{0t}^*\|_{A_0}+(1/t)\|a_{1t}^*\|_{A_1} \leq 2K(a,1/t,A_0,A_1)
\]
Then $a_t=a_{0t}-a_{0t}^*=a_{1t}-a_{1t}^*\in A_0\cap A_1$ and 
\begin{eqnarray*}
\|a-a_t\|_{A_0+A_1} &=& \|a_{0t}+a_{1t}^*\|_{A_0+A_1}\leq \|a_{0t}\|_{A_0}\|+a_{1t}^*\|_{A_1}\\
&\leq& 2K(a,t,A_0,A_1)+2tK(a,1/t,A_0,A_1) 
\end{eqnarray*}
Moreover, 
\begin{eqnarray*}
t\|a_t\|_{A_0\cap A_1}&=& t\max(\|a_{0t}-a_{0t}^*\|_{A_0},\|a_{1t}-a_{1t}^*\|_{A_1})\\
&\leq & \max(M_0,M_1)t\max(\|a_{0t}\|_{A_0}+\|a_{0t}^*\|_{A_0},\|a_{1t}\|_{A_1}+\|a_{1t}^*\|_{A_1})\\
&\leq& 2\max(M_0,M_1)t \max(K(a,1/t,A_0,A_1)+K(a,t,A_0,A_1),\\
&\ & \quad (1/t)K(a,t,A_0,A_1)+tK(a,1/t,A_0,A_1))\\
&\leq& 2\max(M_0,M_1)(K(a,t,A_0,A_1)+tK(a,1/t,A_0,A_1)) 
\end{eqnarray*}
since  $0<t\leq 1$. Hence 
\[
K(a,t,A_0+A_1,A_0\cap A_1)\lesssim K(a,t,A_0,A_1)+tK(a,1/t,A_0,A_1)  
\]
for $a\in A_0+A_1$ and $\ 0<t\leq 1$. This ends the proof. 
\end{proof}
\begin{lem} \label{lemA2}
    \[
    \|x\|_{(A_0,A_1)_{\Phi}} \asymp \|\chi_{(0,1)}(t)K(x,t,A_0,A_1)\|_{\Phi}+\|\chi_{(1,\infty)}(t)K(x,t,A_0,A_1)\|_{\Phi}. 
    \]
\end{lem}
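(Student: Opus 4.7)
\textbf{Proof plan for Lemma \ref{lemA2}.} The plan is to use only the two structural features of a parameter $\Phi$: it is a quasi-Banach space, and it is a lattice over $\Omega=(\mathbb{R}^+,dt/t)$. Since the singleton $\{1\}$ has $\mu$-measure zero in this measure, I would first observe the pointwise (and hence $\mu$-a.e., hence inside $\Phi$) identity
\[
K(x,t,A_0,A_1)=\chi_{(0,1)}(t)K(x,t,A_0,A_1)+\chi_{(1,\infty)}(t)K(x,t,A_0,A_1).
\]

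For the inequality $\|x\|_{(A_0,A_1)_{\Phi}}\lesssim \|\chi_{(0,1)}K(x,\cdot)\|_{\Phi}+\|\chi_{(1,\infty)}K(x,\cdot)\|_{\Phi}$, I would apply the quasi-triangle inequality of $\Phi$ with quasi-norm constant $C_{\Phi}\geq 1$ to the identity above, yielding
\[
\|x\|_{(A_0,A_1)_{\Phi}}\leq C_{\Phi}\bigl(\|\chi_{(0,1)}K(x,\cdot,A_0,A_1)\|_{\Phi}+\|\chi_{(1,\infty)}K(x,\cdot,A_0,A_1)\|_{\Phi}\bigr).
\]

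For the reverse inequality, the plan is to invoke the lattice property of $\Phi$. Since $0\leq \chi_{(0,1)}(t)K(x,t,A_0,A_1)\leq K(x,t,A_0,A_1)$ and $0\leq \chi_{(1,\infty)}(t)K(x,t,A_0,A_1)\leq K(x,t,A_0,A_1)$ pointwise, the lattice property (with lattice constant $M$) gives that each truncated function has $\Phi$-quasi-norm at most $M\|x\|_{(A_0,A_1)_{\Phi}}$; adding the two estimates produces the required control. There is no real technical obstacle here; the only point that needs a brief justification is the harmless discard of $t=1$, which is legitimate because $\mu(\{1\})=0$, so the splitting of $K(x,\cdot,A_0,A_1)$ into its restrictions to $(0,1)$ and $(1,\infty)$ is an identity in every lattice on $\Omega$.
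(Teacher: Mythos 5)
Your proposal is correct and follows essentially the same route as the paper: the quasi-triangle inequality for the upper bound and the lattice/monotonicity property of $\Phi$ for the lower bound (the paper phrases the latter as $\|u\|_{\Phi}\ge\max$ of the two truncated norms, hence at least half their sum, which is the same estimate you obtain by adding the two lattice bounds). Your remark about discarding $t=1$ is a harmless extra precision not made explicit in the paper.
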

\begin{proof}
    By definition, 
    \begin{eqnarray*}
    \|x\|_{(A_0,A_1)_{\Phi}} &=& \|K(x,t,A_0,A_1)\|_{\Phi} \\ 
    &=& \|\chi_{(0,1)}(t)K(x,t,A_0,A_1)+\chi_{(1,\infty)}(t)K(x,t,A_0,A_1)\|_{\Phi} \\
    &\leq & M(\|\chi_{(0,1)}(t)K(x,t,A_0,A_1)\|_{\Phi}+\|\chi_{(1,\infty)}(t)K(x,t,A_0,A_1)\|_{\Phi})
    \end{eqnarray*}
    which proves one of the inequalities. To prove the other, we take into account that $\Phi$ is monotone, so that $\|u\|_{\Phi}\geq \max\{\|\chi_{(0,1)}(t)u(t)\|_{\Phi}, \|\chi_{(1,\infty)}(t)u(t)\|_{\Phi}\}$, which implies that 
    \[
    \|u\|_{\Phi}\geq \frac{1}{2}(\|\chi_{(0,1)}(t)u(t)\|_{\Phi}+\|\chi_{(1,\infty)}(t)u(t)\|_{\Phi})
    \]
    for any non-negative function $u$. The result follows applying this formula with $u(t)=K(x,t,A_0,A_1)$. 
\end{proof}
\begin{cor}
    If the couple is not ordered, then for every parameter space $\Phi$ we have that either $(A_0,A_1)_{\Phi}$ or $(A_1,A_0)_{\Phi}$ (or both) are strictly contained into $A_0+A_1$.
\end{cor}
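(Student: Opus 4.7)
The plan is to reduce the statement to a double application of Theorem \ref{strict0} (a), one per ordering of the couple. The first observation is that the assumption ``$(A_0,A_1)$ is not ordered'' means that neither $A_0 \hookrightarrow A_1$ nor $A_1 \hookrightarrow A_0$ holds, which is equivalent to the pair of statements $A_1 \neq A_0+A_1$ and $A_0 \neq A_0+A_1$ respectively. In particular, the hypothesis of Theorem \ref{strict0} (a) is met (in fact, both of its alternatives hold simultaneously).

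The second step is to connect an arbitrary parameter space $\Phi$ with the $\mathbb{Z}$-lattices $E$ for which Theorem \ref{strict0} is stated. For any parameter $\Phi$, the discrete construction $S(\Phi)$ recalled at the beginning of Section \ref{secdos} (with $q=2$) is a $\mathbb{Z}$-lattice and satisfies $(A_0,A_1)_\Phi = (A_0,A_1)_{S(\Phi):K}$ with equivalent quasi-norms. Moreover, the defining property $\min(1,t)\in \Phi$ of a parameter, combined with the lattice structure of $\Phi$, translates immediately into $\{\min(1,2^n)\}_{n\in\mathbb{Z}} \in S(\Phi)$. Thus Theorem \ref{strict0} is applicable with $E=S(\Phi)$.

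Finally, applying Theorem \ref{strict0} (a) to $(A_0,A_1)$ yields $(A_0,A_1)_\Phi \neq A_0+A_1$. Swapping the roles of $A_0$ and $A_1$ leaves the sum invariant and preserves the hypothesis, so the same theorem applied to the couple $(A_1,A_0)$ gives $(A_1,A_0)_\Phi \neq A_0+A_1$. Hence \emph{both} interpolation spaces are strictly contained in $A_0+A_1$, which is even stronger than the stated ``either or both'' conclusion. There is no genuine obstacle here: once the dictionary between parameters and admissible sequence lattices is spelled out, the corollary is a direct repackaging of Theorem \ref{strict0} (a). The only point that requires a moment's care is to check that swapping the couple does not spoil the hypothesis, which is immediate because the condition ``$A_0\neq A_0+A_1$ or $A_1\neq A_0+A_1$'' is symmetric in $A_0$ and $A_1$.
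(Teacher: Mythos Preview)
Your argument is correct but follows a different path from the paper's. The paper argues directly in the continuous $\Phi$-setting: from Theorem~\ref{fundamental} one has $K(S(A_0+A_1),t,A_0,A_1)\geq 1$ whenever $A_0+A_1\neq A_1$, so there exist $a_n\in S(A_0+A_1)$ with $K(a_n,1/n,A_0,A_1)\geq 1/2$; monotonicity of $K$ in $t$ then gives $K(a_n,t,A_0,A_1)\geq \tfrac{1}{2}\chi_{(1/n,1)}(t)$, whence $\|a_n\|_{(A_0,A_1)_{\Phi}}\geq \tfrac{1}{2}\|\chi_{(1/n,1)}\|_{\Phi}\to\infty$, contradicting equivalence of the two norms on $A_0+A_1$. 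Your route instead discretizes $\Phi$ to the sequence lattice $E=S(\Phi)$ and invokes Theorem~\ref{strict0}(a) as a black box (which itself rests on Theorems~\ref{fundamental} and~\ref{uno1}). The paper's approach is shorter and self-contained within the Appendix; yours makes transparent that the corollary is a purely formal consequence of the Section~\ref{secdos} machinery once the $\Phi\leftrightarrow E$ dictionary is set up, and it correctly extracts the sharper observation that for a non-ordered couple both $A_0\neq A_0+A_1$ and $A_1\neq A_0+A_1$ hold simultaneously, so that in fact \emph{both} $(A_0,A_1)_{\Phi}$ and $(A_1,A_0)_{\Phi}$ are strictly contained in $A_0+A_1$---a point the paper's disjunctive phrasing leaves implicit.
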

\begin{proof}
    If the couple is not ordered, then either $A_0+A_1\neq A_1$ or $A_0+A_1\neq A_0$. This implies that either $K(S(A_0+A_1),t,A_0,A_1)=1$ for all $t>0$ or $K(S(A_0+A_1),t,A_1,A_0)=1$ for all $t>0$, respectively. 
    Assume, for example, that $A_0+A_1\neq A_1$, so that $K(S(A_0+A_1),t,A_0,A_1)=1$ for all $t>0$ (the other case is similar). Then there are elements $a_n\in S(A_0+A_1)$ such that $K(a_n,\frac{1}{n},A_0,A_1)\geq \frac{1}{2}$, $n=1,2,\cdots.$ Hence $$K(a_n,t,A_0,A_1)\geq \frac{1}{2} \chi_{(1/n,1)}(t) \quad \text{ \rm{ for all }} t$$
    so that $$\|a_n\|_{(A_0,A_1)_{\Phi}}=\|K(a_n,t,A_0,A_1)\|_{\Phi} \geq \|\chi_{(1/n,1)}(t)\|_{\Phi}\to \infty$$
    which implies that $(A_0,A_1)_{\Phi}$ is strictly embedded into $A_0+A_1$. 
\end{proof}
\begin{lem}\label{lemA3}
    If $A_1\hookrightarrow A_0$ then 
    \[
\|x\|_{(A_0,A_1)_{\Phi}} \asymp \|\chi_{(0,1)}(t)K(x,t,A_0,A_1)\|_{\Phi} \text{ \rm{ for } } x\in A_0
    \]
\end{lem}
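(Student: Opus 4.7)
The plan is to combine Lemma \ref{lemA2} with Lemma \ref{lemA1} and the concavity of the $K$-functional, in order to show that, under the hypothesis $A_1\hookrightarrow A_0$, the contribution of the tail $t\in(1,\infty)$ to $\|x\|_{(A_0,A_1)_{\Phi}}$ is already controlled by the contribution of the initial interval $t\in(0,1)$. First I would invoke Lemma \ref{lemA2} to obtain
\[
\|x\|_{(A_0,A_1)_{\Phi}}\;\asymp\;\|\chi_{(0,1)}(t)K(x,t,A_0,A_1)\|_{\Phi}+\|\chi_{(1,\infty)}(t)K(x,t,A_0,A_1)\|_{\Phi}.
\]
Since the $(0,1)$ piece is trivially bounded above by the left-hand side, the claim reduces to proving
\[
\|\chi_{(1,\infty)}(t)K(x,t,A_0,A_1)\|_{\Phi}\;\lesssim\;\|\chi_{(0,1)}(t)K(x,t,A_0,A_1)\|_{\Phi}\qquad (x\in A_0),
\]
which I would establish by sandwiching both sides by $\|x\|_{A_0}$.

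For the upper bound on the tail, the trivial decomposition $x=x+0$ yields $K(x,t,A_0,A_1)\le \|x\|_{A_0}$ for every $t>0$. Since $\chi_{(1,\infty)}(t)\le \min(1,t)\in \Phi$, lattice monotonicity of the quasi-norm of $\Phi$ forces $\|\chi_{(1,\infty)}\|_{\Phi}<\infty$, and therefore $\|\chi_{(1,\infty)}(t)K(x,t,A_0,A_1)\|_{\Phi}\lesssim \|x\|_{A_0}$. For the matching lower bound on the $(0,1)$ piece, I would use the standard fact that $t\mapsto K(x,t,A_0,A_1)/t$ is non-increasing (as $K$ is concave in $t$ with $K(x,0)=0$), which implies $K(x,t,A_0,A_1)\ge tK(x,1,A_0,A_1)=t\|x\|_{A_0+A_1}$ for $0<t\le 1$. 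Restricting attention to $t\in(1/2,1)$ gives $K(x,t,A_0,A_1)\ge \tfrac12\|x\|_{A_0+A_1}$ there, and lattice monotonicity together with Lemma \ref{lemA1} then yields
\[
\|\chi_{(0,1)}(t)K(x,t,A_0,A_1)\|_{\Phi}\;\ge\;\tfrac12\|x\|_{A_0+A_1}\,\|\chi_{(1/2,1)}\|_{\Phi}\;\asymp\;\|x\|_{A_0}.
\]

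Chaining the two inequalities produces the displayed bound, and Lemma \ref{lemA2} then closes the argument. The only genuinely subtle point is verifying that the constant $\|\chi_{(1/2,1)}\|_{\Phi}$ is strictly positive; this is the main obstacle, but it is automatic because $\Phi$ is a quasi-Banach lattice of equivalence classes of measurable functions on $(\mathbb{R}^+,dt/t)$, so its quasi-norm vanishes only on the zero class, and $\chi_{(1/2,1)}$ is nonzero on a set of positive measure. (If one prefers to avoid this observation, any open subinterval of $(0,1)$ bounded away from $0$ on which $\Phi$ assigns positive measure would work equally well.)
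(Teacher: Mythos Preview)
Your proposal is correct and follows essentially the same route as the paper: invoke Lemma~\ref{lemA2}, bound the $(1,\infty)$ tail by $\|x\|_{A_0}$ via $K(x,t)\le\|x\|_{A_0}$ and $\chi_{(1,\infty)}\le\min(1,t)\in\Phi$, then bound $\|x\|_{A_0}\asymp\|x\|_{A_0+A_1}$ (Lemma~\ref{lemA1}) from above by the $(0,1)$ piece using $K(x,t)\ge t\|x\|_{A_0+A_1}$ on $(0,1)$. The only cosmetic difference is that the paper uses the constant $\|t\chi_{(0,1)}(t)\|_{\Phi}$ in this last step while you use $\|\chi_{(1/2,1)}\|_{\Phi}$; both are finite (being dominated by a multiple of $\min(1,t)$) and strictly positive for the same reason you give.
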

\begin{proof}
    We only need to prove that $\|x\|_{(A_0,A_1)_{\Phi}}\leq M\|\chi_{(0,1)}(t)K(x,t,A_0,A_1)\|_{\Phi}$ for a certain constant $M>0$ and all $x\in A_0$, since the other inequality is trivial. Now, Lemma \ref{lemA2} implies that 
    \begin{eqnarray*}
        \|x\|_{(A_0,A_1)_{\Phi}} &\leq& M(\|\chi_{(0,1)}(t)K(x,t,A_0,A_1)\|_{\Phi}+\|\chi_{(1,\infty)}(t)K(x,t,A_0,A_1)\|_{\Phi})\\
        &\leq& M(\|\chi_{(0,1)}(t)K(x,t,A_0,A_1)\|_{\Phi}+\|x\|_{A_0}\|\chi_{(1,\infty)}(t)\|_{\Phi})
    \end{eqnarray*}
    since $K(x,t,A_0,A_1)\leq \|x\|_{A_0}$ for all $t>0$. Moreover, $$\|t\cdot \chi_{(0,1)}(t)\|_{\Phi}, \|\chi_{(1,\infty)(t)}\|_{\Phi}<\infty$$ since $\min(1,t)\in \Phi$ and $\Phi$ is monotone. Finally, we can apply Lemma \ref{lemA1} to claim that $\|x\|_{A_0}\leq C\|x\|_{A_0+A_1}$. Hence the computation above continues as follows:
    \begin{eqnarray*}
        &\leq & M'(\|\chi_{(0,1)}(t)K(x,t,A_0,A_1)\|_{\Phi}+\|x\|_{A_0+A_1})
    \end{eqnarray*}
    Finally, as $\frac{K(x,t,A_0,A_1)}{t}$ is non-increasing, $K(x,t,A_0,A_1)\geq t K(x,1,A_0,A_1)=t\|x\|_{A_0+A_1}$ for all $0<t<1$. Thus
    $\|\chi_{(0,1)}(t)K(x,t,A_0,A_1)\|_{\Phi}\geq \|x\|_{A_0+A_1} \|t\cdot\chi_{(0,1)}(t)\|_{\Phi}$. In other words,
    \begin{equation}\label{recurso}
   \begin{array}{llll} \|x\|_{A_0+A_1} &\leq &  \frac{1}{\|t\cdot\chi_{(0,1)}(t)\|_{\Phi}}\|\chi_{(0,1)}(t)K(x,t,A_0,A_1)\|_{\Phi} \\ 
   & = & C\|\chi_{(0,1)}(t)K(x,t,A_0,A_1)\|_{\Phi}\end{array}
    \end{equation}
    Hence the chain of inequalities we are computing continues as:
    \begin{eqnarray*}
        \|x\|_{(A_0,A_1)_{\Phi}} &\leq& M'(\|\chi_{(0,1)}(t)K(x,t,A_0,A_1)\|_{\Phi}+C\|\chi_{(0,1)}(t)K(x,t,A_0,A_1)\|_{\Phi})\\
        &=& (M'+M'C)K(x,t,A_0,A_1)\|_{\Phi},
    \end{eqnarray*}
    which ends the proof. 
\end{proof}

For the following two theorems, we assume that the parameter spaces $\Phi_0,\Phi_1$ satisfy, for arbitrary functions $u$, the following inequalities:
\begin{equation}\label{Cond1}
    \|\chi_{(0,1)}(t)u(t)\|_{\Phi_0} \leq C_1\|\chi_{(0,1)}(t)u(t)\|_{\Phi_1}
\end{equation}
\begin{equation} \label{Cond2}
    \|\chi_{(1,\infty)}(t)u(t)\|_{\Phi_1} \leq C_2\|\chi_{(1,\infty)}(t)u(t)\|_{\Phi_0}
\end{equation}
\begin{thm}\label{InterSum1}
Assume that the parameter spaces $\Phi_0,\Phi_1$ satisfy \eqref{Cond1},\eqref{Cond2}, and
\begin{equation} \label{Cond3}
    \|\chi_{(1,\infty)}(t)u(t)\|_{\Phi_0} \asymp \|\chi_{(0,1)}(t)tu(1/t)\|_{\Phi_1}
\end{equation}
Then
    \[(A_0,A_1)_{\Phi_0}\cap (A_0,A_1)_{\Phi_1}=(A_0+A_1,A_0\cap A_1)_{\Phi_1}\]
\end{thm}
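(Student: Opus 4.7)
The plan is to prove the stated norm-equivalence directly by rewriting both sides in terms of the basic $K$-functional $K(x,t,A_0,A_1)$ split at $t=1$, and then matching the pieces using the three hypotheses on $(\Phi_0,\Phi_1)$. First I would work on the right-hand side: since $A_0\cap A_1\hookrightarrow A_0+A_1$ is an ordered couple, Lemma \ref{lemA3} applied to $(A_0+A_1,A_0\cap A_1)$ yields
\[
\|x\|_{(A_0+A_1,A_0\cap A_1)_{\Phi_1}}\asymp\|\chi_{(0,1)}(t)\,K(x,t,A_0+A_1,A_0\cap A_1)\|_{\Phi_1},
\]
and on $(0,1)$ I can replace $K(x,t,A_0+A_1,A_0\cap A_1)$ by $K(x,t,A_0,A_1)+tK(x,1/t,A_0,A_1)$ using Lemma \ref{mainlema}. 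Since the two summands are non-negative functions and $\Phi_1$ is a quasi-Banach lattice (so $\|f+g\|_{\Phi_1}\asymp\|f\|_{\Phi_1}+\|g\|_{\Phi_1}$ for $f,g\geq 0$), this gives
\[
\|x\|_{(A_0+A_1,A_0\cap A_1)_{\Phi_1}}\asymp\|\chi_{(0,1)}(t)K(x,t,A_0,A_1)\|_{\Phi_1}+\|\chi_{(0,1)}(t)\,t\,K(x,1/t,A_0,A_1)\|_{\Phi_1}.
\]
Hypothesis \eqref{Cond3}, applied with $u(t)=K(x,t,A_0,A_1)$, converts the second summand into $\|\chi_{(1,\infty)}(t)K(x,t,A_0,A_1)\|_{\Phi_0}$, so the right-hand side is equivalent to
\[
\|\chi_{(0,1)}(t)K(x,t,A_0,A_1)\|_{\Phi_1}+\|\chi_{(1,\infty)}(t)K(x,t,A_0,A_1)\|_{\Phi_0}. \tag{$\star$}
\]

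Next I would treat the left-hand side. The intersection quasi-norm satisfies $\|x\|_{(A_0,A_1)_{\Phi_0}\cap(A_0,A_1)_{\Phi_1}}\asymp\|x\|_{(A_0,A_1)_{\Phi_0}}+\|x\|_{(A_0,A_1)_{\Phi_1}}$, and applying Lemma \ref{lemA2} to each summand gives four pieces:
\[
\|\chi_{(0,1)}K(x,\cdot)\|_{\Phi_0}+\|\chi_{(1,\infty)}K(x,\cdot)\|_{\Phi_0}+\|\chi_{(0,1)}K(x,\cdot)\|_{\Phi_1}+\|\chi_{(1,\infty)}K(x,\cdot)\|_{\Phi_1}.
\]
Now conditions \eqref{Cond1} and \eqref{Cond2} are exactly what is needed to absorb two of these pieces: \eqref{Cond1} makes the first term bounded by a constant times the third, and \eqref{Cond2} makes the fourth bounded by a constant times the second. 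Hence the left-hand side is also equivalent to $(\star)$, which closes the argument and proves the two spaces coincide with equivalent quasi-norms.

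The only place I expect to need extra care is the passage from Lemma \ref{mainlema} to the split norm on the right-hand side: Lemma \ref{mainlema} only provides the equivalence for $0<t<1$, so it is important that Lemma \ref{lemA3} has already cut the integration down to $(0,1)$ before the substitution is made. Beyond that, the proof is really a matter of bookkeeping, using that lattice quasi-norms of non-negative sums behave like sums of quasi-norms (up to a constant depending only on the quasi-triangle constant of $\Phi_1$) and that the three compatibility conditions between $\Phi_0$ and $\Phi_1$ have been tailored precisely so that the four pieces on the intersection side collapse onto the two pieces in $(\star)$ coming from the sum-intersection side.
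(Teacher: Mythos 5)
Your proposal is correct and follows essentially the same route as the paper's proof: both reduce the intersection norm to the two-piece quantity $\|\chi_{(0,1)}K(x,\cdot)\|_{\Phi_1}+\|\chi_{(1,\infty)}K(x,\cdot)\|_{\Phi_0}$ via Lemma \ref{lemA2} and conditions \eqref{Cond1}--\eqref{Cond2}, and both identify this with $\|x\|_{(A_0+A_1,A_0\cap A_1)_{\Phi_1}}$ via \eqref{Cond3}, Lemma \ref{mainlema}, Lemma \ref{lemA3}, and the lattice quasi-norm additivity for non-negative functions. The only difference is presentational (you work from both ends toward the common quantity $(\star)$ rather than chaining left to right), and your remark about Lemma \ref{mainlema} only applying on $(0,1)$ is exactly the point the paper's argument also relies on.
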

\begin{proof}
    Let us set $B_0=(A_0,A_1)_{\Phi_0}$, $B_1=(A_0,A_1)_{\Phi_1}$, and $B=(A_0+A_1,A_0\cap A_1)_{\Phi_1}$. Let $x\in A_0+A_1$. Then Lemma \ref{lemA2} implies that
    \begin{eqnarray*}
    &\ & \|x\|_{B_0}+\|x\|_{B_1} \\ 
    &\asymp& \|\chi_{(0,1)}(t)K(x,t,A_0,A_1)\|_{\Phi_0}+\|\chi_{(1,\infty)}(t)K(x,t,A_0,A_1)\|_{\Phi_0} \\
        &\ & \quad + \|\chi_{(0,1)}(t)K(x,t,A_0,A_1)\|_{\Phi_1}+\|\chi_{(1,\infty)}(t)K(x,t,A_0,A_1)\|_{\Phi_1}
    \end{eqnarray*}
    Now, using \eqref{Cond1} and \eqref{Cond2} we get that 
    \begin{eqnarray*}
       &\ &   \|x\|_{B_0}+\|x\|_{B_1} \\ 
       &\asymp& \|\chi_{(0,1)}(t)K(x,t,A_0,A_1)\|_{\Phi_1}+\|\chi_{(1,\infty)}(t)K(x,t,A_0,A_1)\|_{\Phi_0} 
    \end{eqnarray*}
    Now, \eqref{Cond3} implies that
    \[
    \|\chi_{(1,\infty)}(t)K(x,t,A_0,A_1)\|_{\Phi_0}\asymp \|\chi_{(0,1)}(t)tK(x,1/t,A_0,A_1)\|_{\Phi_1}
    \]
    and, applying Lemmas \ref{mainlema} and \ref{lemA3}, as well as the fact that $\Phi_1$ is a parameter space, 
    so that, if $0\leq u(t),v(t)$ everywhere, then 
    \begin{equation}\label{arg}
\frac{\|u\|_{\Phi_1}+\|v\|_{\Phi_1}}{2} \leq \max(\|u\|_{\Phi_1},\|v\|_{\Phi_1})\leq  \|u+v\|_{\Phi_1}\leq M(\|u\|_{\Phi_1}+\|v\|_{\Phi_1}),
    \end{equation}
    and  
    we get 
    \begin{eqnarray*}
   &\ &  \|x\|_{B_0}+\|x\|_{B_1} \\ 
   &\asymp& \|\chi_{(0,1)}(t)K(x,t,A_0,A_1)\|_{\Phi_1}+\|\chi_{(0,1)}(t)tK(x,1/t,A_0,A_1)\|_{\Phi_1} \\
     &\asymp& \|\chi_{(0,1)}(t)K(x,t,A_0,A_1)+\chi_{(0,1)}(t)tK(x,1/t,A_0,A_1)\|_{\Phi_1} \\
     &\asymp& \|\chi_{(0,1)}(t)K(x,t,A_0+A_1,A_0\cap A_1)\|_{\Phi_1}\\
     &\asymp& \|x\|_{B}.
    \end{eqnarray*}
\end{proof}

\begin{thm}\label{InterSum0}
Assume that the parameter spaces $\Phi_0,\Phi_1$ satisfy \eqref{Cond1},\eqref{Cond2}, and 
\begin{equation} \label{Cond4}
    \|\chi_{(1,\infty)}(t)g(t)\|_{\Phi_1} \asymp \|\chi_{(0,1)}(t)tg(1/t)\|_{\Phi_0}
\end{equation}
Then
    \[(A_0,A_1)_{\Phi_0}+ (A_0,A_1)_{\Phi_1}=(A_0+A_1,A_0\cap A_1)_{\Phi_0}\]
\end{thm}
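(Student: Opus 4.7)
The strategy mirrors that of Theorem \ref{InterSum1}. Set $B_0 = (A_0,A_1)_{\Phi_0}$, $B_1 = (A_0,A_1)_{\Phi_1}$, and $B = (A_0+A_1,A_0\cap A_1)_{\Phi_0}$; the goal is to prove $\|x\|_{B_0+B_1}\asymp \|x\|_B$. The first step is to bring $\|x\|_B$ into a form involving only $K(x,\cdot,A_0,A_1)$ and $\|x\|_{A_0+A_1}$. Splitting via Lemma \ref{lemA2}, applying Lemma \ref{mainlema} on $(0,1)$, observing that $K(x,t,A_0+A_1,A_0\cap A_1)\asymp \|x\|_{A_0+A_1}$ for $t\geq 1$ (since $K\leq \|x\|_{A_0+A_1}$ via the decomposition $x=x+0$ and $K(x,1)\asymp \|x\|_{A_0+A_1}$ together with monotonicity), and applying \eqref{Cond4} to $g(t)=K(x,t,A_0,A_1)$ to convert the symmetric term $tK(x,1/t,A_0,A_1)$ into a $\Phi_1$-norm on $(1,\infty)$, one obtains
\[
\|x\|_B \asymp \|\chi_{(0,1)}K(x,\cdot,A_0,A_1)\|_{\Phi_0} + \|\chi_{(1,\infty)}K(x,\cdot,A_0,A_1)\|_{\Phi_1} + \|x\|_{A_0+A_1}\|\chi_{(1,\infty)}\|_{\Phi_0}.
\]

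The inclusion $B_0+B_1 \hookrightarrow B$ is the easy direction. For any decomposition $x=x_0+x_1$ with $x_i\in B_i$, subadditivity of $K(\cdot,t,A_0,A_1)$ (with the quasi-triangle constant $\max(M_0,M_1)$) lets one split each of the three summands above into contributions of $x_0$ and $x_1$: condition \eqref{Cond1} absorbs the mixed $\Phi_0$-norm of $K(x_1,\cdot)$ on $(0,1)$, condition \eqref{Cond2} absorbs the mixed $\Phi_1$-norm of $K(x_0,\cdot)$ on $(1,\infty)$, and the third summand is handled via the continuous embedding $B_i \hookrightarrow A_0+A_1$, which is immediate from the estimate \eqref{recurso}. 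Taking the infimum over all such decompositions gives $\|x\|_B \lesssim \|x\|_{B_0+B_1}$.

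The reverse inclusion $B \hookrightarrow B_0+B_1$ is the main obstacle, as it demands an explicit decomposition of an arbitrary $x\in B$. The natural route is to choose, for every $n\in\mathbb{Z}$, a near-optimal $K$-decomposition $x=a_n+b_n$ of $x$ in $(A_0,A_1)$ at $t=2^n$ (so $\|a_n\|_{A_0}+2^n\|b_n\|_{A_1}\leq 2K(x,2^n,A_0,A_1)$), and to exploit the telescoping relations $a_n-a_{n+1}=b_{n+1}-b_n\in A_0\cap A_1$ to build an ``$A_0$-dominant'' part $x_0$ and an ``$A_1$-dominant'' part $x_1$ summing to $x$. After an Aoki--Rolewicz $p$-renorming of $A_0,A_1$ guaranteeing absolute convergence of the relevant series, one verifies the K-functional estimates
\[
K(x_0,t,A_0,A_1) \lesssim \min\{K(x,t,A_0,A_1),\,\|x\|_{A_0+A_1}\},
\]
\[
K(x_1,t,A_0,A_1) \lesssim \min\{K(x,t,A_0,A_1),\,t\,\|x\|_{A_0+A_1}\},
\]
which, combined with the three-term expression for $\|x\|_B$ and conditions \eqref{Cond1}, \eqref{Cond2}, \eqref{Cond4}, yield $\|x_0\|_{B_0}+\|x_1\|_{B_1}\lesssim\|x\|_B$. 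The technical heart lies in tracking the convergence of the telescoping series in the quasi-Banach setting using the quasi-triangle constants of $A_0,A_1$, following the template of \cite{AA} with the small adaptations already illustrated in the lemmas above.
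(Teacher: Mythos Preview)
Your treatment of the inclusion $B_0+B_1\hookrightarrow B$ matches the paper's argument (the extra summand $\|x\|_{A_0+A_1}\|\chi_{(1,\infty)}\|_{\Phi_0}$ is harmless and is absorbed in the paper via Lemma~\ref{lemA3} applied to the ordered couple $(A_0+A_1,A_0\cap A_1)$).

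For the reverse inclusion $B\hookrightarrow B_0+B_1$ you take a genuinely different and much heavier route. You propose an infinite family of near-optimal decompositions $x=a_n+b_n$ at all dyadic levels $t=2^n$, a telescoping construction of pieces in $A_0\cap A_1$, an Aoki--Rolewicz renorming to force convergence, and then refined $K$-functional bounds on the resulting $x_0,x_1$. The paper, following \cite{AA}, does something far simpler: it takes a \emph{single} near-optimal decomposition $x=a_0+a_1$ at $t=1$, with $a_i\in A_i$ and $\|a_0\|_{A_0}+\|a_1\|_{A_1}\le 2\|x\|_{A_0+A_1}$, and shows directly that $a_0\in B_0$ and $a_1\in B_1$. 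The estimate for $\|a_0\|_{B_0}$ uses only (i) $K(a_0,t,A_0,A_1)\le\|a_0\|_{A_0}$ on $(1,\infty)$, (ii) $a_0=x-a_1$ together with $K(a_1,t,A_0,A_1)\le t\|a_1\|_{A_1}$ on $(0,1)$, and (iii) inequality \eqref{recurso} to absorb the resulting $\|x\|_{A_0+A_1}$ term. A symmetric argument handles $a_1$, and \eqref{Cond4} converts the final expression back into $\|x\|_B$.

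Your telescoping strategy is a recognizable interpolation-theoretic device and could presumably be pushed through, but the outline leaves the actual definition of $x_0,x_1$ and the verification of the claimed $K$-functional bounds unspecified; in the quasi-Banach setting the series convergence is a real additional burden. Note also that \cite{AA} does \emph{not} use a telescoping construction here --- it uses exactly the single $t=1$ decomposition that the paper reproduces --- so your appeal to ``the template of \cite{AA}'' for this step is misplaced. The single-decomposition argument is both shorter and avoids all convergence issues.
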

\begin{proof}
    Let us set $B_0=(A_0,A_1)_{\Phi_0}$, $B_1=(A_0,A_1)_{\Phi_1}$, and $B=(A_0+A_1,A_0\cap A_1)_{\Phi_0}$. Let $x\in B_0+B_1$, $x=x_0+x_1$, $x_i\in B_i$, $i=0,1$. Then Lemma \ref{mainlema}, in combination with the argument given in \eqref{arg}, which holds for every parameter space, gives
    \[
    \|x\|_{B}\asymp \|\chi_{(0,1)}(t)K(x,t,A_0,A_1)\|_{\Phi_0}+\|\chi_{(0,1)}(t)tK(x,1/t,A_0,A_1)\|_{\Phi_0}
    \]
    Now we use \eqref{Cond4} to get
     \begin{eqnarray*}
         \|x\|_{B} &\asymp&  \|\chi_{(0,1)}(t)K(x,t,A_0,A_1)\|_{\Phi_0}+\|\chi_{(1,\infty)}(t)K(x,t,A_0,A_1)\|_{\Phi_1} \\
         &\lesssim & \|\chi_{(0,1)}(t)K(x_0,t,A_0,A_1)\|_{\Phi_0}+\|\chi_{(1,\infty)}(t)K(x_0,t,A_0,A_1)\|_{\Phi_1}\\
         &\ & \quad +\|\chi_{(0,1)}(t)K(x_1,t,A_0,A_1)\|_{\Phi_0}+\|\chi_{(1,\infty)}(t)K(x_1,t,A_0,A_1)\|_{\Phi_1} 
     \end{eqnarray*}
and, by \eqref{Cond1} and \eqref{Cond2}, we get 
\begin{eqnarray*}
    \|x\|_{B} &\lesssim& \|\chi_{(0,1)}(t)K(x_0,t,A_0,A_1)\|_{\Phi_0}+\|\chi_{(1,\infty)}(t)K(x_0,t,A_0,A_1)\|_{\Phi_0}\\
         &\ & \quad +\|\chi_{(0,1)}(t)K(x_1,t,A_0,A_1)\|_{\Phi_1}+\|\chi_{(1,\infty)}(t)K(x_1,t,A_0,A_1)\|_{\Phi_1} \\ 
         &\asymp& \|x_0\|_{B_0}+\|x_1\|_{B_1}
\end{eqnarray*}
    It follows that $\|x\|_{B}\lesssim \|x\|_{B_0+B_1}$ since the inequality above holds true for every decomposition $x=x_0+x_1$. 

    Let us now demonstrate the inequality $\|x\|_{B_0+B_1}\lesssim \|x\|_{B}$. Let $x\in B$ and take $x=a_0+a_1$, with $a_i\in A_i$, $i=0,1$, such that $\|a_0\|_{A_0}+\|a_1\|_{A_1}\leq 2\|x\|_{A_0+A_1}$. Then
    \begin{eqnarray*}
        \|a_0\|_{B_0} &\asymp& \|\chi_{(0,1)}(t)K(a_0,t,A_0,A_1)\|_{\Phi_0}+\|\chi_{(1,\infty)}(t)K(a_0,t,A_0,A_1)\|_{\Phi_0} \\
        &\lesssim& \|\chi_{(0,1)}(t)K(a_0,t,A_0,A_1)\|_{\Phi_0}+\|\chi_{(1,\infty)}(t)\|_{\Phi_0}\|a_0\|_{A_0} \\
        &\ &\quad \quad \quad \text{ \rm{(since } } K(a_0,t,A_0,A_1)\leq \|a_0\|_{A_0} \text{ \rm{)}} \\
        &\asymp& \|\chi_{(0,1)}(t)K(a_0,t,A_0,A_1)\|_{\Phi_0}+\|a_0\|_{A_0}.
    \end{eqnarray*}
    Now, $a_0=x-a_1$ implies that 
    \begin{eqnarray*}
       &\ &  \|a_0\|_{B_0} \\
       &\lesssim&  \|\chi_{(0,1)}(t)K(x,t,A_0,A_1)\|_{\Phi_0}+\|\chi_{(0,1)}(t)K(a_1,t,A_0,A_1)\|_{\Phi_0}+\|a_0\|_{A_0} \\
        &\lesssim&  \|\chi_{(0,1)}(t)K(x,t,A_0,A_1)\|_{\Phi_0}+\|\chi_{(0,1)}(t)t\|_{\Phi_0}\|a_1\|_{A_1}+\|a_0\|_{A_0} \\
        &\asymp&  \|\chi_{(0,1)}(t)K(x,t,A_0,A_1)\|_{\Phi_0}+\|a_1\|_{A_1}+\|a_0\|_{A_0}\\
        &\leq&  \|\chi_{(0,1)}(t)K(x,t,A_0,A_1)\|_{\Phi_0}+2\|x\|_{A_0+A_1}\\
        &\lesssim&  \|\chi_{(0,1)}(t)K(x,t,A_0,A_1)\|_{\Phi_0} \text{ \rm{ (by }} \eqref{recurso} \text{\rm{)}}
    \end{eqnarray*}
    Thus 
    \[
    \|a_0\|_{B_0} \lesssim \|\chi_{(0,1)}(t)K(x,t,A_0,A_1)\|_{\Phi_0}, 
    \]
    and a similar argument gives:
    \[
    \|a_1\|_{B_1} \lesssim \|\chi_{(1,\infty)}(t)K(x,A_0,A_1)\|_{\Phi_1}
    \]
    Using \eqref{Cond4}, we get 
    \[
    \|a_1\|_{B_1} \lesssim \|\chi_{(1,\infty)}(t)tK(x,1/t,A_0,A_1)\|_{\Phi_0}
    \]
     Hence
     \begin{eqnarray*}
      &\ &   \|a_0\|_{B_0}+\|a_1\|_{B_1} \\
      &\lesssim & \|\chi_{(0,1)}(t)K(x,t,A_0,A_1)\|_{\Phi_0}+\|\chi_{(1,\infty)}(t)tK(x,1/t,A_0,A_1)\|_{\Phi_0} \\
         &\asymp & \|\chi_{(0,1)}(t)K(x,t,A_0,A_1)+\chi_{(1,\infty)}(t)tK(x,1/t,A_0,A_1)\|_{\Phi_0}  \text{ \rm{ (by }} \eqref{arg} \text{\rm{)}}\\
         &\asymp& \|x\|_{B} \text{ \rm{ (by Lemma}} \eqref{mainlema} \text{\rm{)}}
     \end{eqnarray*}
     Hence $\|x\|_{B_0+B_1}\leq \|x\|_B$. This ends the proof.     
\end{proof}

\begin{thm} \label{sumainterseccion}
    Let $(A_0,A_1)$ be a quasi-Banach couple and let  $(\theta,p)\in ]0,1[\times ]0,\infty[$. Then 
\begin{equation}\label{sumintersection1}
    (A_0+A_1,A_0\cap A_1)_{\theta,p}=\left\{\begin{array}{cccc}
   (A_0,A_1)_{\theta,p}+(A_0,A_1)_{1-\theta,p} & 0\leq \theta \leq 1/2 \\
     (A_0,A_1)_{\theta,p}\cap (A_0,A_1)_{1-\theta,p} & 1/2\leq \theta \leq 1 
\end{array}\right.
\end{equation}
\end{thm}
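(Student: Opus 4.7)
The plan is to deduce the claimed identity directly from Theorem \ref{InterSum0} and Theorem \ref{InterSum1}, applied to the parameter spaces that represent the classical real method, namely $\Phi_{\theta,p}:=L_p(t^{-\theta})(\Omega)$ on $\Omega=(\mathbb{R}^+,\tfrac{dt}{t})$, for which $(A_0,A_1)_{\theta,p}=(A_0,A_1)_{\Phi_{\theta,p}}$. Note that $\min(1,t)\in\Phi_{\theta,p}$ for every $\theta\in{]0,1[}$, so $\Phi_{\theta,p}$ and $\Phi_{1-\theta,p}$ are genuine parameter spaces in the sense of the appendix. The argument then splits according to the position of $\theta$ relative to $1/2$, the only choice being which of $\Phi_{\theta,p},\Phi_{1-\theta,p}$ plays the role of $\Phi_0$ and which plays $\Phi_1$.

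For $0<\theta\leq 1/2$, I set $\Phi_0:=\Phi_{\theta,p}$ and $\Phi_1:=\Phi_{1-\theta,p}$ and apply Theorem \ref{InterSum0}. Conditions \eqref{Cond1} and \eqref{Cond2} reduce to the pointwise weight inequalities $t^{-\theta}\leq t^{-(1-\theta)}$ on $(0,1)$ and the reverse on $(1,\infty)$, both immediate from $\theta\leq 1-\theta$. Condition \eqref{Cond4} becomes an actual equality after the change of variable $s=1/t$, under which $dt/t$ is invariant:
\[
\int_1^{\infty}\bigl(t^{-(1-\theta)}g(t)\bigr)^p\frac{dt}{t}=\int_0^1\bigl(s^{-\theta}\,s\,g(1/s)\bigr)^p\frac{ds}{s}.
\]
Theorem \ref{InterSum0} then yields $(A_0,A_1)_{\theta,p}+(A_0,A_1)_{1-\theta,p}=(A_0+A_1,A_0\cap A_1)_{\theta,p}$, the first branch of \eqref{sumintersection1}.

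For $1/2\leq\theta<1$ the argument is symmetric: I swap the labels, setting $\Phi_0:=\Phi_{1-\theta,p}$ and $\Phi_1:=\Phi_{\theta,p}$, and apply Theorem \ref{InterSum1}. The weight inequalities \eqref{Cond1} and \eqref{Cond2} now hold because $1-\theta\leq\theta$, and condition \eqref{Cond3} is again an equality via the substitution $s=1/t$. This gives $(A_0,A_1)_{\theta,p}\cap(A_0,A_1)_{1-\theta,p}=(A_0+A_1,A_0\cap A_1)_{\theta,p}$, the second branch of \eqref{sumintersection1}. The two formulas overlap at $\theta=1/2$, where $\Phi_0=\Phi_1$ and both theorems apply trivially, so no separate boundary case is needed.

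The main obstacle, as I see it, is purely administrative: Theorems \ref{InterSum0}--\ref{InterSum1} impose \emph{asymmetric} hypotheses on $\Phi_0$ versus $\Phi_1$, which dictate which parameter must be ``smaller'' on $(0,1)$ and which on $(1,\infty)$. Matching the labelling to the sign of $\theta-1/2$ is the only place where one could go wrong, since a swap would turn \eqref{Cond1}--\eqref{Cond2} into meaningless inequalities. Once the assignment is fixed correctly, the verifications are routine manipulations of weighted $L_p$ quasi-norms and of the invariance of $dt/t$ under $t\mapsto 1/t$; the quasi-Banach nature of $(A_0,A_1)$ plays no special role here, as Theorems \ref{InterSum0} and \ref{InterSum1} were already stated in that generality.
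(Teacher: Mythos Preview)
Your proof is correct and follows exactly the same approach as the paper: deduce the formula from Theorems~\ref{InterSum0} and~\ref{InterSum1} applied to the parameter spaces $\Phi_{\theta,p}$ and $\Phi_{1-\theta,p}$, checking the weight inequalities \eqref{Cond1}--\eqref{Cond4} via $t\mapsto 1/t$. In fact you are more careful than the paper about the labelling of $\Phi_0$ versus $\Phi_1$ in the case $1/2\le\theta<1$: the paper keeps $\Phi_0=\Phi(\theta)$, $\Phi_1=\Phi(1-\theta)$ throughout, whereas your swap is what actually makes \eqref{Cond1}--\eqref{Cond2} hold and makes the conclusion of Theorem~\ref{InterSum1} land on $(A_0+A_1,A_0\cap A_1)_{\theta,p}$ rather than $(A_0+A_1,A_0\cap A_1)_{1-\theta,p}$.
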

\begin{proof} This is an easy corollary to Theorems \ref{InterSum1} and \ref{InterSum0}. Indeed, if we set $$\Phi_0=\Phi(\theta)=\{u: \|u\|_{\Phi(\theta)}=\left(\int_0^{\infty}\frac{|u(t)|^p}{t^{\theta}}\frac{dt}{t}\right)^{1/p}<\infty\}$$ and  $\Phi_1=\Phi(1-\theta)$, then  $\Phi_0,\Phi_1$ satisfy the hypotheses of Theorem \ref{InterSum0} for $0<\theta<1/2$, $0<p<\infty$ and the hypotheses of Theorem \ref{InterSum1} for $1/2\leq \theta <1$, $0<p<\infty$. 
\end{proof}

\section{Acknowledgement}
We want to express our thanks to P. Nilsson who read the manuscript and made several interesting suggestions. We also want to express our warmest thanks to an anonymous referee who detected an error in the proof of one theorem of the first version of the paper. With his/her help, the paper has improved its quality and its readability.

\end{document}